%
%
%
%
%
\RequirePackage{fix-cm}
\documentclass[smallextended]{svjour3}       
\smartqed  

\usepackage{graphicx}
\usepackage{mathptmx}      
\usepackage{amssymb,amsmath,amsfonts,enumerate,latexsym,mathrsfs}
\usepackage{latexsym}
\makeatletter
\def\@thmcontersep{.}
\makeatother

%
%

\usepackage{color}

\raggedbottom
\begin{document}

\title{Theory of generating spaces of convex sets and their applications to solvability of convex programs in Banach spaces}

\titlerunning{Theory of generating spaces and applications}        

\author{Lixin Cheng        \and
        Weihao Mao 
}


\institute{Department of Mathematics, Xiamen University \at
              Siming, Xiamen, 361005, Fujian, China \\
              \email{lxcheng@xmu.edu.cn}           
           \and
           Department of Mathematics, Xiamen University\at
             Siming, Xiamen, 361005, Fujian, China \\
             \email{mwhaea123456@163.com}
       }

\date{Received: date / Accepted: date}

\maketitle

\begin{abstract}
When optimization theorists consider optimization problems in infinite dimensional spaces, they need to deal with closed convex subsets(usually cones) which mostly have empty interior. These subsets often prevent optimization theorists from applying powerful techniques to study these optimization problems. In this paper, by nonsupport point, we present generating spaces which are relative to a Banach space and a nonsupport point of its convex closed subset. Then for optimization problems in infinite dimensional spaces, in some general cases, we replace original spaces by generating spaces while containing solutions. Thus this method enable us to apply powerful classical techniques to optimization problems in very general class of infinite dimensional spaces.
Based on functional analysis, from classical Banach spaces to separable Banach spaces, from Banach lattice to latticization, we give characterizations of generating spaces and conclude that they are actually linearly isometric to $L_\infty$($\ell _\infty$) or their closed subspaces. Thus continuous linear functional involved in these techniques could be chosen from $L_\infty^*$($\ell_\infty^*$). After that, applications in Penalty principle, Lagrange duality and scalarization function are further studied by this method.
\keywords{generating space\and nonsupport point \and isometric \and optimization \and infinite dimensional \and latticization}
\subclass{46B04\and 46E30\and 46N10\and 49K27\and 90C46 \and 90C48}
\end{abstract}

\section{Introduction}\label{sec1}
In infinite dimensional Banach spaces, we always deal with cones. Solidness assumption of these target cones is essential in many cases, including duality theory, variational analysis and so on. A convex subset of a Banach space $X$ is said to be solid if it has nonempty interior. A challenge in these programs is that these cones are not solid in most cases. For example, if the space in question is an infinite dimensional  $L_p(\Omega,\mu)$, or, $\ell_p$  ($1\leq p< \infty$), then its natural positive cone  $L_p^+(\Omega,\mu)$,  or, $\ell_p^+$, has  empty interior.
Indeed, every Banach space admitting a reproducing cone with nonempty interior is ``close" to a $C(K)$-space (see, Theorem \ref{8.5}). This means that if $X$ is a Banach space admitting a reproducing cone with nonempty interior, then
there is a Banach space $C(K)$ for some compact Hausdorff space $K$ such that $X$ is isomorphic to a subspace $E$ of $C(K)$ satisfying that the ``$C(K)$-lattice hull" of $E$ is dense in $C(K)$. In other words, the class of $C(K)$-spaces is almost the only class of Banach spaces  admitting a reproducing cone with nonempty interior. Therefore, it has become a significantly  important work to find an appropriate substitute for the interior of a reproducing cone in an infinite dimensional Banach space.\\

The Bishop-Phelps theorem states that for a nonempty closed convex set $C$ in a Banach space $X$, support functionals of $C$ are dense in the cone $C^*\subset X^*$ consisting of all functionals which are bounded above on $C$; and support points of $C$ are always dense in the boundary of $C$. (See, for instance, \cite{ph}.) We denote by $C_S$, the set of all support points of $C$, and by $C_N=C\setminus C_S$, the set of all non-support points of $C$. Then by the separation theorem of convex sets it is easy to see that $C_N={\rm int}C$ (the interior of $C$) if the latter is nonempty. In 1974, R.R. Phelps \cite{72Phelps} further studied topological properties of  $C_S$ of $C$, and obtained that $C_S$ is always a $F_\sigma$ set of $C$, and its nonsupport point set $C_N$   is a dense $G_\delta$-subset of $C$ whenever $C_N\neq\emptyset$. Taking $C_N$ as a starting point, many mathematicians used this notion to study  differentiability of real-valued convex, or locally Lipschiz functions defined on  closed convex sets $C$ with int$C=\emptyset$ but $C_N\neq\emptyset$ and call such a set "small set" \cite{88Verona,re,94Wu}, to show some "embedding" theorems \cite{CC,09Cheng,CZh},  to study ``lattice" properties of induced by a cone $C$ with  $C_N\neq\emptyset$ \cite{99Schaefer},  and to characterize a cone in a Banach space with a base \cite{16Cheng,75Homles}. Coincidentally, \cite{03Borwein,91Limber,93Limber} applied nonsupport point subset of a convex set (but they call it ``quasi interior") to solvability discussion of convex program, vector optimization and to  convex duality theory. For more information in this direction, we refer the reader to \cite{15Zalinescu,05Cammaroto,08Bot,12Bot,14Grad,21Cuong} and references therein.\\

From the facts mentioned above, for a closed convex set $C$ it seems that  such a substitute (``$C_N$" for the interior int$C$)  has  already exist. Therefore, the question that how to solve the following new problems has moved toward into the central stage.

\begin{problem}\label{1.1}
Assume that $C$ is a closed convex subset  of a Banach space $(X,\|\cdot\|)$ with empty interior but $C_N\neq\emptyset$.

 I. How to produce a new Banach space $X_C$ so that

 i) $X_C$ is algebraically contained in $X$;

 ii)  $C_0\equiv X_C\bigcap C$  is dense in $C$ with respect to the norm  of $X$; and

 iii)  ${\rm int}_{X_C}C_0$ (the interior of $C_0$) is nonempty?

 II. How to represent the new space $X_C$?
\end{problem}

If such a space $X_C$ exists, then we call it a ``\emph{$C$-generating space}", or simply, a ``\emph{generating space}". \\

By a \emph{cone} $C$ of a Banach space $X$ we mean that it is a convex set satisfying $C+C\subset C$. A cone $C$ is said to be  \emph{reproducing (resp., almost reproducing)} if $C-C=X$ (resp., $\overline{C-C}=X$, where $\overline{A}$ denotes the norm closure of $A\subset X$). An ordering cone $C$ of $X$ is a reproducing or an almost reproducing cone containing no nontrivial subspaces of $X$, or equivalently, satisfying $C\bigcap(-C)=\{0\}$. \\

This paper is organized as follows. In the first part of this paper (Sections 2-8), we are devoted to solving  I of Problem \ref{1.1}. As a result, we show that

\emph{(1) for every  nonempty closed convex $C$ of a separable Banach space $X$, the generating spaces $X_C$ always exist (Theorem \ref{3.6}), and they are not a unique in general;}

\emph{(2) if $C$ is a closed reproducing cone of a separable Banach space $X$, then two $C$-generating spaces $X_{C,p}$ and $X_{C,q}$ are isomorphic if and only if the two vectors $p,q$ are equivalent with respect to the order induced by the ``positive cone" $C$ (Theorem \ref{3.9});}

\emph{ (3) if  $X$ has an unconditional basis, in particular, $X=\ell_p$ ($1\leq p<\infty$), or, a separable $L_p$ ($1<p<\infty$), and $C$ is the positive cone with respect to the basis, then every generating space $X_C$ is isometric to $\ell_\infty$ (Theorem \ref{5.1});}

 \emph{(4) if $X$ is a separable Banach lattice and $C$ is the positive cone, then  every generating space $X_C$ is isometric to $L_\infty$.}\\

As applications of the results mentioned above, in the second part of this paper (Sections 9-11), we consider solvability of the following problems.\par
\begin{problem}\label{1.2}
Let $X,Y$ be Banach spaces, $Y$ be  ordered by an ordering cone $C\subset Y$ and   $f:X\rightarrow Y$ be a function. Assume that $\Omega\subset X$ is  a nonempty subset. Consider solvability of the following program.
\begin{equation}\label{vector model}
\begin{split}
\min\{f(x):\;x\in \Omega\}.
\end{split}
\end{equation}
A vector $\bar{x}\in \Omega$ is said to be a minimum point of the program (\ref{vector model}) provided
\begin{equation}\label{1.3}(f(\Omega)-f(\bar x))\bigcap (-C\setminus\{0\})=\emptyset.\end{equation}
\end{problem}
If such a vector $\bar x$ exists, then it is said to be a classical solution of the program (\ref{vector model}).  In order to obtain a classical solution of (\ref{vector model}), one usually assume that the ordering cone $C$ has nonempty interior, that is, int$C\neq\emptyset$. However, for an infinite dimensional Banach space $Y$, it is often impossible to claim that the assumption is true unless that $Y$ is a $C(K)$-like space. On the other hand, the assumption int$C\neq\emptyset$ is very important.  For example, when int$C=\emptyset$, any scalarization function of the program (\ref{vector model}) will deny continuity, while just the continuity guarantees that the Moreau-Rockafellar Theorem, i.e., Lemma \ref{Moreau-Rockafellar theorem} can be effectively  used to solvability discussion of the program (\ref{vector model}). These facts have forced optimization theorists to consider a kind of ``weak solution" $\bar{x}\in \Omega$ of the program (\ref{vector model}), namely, \emph{generalized approximate solutions}. That is, they substitute a subset $A$ of $Y$ containing the ordering cone $C$ but with int$A\neq \emptyset$ for $C$ so that
 \begin{equation}\label{1.3}(f(\Omega)-f(\bar{x}))\bigcap (-A\setminus\{0\})=\emptyset.\end{equation}
See, for example, \cite{10Durea,21Amahroq,10Truong,10Kasimbeyli}.
Nevertheless,  a solution $\bar x$ of (\ref{1.3}) is often not the classical one which optimization theorists are  most concerned.\par
\emph{In Section 9, we will apply results established in Sections 2-4 to generalize \emph{the exact penalty principle} due to J.J. Ye \cite{12Ye} to all separable Banach spaces. In Section 11,
  with the help of the results presented in Sections 2-8 and 10,  we will overcome the discontinuity difficulty of scalarization functions of the program (\ref{vector model}) in \emph{box constraints}   without any additional assumptions.}

\begin{problem}[Lagrange model with box constraint]\label{problem 1.4}
\begin{equation}\label{lagrange model}
\begin{split}
& \min\quad f(x)\\
& s. t. \quad g(x)\in -Y^+,\\
& \quad \quad\; h(x)=0_Z,\\
& \quad \quad\; x\in \Omega=\{x\in X:\;x_a\leq x\leq x_b\},\\
\end{split}
\end{equation}
\end{problem}
\noindent
where $X,Y,Z$ are Banach spaces and $X,Y$ are ordered by their corresponding ordering cones $X^+$, $Y^+$, and $f: X\rightarrow \mathbb{R}$ is a continuous convex function, $g:X\rightarrow Y$ is continuous and convex-like with respect to $Y^+$ and $h:X\rightarrow Z$ is a continuous affine  function. $\Omega=\{x\in X:\;x_a\leq x\leq x_b\}$ is called ``box constraint". We should mention that box constraint is a very common condition in applications, and that the program (\ref{lagrange model}) is a very general model which  contains many concrete models as its special cases. See, for instance, \cite{18Causa,07Daniele,11Donato,16Donato,10Durea,14Maugeri,06Rosch} and references therein.\par

In applications, one often considers the following Lagrange duality program.
\begin{problem}[Lagrange duality model]\label{problem 1.5}
\begin{equation}\label{duality program of constraint scalar program}
\begin{split}
&\max\limits_{(y^*,z^*,x_1^*,x_2^*)}\inf\limits_{x\in X} f(x)+\left<(y^*,x_1^*,x_2^*),\;(g(x),x-x_a,x_b-x)\right>+\left<z^*,\;h(x)\right>\\
&s.t.\quad x\in X,\;y^*\in Y^{*+},\;x_1^*\in X^{*+},\;x_2^*\in X^{*+},\;z^*\in Z^*.
\end{split}
\end{equation}
If extremal values of the programs (\ref{lagrange model}) and  (\ref{duality program of constraint scalar program}) are equal, then we say the duality between the two programs holds.
\end{problem}
Slater's condition is an important tool in obtaining the duality between  (\ref{lagrange model}) and  (\ref{duality program of constraint scalar program}). However, the duality may not hold in the case that int$(Y^+)=\emptyset$, because Slater's condition requires int$(Y^+)\neq\emptyset$.  It is worth to mention that, Daniele, Giuffr\`{e}, and Idone \cite{07Daniele} in their remarkable work  gave a condition called \emph{assumption (S)} which assure the Lagrange duality by substituting the nonsupport point set $(Y^+)_N$ of $Y^+$ for the interior int$(Y^+)$. Taking that as a starting point, a number of  mathematicians presented some reasonable  conditions and assumptions in various optimization problems (see, for instance, \cite{11Donato,16Donato,13Bazan,14Maugeri,14Maugeri2} and  references theirin).  It is worth to mention that in a few special cases, some applications are successful. (See, for example, Donato \cite{16Donato}).  Nevertheless,  the mentioned conditions and assumptions are usually hard to verify in practice, although some of these works contain great theoretical elegance \cite{08Bot,15Zalinescu,16Martin,15Zalinescu}.\par

\emph{To overcome the difficulty that ${\rm int}(Y^+)=\emptyset$ in Slater's condition and Lagrange duality theory, in Section 10, we will use the results presented in Sections 2-8 to transform program (\ref{lagrange model}) with box constraints to another equivalent program which Slater's condition can be applied directly. In Section 11, we consider the convex vector  optimization program (\ref{vector model}). As a result, we present some subdifferential inclusion theorems of Gerstewitz scalarization functions (Theorems \ref{new fermat rule} and \ref{subdifferential of composition});  and as their application, solvability of the vector variational inequality problem (\ref{variational inequality}) is discussed.  } \\
\;

In this paper, unless stated explicitly otherwise, we always assume that $X$ is a Banach space, and $X^*$ its dual. For a subset $A\subset X$, we denote successively, by $\overline{A}$, ${\rm co}A$ and $\overline{\rm co}A$, the (norm) closure, the convex hull and the closed convex hull of $A$.\par

\section{Preliminaries}
In this section, we will recall some concepts and basic properties related to the subset of  nonsupport points of a closed convex set in a Banach space $X$.
\begin{definition}\label{2.1} Let $C$ be a closed convex set of a Banach space $X$.

i) A point $x\in C$ is said to be a support point of $C$ provided there exists a non-zero functional $x^*\in X^*$ such that
\[\langle x^*,\;x\rangle=\max\{\langle x^*,\;y\rangle:\;y\in C\}.\]
In this case, $x^*$ is called a support functional of $C$, which is supporting $C$ at $x$.
We denote by $C_S$ the set of all support points of $C$.\par

ii) A point of the complement $C_N\equiv C\setminus C_S$  is called a non-support point of $C$, and $C_N$ is said to be the quasi-interior of $C$.\par

iii) We say that a  point $x$ of $C$ is a proper support point if there is a functional $x^*\in X^*$ such that
 \[\max\{\langle x^*,\;y\rangle:\;y\in C\}=\langle x^*,\;x\rangle>\inf\{\langle x^*,\;y\rangle:\;y\in C\}.\]
 We denote by $C_{PS}$ the set of all proper support points of $C$.\par

iv) A  point $x$ of $C$ is said to be a non-proper support point if it is not a proper support point of $C$. We use $C_{NP}$ to denote the set of all non-proper support points of $C$. Clearly, \[C_{NP}=C\setminus{C_{PS}}.\]

\end{definition}
\begin{definition}\label{2.1'} Let $C$ be a  convex set of a Banach space $X$.

i) $C$ is said to be a cone with vertex at the origin $0$ provided it is closed under the operations of addition and positive scalar multiplication, i.e.,
\[x,y\in C,\;\lambda\geq0\;\;{\rm imply\;} x+y\in C\;\;{\rm and\;}\;\lambda x\in C.\]\par

ii) We say that $C$ is a cone with vertex at  $x_0\in X$ if $C-x_0$ is a cone with vertex at the origin $0$.
When the vertex is origin $0$, we also say $C$ is pointed. \par

iii) A cone $C$ is called a reproducing  (resp., an almost reproducing) cone  if $C-C=X$ (resp., $\overline{C-C}=X$).

\end{definition}
The next property is easy to observe.
\begin{proposition}
Let $C$ be a closed cone of a Banach space $X$ with $C_N\neq\emptyset$. Then $C$ is almost reproducing. But the converse version is not true. (See, Examples \ref{2.4} and \ref{2.5}.)
\end{proposition}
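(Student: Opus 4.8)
The plan is to reduce the claim to a single Hahn--Banach separation argument. First I would record the structural observation that makes everything work: since $C$ is a cone with vertex at the origin, the difference set $C-C$ is not merely a cone but a genuine \emph{linear subspace} of $X$. It is plainly closed under addition, and for a scalar $\lambda$ one has $\lambda(a-b)=(\lambda a)-(\lambda b)\in C-C$ when $\lambda\geq 0$, while $\lambda(a-b)=(-\lambda b)-(-\lambda a)\in C-C$ when $\lambda<0$. Hence $M:=\overline{C-C}$ is a closed linear subspace, and the assertion that $C$ is almost reproducing is exactly the statement $M=X$.

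Next I would argue by contradiction. Suppose $M\neq X$; then $M$ is a \emph{proper} closed subspace, so Hahn--Banach furnishes a nonzero functional $x^*\in X^*$ that vanishes identically on $M$. Because $C_N\neq\emptyset$ the cone $C$ is nonempty, and as a cone with vertex at the origin it contains $0$; therefore $C\subseteq C-C\subseteq M$, which gives $\langle x^*,c\rangle=0$ for every $c\in C$. The contradiction is then immediate: choosing any $x_0\in C_N$ (available by hypothesis), the fact that $x^*$ vanishes on all of $C$ yields $\langle x^*,x_0\rangle=0=\max\{\langle x^*,y\rangle:y\in C\}$, so the nonzero functional $x^*$ supports $C$ at $x_0$. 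This forces $x_0\in C_S$, contradicting $x_0\in C_N=C\setminus C_S$. Consequently $M=X$, and $C$ is almost reproducing.

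For the second half of the statement, the failure of the converse is not something to be proved but something to be witnessed: one must exhibit an almost reproducing cone $C$ (so $\overline{C-C}=X$) for which nevertheless $C_N=\emptyset$, i.e.\ every point of $C$ is a support point. Since the proposition explicitly points to Examples \ref{2.4} and \ref{2.5}, I would simply defer to those constructions rather than attempt a reverse implication.

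I do not anticipate a genuine obstacle here. The only point deserving care is the verification that $C-C$ is a subspace rather than merely a cone, since it is precisely this that allows a single separating functional to annihilate all of $C$ at once; once that is in hand, the observation that a functional vanishing on $C$ attains its (zero) maximum at every point converts ``non-support'' directly into the desired contradiction.
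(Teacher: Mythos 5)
Your argument is correct. The paper itself offers no proof --- the proposition is introduced with ``The next property is easy to observe'' --- so there is nothing to match line by line; the observation the authors most likely have in mind is the one-liner via Lemma \ref{2.3} iv): if $x_0\in C_N$ then $C_{x_0}=\bigcup_{\lambda>0}\lambda(C-x_0)$ is dense in $X$, and $C_{x_0}\subset C-C$ because $C$ is a cone, whence $\overline{C-C}=X$. Your route is the dual formulation of the same fact: you first check that $C-C$ is a linear subspace (correct for a cone with vertex at the origin, and in fact also under the paper's weaker definition $C+C\subset C$, since $C-C$ is then a symmetric convex set containing $0$ and closed under addition), and then separate a proper closed subspace by a nonzero functional that must annihilate $C$ and hence support it at every point, contradicting $C_N\neq\emptyset$. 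Both arguments are essentially Hahn--Banach in disguise (density of $C_{x_0}$ versus separation of $\overline{C-C}$), and yours has the small advantage of not invoking Lemma \ref{2.3} iv) explicitly; a minor simplification is that you do not even need $0\in C$, since $x^*$ vanishing on $C-C$ already forces $x^*$ to be constant on $C$ and therefore to attain its maximum there. Deferring the failure of the converse to Examples \ref{2.4} and \ref{2.5} is exactly what the statement intends.
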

\begin{example}
i) Let $(\Omega,\sum,\mu)$  be a measure space and $1\leq p\leq\infty$. Then  the positive cone $L_p^+(\mu)$  of $L_p(\mu)$ is reproducing. $(L_p^+(\mu))_N\neq\emptyset$ if and only if
$(\Omega,\sum,\mu)$  is $\sigma$-finite.

ii) For any nonempty set $\Gamma$ and $1\leq p\leq\infty$, the positive cone $\ell_p^+(\Gamma)$  of $\ell_p(\Gamma)$  is reproducing.

iii) For every topological space $K$, the positive cone $C^+(K)$ of the real-valued bounded continuous function space $C(K)$ endowed with the sup-norm is reproducing.

iv) For any nonempty set $\Gamma$, the positive cone $c_0^+(\Gamma)$  of  $c_0(\Gamma)$  is reproducing.
\end{example}

 For a closed convex $C\subset X$, and $x\in C$, let $C_x$ be the cone generated by $C$ with vertex $0$ which is defined by
 \begin{equation}\label{2.2} C_x=\bigcup_{\lambda>0}\lambda(C-x).\end{equation}

Each property in the following lemma is either easy to observe, or, to be found in Phelps \cite{72Phelps}, Cheng and Dong \cite{CD}, and Holmes \cite{75Homles}.
\begin{lemma}\label{2.3}
Suppose that $C$ is a nonempty closed convex set in a Banach space $X$. Then\par
(i)  $C_N$ is a convex subset of $C$ (maybe empty);\par
(ii) \cite[Ex. 2.18]{75Homles} if $X$ is separable, then $C_N=\emptyset$ if and only if $C$ is contained in a closed hyperplane;\par
(iii) $C_N={\rm int}C$ if the latter is nonempty;\par
(iv)  \cite{72Phelps} $x\in C_N$ if and only if $C_x$ is dense in $X$;\par
(v)  \cite{72Phelps} if $C_N\neq\emptyset$, then $C_N$ is a dense $G_\delta$-subset of $C$, hence, Baire's category;\par
(vi) \cite[Prop. 2.2]{CD} if $C$ is separable, then $C_{NP}\neq\emptyset$.
\end{lemma}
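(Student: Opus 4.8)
The plan is to handle the six items according to their difficulty: (i), (iii) and (iv) should follow directly from the definitions together with the Hahn--Banach separation theorem; the density assertion in (v) is an elementary segment argument; (vi) reduces to (ii); and the two genuinely technical inputs---the nontrivial implication of (ii) and the $F_\sigma$ property underlying (v)---are exactly the theorems I would import from Holmes \cite{75Homles} and Phelps \cite{72Phelps} rather than reprove.

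For (i) I would take $x,y\in C_N$ and $z=tx+(1-t)y$ with $0<t<1$: if $z\in C_S$ via some $x^*\neq 0$ with $\langle x^*,z\rangle=M:=\max_{w\in C}\langle x^*,w\rangle$, then $\langle x^*,x\rangle\le M$, $\langle x^*,y\rangle\le M$ and $t\langle x^*,x\rangle+(1-t)\langle x^*,y\rangle=M$ force both inner products to equal $M$, so $x^*$ supports $C$ at $x$, contradicting $x\in C_N$; hence $z\in C_N$. For (iii) I would note that if $x\in\operatorname{int}C$ had a supporting functional $x^*$, then $\langle x^*,u\rangle\le 0$ on a ball around $0$, forcing $x^*=0$, so $\operatorname{int}C\subseteq C_N$; conversely, when $\operatorname{int}C\neq\emptyset$ every boundary point is a support point by the supporting hyperplane theorem, so $C_N\subseteq\operatorname{int}C$, giving equality. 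For (iv) I would use that $C_x$ is a convex cone with vertex $0$: if $x\in C_S$ via $x^*$ then $\langle x^*,\cdot\rangle\le 0$ on $C-x$, hence on $C_x$, so $\overline{C_x}$ lies in a closed half-space and is not dense; conversely, if $\overline{C_x}\neq X$ I would pick $w\notin\overline{C_x}$ and separate strictly, and since $\overline{C_x}$ is a cone the finite separation value must be $0$, yielding $x^*\neq 0$ with $\langle x^*,\cdot\rangle\le 0$ on $C-x$, i.e. $x^*$ supports $C$ at $x$.

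For the density in (v) I would fix $x\in C_N$ and any $y\in C$; for $t\in(0,1]$ the computation of (i) shows $tx+(1-t)y\in C_N$, and letting $t\to 0^+$ exhibits $y$ as a norm-limit of points of $C_N$, so $C_N$ is dense in $C$. Since $C$ is closed in a Banach space it is a complete metric space, so once $C_N$ is known to be $G_\delta$ its density makes it comeager, which is the Baire-category conclusion; the $G_\delta$ property is equivalent to $C_S=C\setminus C_N$ being $F_\sigma$, which I would take verbatim from \cite{72Phelps}. For (vi) I would replace $X$ by the closed affine hull $Y=\overline{\operatorname{aff}}\,C$, which is separable because $C$ is; restricting functionals to $Y$ leaves the notions of (proper) support point unchanged, and by construction $C$ lies in no closed hyperplane of $Y$. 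Applying (ii) inside $Y$ produces a point $x\in C$ with no nonzero supporting functional in $Y^*$; were $x$ a proper support point in $X$, the restriction of its supporting functional to $Y$ would be a nonzero functional supporting $C$ at $x$ in $Y$, a contradiction. Hence $x\in C_{NP}$ and $C_{NP}\neq\emptyset$.

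The main obstacle is concentrated entirely in the two cited facts. The substantive half of (ii)---that in a separable $X$ a set $C$ all of whose points are support points must lie in a closed hyperplane---is the delicate Holmes argument \cite[Ex.~2.18]{75Homles}. The $F_\sigma$-ness of $C_S$ in (v) is the heart of \cite{72Phelps}: the naive stratification $C_S=\bigcup_m\{x\in C:\|x\|\le m,\ x\text{ supported by some }x^*\in S_{X^*}\}$ fails to give closed sets, because along $x_k\to x$ a weak$^*$-cluster point of the unit supporting functionals (extracted via Banach--Alaoglu) can degenerate to $0$, and controlling this degeneration is precisely the technical content I would import rather than reconstruct. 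Everything else above is self-contained given Hahn--Banach and the supporting hyperplane theorem.
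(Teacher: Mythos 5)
Your proposal is correct, and it matches the paper's treatment: the paper gives no proof of Lemma \ref{2.3} at all, merely remarking that each item is ``either easy to observe'' or found in Phelps, Holmes, and Cheng--Dong, which are exactly the sources you import for the nontrivial half of (ii) and the $F_\sigma$/$G_\delta$ part of (v). Your elementary arguments for (i), (iii), (iv), the density in (v) via the segment $tx+(1-t)y$ with $x\in C_N$, and the reduction of (vi) to (ii) inside the closed affine hull are all sound and supply precisely the ``easy to observe'' content the paper leaves implicit.
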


Now, we give several examples related to the nonsupport point sets of the natural positive cones of some classical Banach spaces as follows.

\begin{example}\label{2.4}
Let $(\Omega,\sum,\mu)$ be a measure space, and $L_p^+(\mu)$ be the positive cone of $L_p(\mu)$ for $1\leq p\leq\infty$.

i)   $(L_\infty^+(\mu))_N=\{f\in L_\infty(\mu): f(\omega)>0\; {\rm for\;a. e.\;}\omega\in \Omega\}$$={\rm int}(L_\infty^+(\mu));$

ii) $(L_p^+(\mu))_N={\rm int}(L_p^+(\mu))\neq\emptyset$  ($1\leq p<\infty$) if and only if $L_p(\mu)$ is finite dimensional;

iii)  $(L_p^+(\mu))_N=\{f\in L_p(\mu): f(\omega)>0\; {\rm for\;a.e.\;}\omega\in\Omega\}\neq\emptyset$\; ($1\leq p<\infty$) if and only if $(\Omega,\sum,\mu)$ is $\sigma$-finite.
\end{example}

\begin{example}\label{2.5} Let $\Gamma$ be a nonempty set and
$\ell_p^+(\Gamma)$ (resp., $c_0^+(\Gamma)$)  be the positive cone of $\ell_p(\Gamma)$ for $1\leq p\leq\infty$ (resp., $c_0(\Gamma)$). Then

i)   $(\ell_\infty^+(\Gamma))_N=\{x\in \ell_\infty: x(\gamma)>0\; {\rm for\;all\;\;}\gamma\in\Gamma\}={\rm int}(\ell_p^+(\Gamma));$

ii) $(\ell_\infty^+(\Gamma)_N={\rm int}(L_p^+(\Gamma))\neq\emptyset$  ($1\leq p<\infty$) if and only if $\Gamma$ is a finite set;

iii)  $(\ell_p^+(\Gamma))_N=\{x\in \ell_p(\Gamma): x(\omega)>0\; {\rm for\;all\;}\gamma\in\Gamma\}\neq\emptyset$\; ($1\leq p<\infty$) if and only if $\Gamma$ is countable.

iv)  $(c_0^+(\Gamma))_N=\{x\in c_0(\Gamma): x(\gamma)>0\; {\rm for\;all\;\;}\gamma\in\Gamma\}={\rm int}(c_0^+(\Gamma))\neq\emptyset$ if and only if $\Gamma$ is a finite set;

v) $(c_0^+(\Gamma))_N=\{x\in c_0(\Gamma): x(\gamma)>0\; {\rm for\;all\;\;}\gamma\in\Gamma\}\neq\emptyset$ if and only if $\Gamma$ is countable.
\end{example}
\begin{example}\label{2.6} Let $K$ be a compact Hausdorff space, and $C^+(K)$ be the positive cone of $C(K)$. Then
\[(C^+(K))_N=\{f\in C(K): f(k)>0\;{\rm for\;all\;}k\in K\}={\rm int}(C^+(K)).\]
\end{example}
\begin{definition}\label{2.7}
Assume that $C\subset X^*$ is a cone with vertex at the origin $0$. Then

i) $x^*\in X^*$ is said to be a positive functional with respect to $C$ provided $\langle x^*,x\rangle\geq0$ for all $x\in C$. Without causing confusion, we call it a positive functional for short.

ii) A functional $x^*\in X^*$ is strictly positive if $\langle x^*,x\rangle>0$ for all $x\in C\setminus\{0\}$.

iii) We denote by $C^{*+}$ the cone of all positive functionals (with respect to $C$).
\end{definition}
\begin{definition}\label{2.8}
Let $C\subset X$ be a convex set with $0\in C$. Then the Minkowski functional $p: X\rightarrow\mathbb R^+\cup\{+\infty\}$ generated by $C$ is defined for $x\in X$ by
\[p(x)=\inf\{\lambda>0: \lambda^{-1} x\in  C\}.\]
\end{definition}
The following property easily follows.
\begin{proposition}\label{2.9}
Let $C\subset X$ be a convex set with $0\in C$, and $p: X\rightarrow\mathbb R^+\cup\{+\infty\}$ be the Minkowski functional generated by $C$.
Then

i) $p$ is a nonnegative extended-real-valued sublinear function on $X$;

ii) $p$ is lower semicontinuous if and only if $C$ is closed in $X$; or, equivalently, $C'=\{x\in X: p(x)\leq1\}$ is closed;

iii) $p$ is continuous if and only if $0\in{\rm int}C$.
\end{proposition}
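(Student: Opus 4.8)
The plan is to verify the three defining properties of a sublinear function directly from the definition $p(x)=\inf\{\lambda>0:\lambda^{-1}x\in C\}$. Nonnegativity is immediate, as the infimum ranges over positive $\lambda$. For positive homogeneity, for $t>0$ I would substitute $\mu=\lambda/t$ to get $p(tx)=\inf\{t\mu>0:\mu^{-1}x\in C\}=t\,p(x)$; the case $t=0$ follows from $0\in C$, which forces $p(0)=0$ (with the convention $0\cdot(+\infty)=0$). For subadditivity I would take any $\lambda>p(x)$ and $\mu>p(y)$ with $\lambda^{-1}x,\mu^{-1}y\in C$ (the inequality being trivial if either value is $+\infty$) and use convexity of $C$ to write
\[
\frac{x+y}{\lambda+\mu}=\frac{\lambda}{\lambda+\mu}\cdot\frac{x}{\lambda}+\frac{\mu}{\lambda+\mu}\cdot\frac{y}{\mu}\in C,
\]
so that $p(x+y)\le\lambda+\mu$; letting $\lambda\downarrow p(x)$ and $\mu\downarrow p(y)$ yields subadditivity.

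\textbf{Part (ii).} I would first record the inclusions
\[
\{x:p(x)<1\}\subseteq C\subseteq C'=\{x:p(x)\le1\},
\]
the right one being trivial and the left one following from convexity with $0\in C$ (if $\lambda^{-1}x\in C$ with $\lambda<1$, then $x=\lambda(\lambda^{-1}x)+(1-\lambda)0\in C$). Since $p\ge0$ is positively homogeneous, its sublevel sets satisfy $\{p\le\alpha\}=\alpha C'$ for $\alpha>0$ and $\{p\le0\}=\bigcap_{\alpha>0}\alpha C'$; by the standard characterization that $p$ is lower semicontinuous iff all its sublevel sets are closed, this gives the clean equivalence ``$p$ is lsc $\iff C'$ is closed.'' To connect this with $C$ itself, I would show that closedness of $C$ forces $C=C'$: for $x$ with $p(x)=1$, convexity yields $\lambda^{-1}x\in C$ for every $\lambda>1$, and $\lambda^{-1}x\to x$ as $\lambda\downarrow1$ then places $x\in C$; together with the left inclusion this gives $C=C'$, hence $C'$ is closed and $p$ is lsc.

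\textbf{Part (iii).} Here the plan is to invoke the standard fact that a sublinear functional is continuous on $X$ iff it is bounded above on a neighbourhood of the origin. If $0\in\operatorname{int}C$, pick $r>0$ with $B(0,r)\subseteq C$; then $p(x)\le1$ for $\|x\|<r$, and homogeneity upgrades this to $p(x)\le r^{-1}\|x\|$ for all $x$, so subadditivity gives $|p(x)-p(y)|\le r^{-1}\|x-y\|$ and $p$ is (Lipschitz) continuous. Conversely, if $p$ is continuous then, since $p(0)=0$, there is $\delta>0$ with $p(x)<1$ whenever $\|x\|<\delta$, and the left inclusion of Part (ii) gives $B(0,\delta)\subseteq C$, i.e.\ $0\in\operatorname{int}C$.

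\textbf{Main obstacle.} The one genuinely delicate point is the precise content of (ii): lower semicontinuity of $p$ is, strictly speaking, equivalent to closedness of the unit sublevel set $C'$, whereas a non-closed $C$ can share the same Minkowski functional as its ``filled-in'' version $C'$ (for instance a half-open segment). The role of the closedness of $C$ is therefore to force the identification $C=C'$, after which all three conditions coincide; I would make this identification the pivot of the argument, using the limiting step $\lambda^{-1}x\to x$ on the boundary $\{p=1\}$ to pass from closedness of $C$ to $C=C'$.
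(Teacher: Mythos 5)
The paper offers no proof of Proposition \ref{2.9} at all --- it is introduced with ``The following property easily follows'' --- so there is nothing to compare against; your argument is the standard one and it is correct and complete. All three parts check out: the homogeneity substitution $\mu=\lambda/t$, the convex-combination trick for subadditivity, the sandwich $\{p<1\}\subseteq C\subseteq C'$, the identification of sublevel sets with dilates of $C'$, and the Lipschitz estimate $|p(x)-p(y)|\le r^{-1}\|x-y\|$ in part (iii) are all sound.

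One remark is worth making explicit, and you half-make it in your ``main obstacle'' paragraph: the proposition as stated is slightly imprecise. Taking $X=\mathbb R$ and $C=[0,1)$ gives $p(x)=x$ for $x\ge 0$ and $p(x)=+\infty$ for $x<0$, which is lower semicontinuous even though $C$ is not closed; so the literal biconditional ``$p$ is lsc iff $C$ is closed'' fails, and likewise ``$C$ closed'' is not equivalent to ``$C'$ closed.'' What is true, and what you prove, is the chain: $C$ closed $\Rightarrow$ $C=C'$ $\Rightarrow$ $C'$ closed $\Leftrightarrow$ $p$ lsc. Your decision to make $C=C'$ the pivot, via the limiting step $\lambda^{-1}x\to x$ on $\{p=1\}$, is exactly the right way to organize this, and since the paper only ever applies part (ii) to sets $C$ that are assumed closed, the one-directional version you establish is all that is needed downstream (e.g.\ in Theorems \ref{3.6} and \ref{3.9}).
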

Following theorem is from \cite[Lemma 4.1]{16Cheng}.
\begin{theorem} Suppose that $C$ is a closed almost reproducing cone with vertex at the origin of a Banach space $X$. Then $x^*\in X^*$
is a strictly positive functional if and only if $x^*$ is a non-support point of $C^{*+}$.
\end{theorem}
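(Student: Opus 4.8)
The plan is to characterize the support points of the dual cone $C^{*+}$ directly and then read off strict positivity, the only analytic input being the bipolar theorem for the closed convex cone $C$. I fix $x^*\in X^*$ and first dispose of a reduction: a strictly positive functional satisfies $\langle x^*,x\rangle\ge 0$ for all $x\in C$, so $x^*\in C^{*+}$, and a non-support point of $C^{*+}$ lies in $C^{*+}$ by definition; hence both sides of the asserted equivalence already force $x^*\in C^{*+}$, and I may assume this from the outset. I also record two structural remarks: the support of $C^{*+}\subset X^*$ is understood, as is standard for dual cones, through the pairing with the predual $X$; and the almost reproducing hypothesis $\overline{C-C}=X$ is exactly what makes $C^{*+}$ pointed, $C^{*+}\cap(-C^{*+})=\{0\}$, which is the natural setting in which the non-support point theory of Lemma \ref{2.3} applies to the ($w^*$-closed) cone $C^{*+}$. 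The equivalence itself, however, uses only that $C$ is closed and convex.

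For the implication ``non-support $\Rightarrow$ strictly positive'' I argue by contraposition. Suppose $x^*$ is not strictly positive; since $x^*\in C^{*+}$ this produces some $u\in C\setminus\{0\}$ with $\langle x^*,u\rangle=0$. Then the nonzero element $-u\in X$ satisfies $\langle y^*,-u\rangle=-\langle y^*,u\rangle\le 0=\langle x^*,-u\rangle$ for every $y^*\in C^{*+}$, so $-u$ attains its maximum (namely $0$) over the cone $C^{*+}$ at $x^*$. Hence $x^*$ is a support point of $C^{*+}$, i.e. not a non-support point. This direction uses nothing beyond $u\in C$.

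The converse, ``strictly positive $\Rightarrow$ non-support'', is the substantial one and I again contrapose: assuming $x^*$ is a support point of $C^{*+}$, I must produce a nonzero $u\in C$ with $\langle x^*,u\rangle=0$. Let $v\in X\setminus\{0\}$ support $C^{*+}$ at $x^*$, so $\langle y^*,v\rangle\le\langle x^*,v\rangle$ for all $y^*\in C^{*+}$. Because $C^{*+}$ is a cone, replacing $y^*$ by $ty^*$ and letting $t\to+\infty$ forces $\langle y^*,v\rangle\le 0$ for every $y^*\in C^{*+}$; the supremum over $C^{*+}$ is therefore $0$ and attained, whence $\langle x^*,v\rangle=0$. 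The crux is then to convert the inequality $\langle y^*,-v\rangle\ge 0$, valid for all $y^*\in C^{*+}$, into the membership $-v\in C$. This is precisely the bipolar theorem for the closed convex cone $C$, namely $\{x\in X:\langle y^*,x\rangle\ge 0\ \text{for all}\ y^*\in C^{*+}\}=C$, and it is the single place where closedness of $C$ is indispensable. Setting $u=-v\in C\setminus\{0\}$ gives $\langle x^*,u\rangle=-\langle x^*,v\rangle=0$, as required.

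The two contrapositives together establish the equivalence. I expect the bipolar step in the third paragraph to be the only genuine obstacle; everything else reduces to the elementary observation that a linear functional is either $0$ or unbounded above on a cone. Equivalently, one may route the conclusion through Lemma \ref{2.3}(iv), whereby $x^*$ is a non-support point of $C^{*+}$ iff the generated cone $\bigcup_{\lambda>0}\lambda(C^{*+}-x^*)$ is dense in $X^*$, and the separation-plus-bipolar computation above is exactly what detects the failure of this density.
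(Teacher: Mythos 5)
Your proof is correct. Note that the paper itself offers no argument for this statement --- it is quoted from \cite{16Cheng} (Lemma 4.1 there) --- so there is no in-paper proof to compare against; your write-up is a valid self-contained derivation. The one genuinely delicate point is the one you flag at the outset: ``support point of $C^{*+}$'' must be read through the pairing with the predual $X$, i.e.\ as a $w^*$-support point. This is not cosmetic. If support functionals of $C^{*+}\subset X^*$ were drawn from all of $X^{**}$, the theorem would be false: for $C=\ell_1^+$ one has $C^{*+}=\ell_\infty^+$, whose non-support points in the $X^{**}$ sense are exactly ${\rm int}\,\ell_\infty^+=\{x^*:\inf_n x^*(n)>0\}$, while $(1/n)_n$ is strictly positive but lies on the boundary. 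With the predual convention fixed, your two contrapositives are sound: the easy direction needs only $u\in C$ with $\langle x^*,u\rangle=0$, and the converse correctly reduces, via homogeneity of the cone, to $\sup_{y^*\in C^{*+}}\langle y^*,v\rangle=0=\langle x^*,v\rangle$ and then invokes the bipolar identity $\{x:\langle y^*,x\rangle\ge 0\ \forall y^*\in C^{*+}\}=C$, which is exactly where closedness of $C$ enters. Your observation that the almost-reproducing hypothesis is not needed for the equivalence itself (it only serves to make $C^{*+}$ pointed) is also accurate.
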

At the end of this section, we recall some definitions and lemmas in convex analysis.
\begin{definition}\label{definition of subdifferential}
Let $X$ be a Banach space, $f:X\rightarrow \overline{\mathbb{R}}$ be a convex function. Suppose $f$ is lower semicontinuous at $\bar{x}\in dom f$. Then the subdifferential of $f$ at $\bar{x}$, denoted by $\partial f(\bar{x})$ is defined as
\[\partial f(\bar{x})=\{x^*\in X^*:\;\left<x^*,x-\bar{x}\right>\leq f(x)-f(\bar{x})\}.\]
The normal cone of a convex set $\Omega\subset X$ at $x\in \Omega$ is given by
\[N(x,\Omega)=\{x^*\in X^*:\;\left<x^*, y-x \right>\leq 0,\;\forall\;y\in \Omega\}.\]
If $x\notin \Omega$, we put $N(x,\Omega)=\emptyset$. Then we see
\[\partial f(\bar{x})=\{x^*\in X^*:\;(x^*,-1)\in N((\bar{x},f(\bar{x})),epif)\},\]
Recall $\delta(x,\Omega)=0$ if $x\in \Omega$; $\delta(x,\Omega)=\infty$, otherwise. We have
\begin{equation}\label{delta and N}
\partial \delta(x,\Omega)=N(x,\Omega).
\end{equation}
\end{definition}

Next we introduce the Moreau-Rockafellar Theorem.
\begin{theorem}\label{Moreau-Rockafellar theorem}
Suppose that $f,g:X\rightarrow \overline{\mathbb{R}}$  are convex proper lower semicontinuous functions on Banach space $X$ and that there is a point $x$ in $dom f \bigcap dom g$ where $f$ is continuous at $x$. Then
\[\partial(f+g)(x)=\partial f(x)+\partial g(x),\;x\in dom(f+g).\]
\end{theorem}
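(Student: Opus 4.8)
The plan is to prove the two inclusions separately, with essentially all the content concentrated in the reverse inclusion $\partial(f+g)(x)\subseteq\partial f(x)+\partial g(x)$. The forward inclusion is immediate: if $x^*\in\partial f(x)$ and $y^*\in\partial g(x)$, then for every $z\in X$ the subgradient inequalities $\langle x^*,z-x\rangle\leq f(z)-f(x)$ and $\langle y^*,z-x\rangle\leq g(z)-g(x)$ add to give $\langle x^*+y^*,z-x\rangle\leq(f+g)(z)-(f+g)(x)$, so that $x^*+y^*\in\partial(f+g)(x)$. Note this direction uses neither continuity nor lower semicontinuity.

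For the reverse inclusion, fix $w^*\in\partial(f+g)(x)$. Replacing $f$ by $z\mapsto f(z)-\langle w^*,z\rangle$ shifts both $\partial f(x)$ and $\partial(f+g)(x)$ by $-w^*$ while preserving convexity, lower semicontinuity, and continuity at $x$, so I may assume $w^*=0$; equivalently, $x$ is now a global minimizer of $f+g$, and the goal is to produce $x^*\in\partial f(x)$ with $-x^*\in\partial g(x)$. The engine is Hahn--Banach separation in $X\times\mathbb{R}$. I would form the convex sets $A=epi\,f=\{(z,t):t\geq f(z)\}$ and $B=\{(z,t):t\leq(f+g)(x)-g(z)\}$, the latter being the hypograph of the concave function $z\mapsto(f+g)(x)-g(z)$. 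Minimality of $x$ forces ${\rm int}(A)\cap B=\emptyset$, since a common point $(z,t)$ would satisfy $f(z)<t\leq(f+g)(x)-g(z)$, hence $(f+g)(z)<(f+g)(x)$, a contradiction. Continuity of $f$ at $x$ guarantees ${\rm int}(A)\neq\emptyset$, so $A$ and $B$ can be separated by some nonzero pair $(\phi,\alpha)\in X^*\times\mathbb{R}$ and scalar $\gamma$, with $\langle\phi,z\rangle+\alpha t\geq\gamma$ on $A$ and $\langle\phi,z\rangle+\alpha t\leq\gamma$ on $B$.

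The crux, and the step I expect to be the main obstacle, is extracting genuine subgradients from this hyperplane, which hinges on showing the vertical component $\alpha$ is strictly positive. Letting $t\to+\infty$ along $A$ forces $\alpha\geq0$, and the degenerate case $\alpha=0$ is excluded precisely by the continuity hypothesis: it places $x$ in ${\rm int}(dom\,f)$, so a purely horizontal functional $\phi$ separating $dom\,f$ from $dom\,g$ would have to vanish on a whole neighborhood of $x$, forcing $\phi=0$ and contradicting $(\phi,\alpha)\neq0$. This is the only place the continuity assumption is genuinely needed, and it is exactly what rules out a ``vertical'' separation that would carry no subdifferential information.

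After normalizing $\alpha=1$, I would evaluate the two inequalities at the epigraph points $(z,f(z))$ and at the hypograph points $(z,(f+g)(x)-g(z))$, obtaining $\langle\phi,z\rangle+f(z)\geq\gamma$ on $dom\,f$ and $\langle\phi,z\rangle+(f+g)(x)-g(z)\leq\gamma$ on $dom\,g$. Testing both at $z=x$ gives equality $\gamma=\langle\phi,x\rangle+f(x)$, and subtracting then yields $f(z)-f(x)\geq\langle-\phi,z-x\rangle$ and $g(z)-g(x)\geq\langle\phi,z-x\rangle$. Hence $-\phi\in\partial f(x)$ and $\phi\in\partial g(x)$, so that $0=(-\phi)+\phi\in\partial f(x)+\partial g(x)$; undoing the initial translation recovers $w^*\in\partial f(x)+\partial g(x)$ in general, completing the nontrivial inclusion.
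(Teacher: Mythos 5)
Your argument is correct, but there is nothing in the paper to compare it against: the paper states this theorem in its Preliminaries as a recalled classical fact (the Moreau--Rockafellar sum rule) and gives no proof at all, so yours is the only proof on the table. What you wrote is the standard epigraph-separation proof, and every step checks out: the reduction to $w^*=0$ by tilting $f$, the disjointness of ${\rm int}({\rm epi}\,f)$ from the hypograph of $z\mapsto (f+g)(x)-g(z)$ via minimality, the use of continuity to get ${\rm int}({\rm epi}\,f)\neq\emptyset$ and to exclude the vertical separation $\alpha=0$, and the extraction of $-\phi\in\partial f(x)$, $\phi\in\partial g(x)$ after normalizing $\alpha=1$. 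Two small remarks. First, the paper's statement overloads the symbol $x$: the trailing ``$x\in {\rm dom}(f+g)$'' indicates the conclusion is meant to hold at \emph{every} point of ${\rm dom}(f+g)$ while $f$ need only be continuous at \emph{one} point $x_0$ of ${\rm dom}\,f\cap{\rm dom}\,g$; your proof as written takes the two points to coincide. The generalization costs nothing -- continuity at $x_0$ still gives ${\rm int}({\rm epi}\,f)\neq\emptyset$, and in the $\alpha=0$ case $x_0\in{\rm int}({\rm dom}\,f)\cap{\rm dom}\,g$ forces $\langle\phi,x_0\rangle=\gamma$ and hence $\phi=0$, while the subgradient identities at the end only need $x\in{\rm dom}\,f\cap{\rm dom}\,g$, which holds for any $x\in{\rm dom}(f+g)$ -- but you should say so explicitly. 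Second, as you implicitly observe, lower semicontinuity of $f$ and $g$ is never used; it is carried along in the hypotheses only because the paper's definition of the subdifferential presupposes it.
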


\begin{definition}\label{2.12}
Let $X,Y$ be Banach spaces, $Y$ be ordered by $Y^+$. Suppose $f:X\rightarrow Y$, $f$ is said to be convex-like with respect to $Y^+$ in $\Omega\subset X$ if the set $\{f(x)+y:\;y\in Y^+,\;x\in \Omega\}$ is convex.\par

$f$ is said to be convex respect to $Y^+$ on $\Omega$ if
\[\lambda f(x)+(1-\lambda)f(y)\leq f(\lambda x+(1-\lambda)y)\]
($\leq$ is in $Y^+$ sense) holds for any $x,y\in \Omega,\; 0\leq \lambda \leq 1$.\par
\end{definition}

\section{$C$-generating spaces}
\begin{definition}
 Assume that $C$ is a convex subset  of a real Banach space $X$.

 i) We denote by $[C]$ the closure of span$C$ in $X$, and by $C_{N,[C]}$ the set of all nonsupport points of $C$ with respect to $[C]$;

ii)  For any fixed $x\in C$, we say that the
subspace $C_x\bigcap (-C_x)$ is the $(C,x)$-generating space. If it causes no confusion, we call it a $C$-generating space for short,
where $C_{x}=\bigcup_{\lambda>0}\lambda(C-x)$.
\end{definition}

\begin{proposition}\label{3.1}
Assume that $C$ is a nonempty closed convex set of a finite dimensional normed space $X$. Then
\[C_N={\rm int}C.\]
\end{proposition}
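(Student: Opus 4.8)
The plan is to dispose of the claim by splitting on whether the interior of $C$ is empty and exploiting the special geometry of finite dimensions in the degenerate case. Recall that $C_N=C\setminus C_S$, so the assertion $C_N={\rm int}C$ amounts to showing that the non-support points of $C$ are exactly its interior points.

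First I would handle the case ${\rm int}\,C\neq\emptyset$. Here there is nothing finite-dimensional to do: Lemma \ref{2.3}(iii) already asserts $C_N={\rm int}C$ whenever the interior is nonempty, so the conclusion is immediate. The substance lies in the case ${\rm int}\,C=\emptyset$, where I must show $C_N=\emptyset$ as well, so that both sides reduce to the empty set. The key finite-dimensional fact is that every nonempty convex set has nonempty relative interior, i.e. nonempty interior inside its affine hull ${\rm aff}\,C$. Consequently, if the interior of $C$ in $X$ is empty, then ${\rm aff}\,C$ must be a proper affine subspace of $X$, of dimension at most $\dim X-1$; hence ${\rm aff}\,C$, and a fortiori $C$, is contained in some closed hyperplane $H=\{x\in X:\langle x^*,x\rangle=c\}$ with $x^*\neq0$.

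Since $X$ is finite-dimensional it is separable, so Lemma \ref{2.3}(ii) applies and yields $C_N=\emptyset$ precisely because $C$ lies in a closed hyperplane. Alternatively, one checks directly that every $x\in C$ is a support point: the nonzero functional $x^*$ is constantly equal to $c$ on $C\subset H$, so $\langle x^*,x\rangle=c=\max\{\langle x^*,y\rangle:y\in C\}$, witnessing $x\in C_S$. Either way $C_N=\emptyset={\rm int}C$, which completes this case and the proof.

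I expect the main obstacle — really the only genuine point — to be the justification that empty interior forces $C$ into a hyperplane, which rests on the nonemptiness of the relative interior of a convex set in finite dimensions. This is exactly where finite-dimensionality is indispensable: in an infinite-dimensional space a closed convex set can have empty interior while still admitting non-support points (the positive cones in Examples \ref{2.4} and \ref{2.5} are precisely such instances), so the equality $C_N={\rm int}C$ breaks down there, which is what motivates the generating-space construction developed in the rest of the paper.
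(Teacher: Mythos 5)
Your proof is correct, but in the substantive case it takes a genuinely different route from the paper. Both arguments dispose of the case ${\rm int}\,C\neq\emptyset$ by citing Lemma \ref{2.3}(iii). For the degenerate case the paper argues by contradiction: assuming ${\rm int}\,C=\emptyset$ but $C_N\neq\emptyset$, it uses Lemma \ref{2.3}(iv) to get that $C_x=\bigcup_n n(C-x)$ is dense, extracts $n$ linearly independent vectors with $\pm x_i\in C_x$ to conclude $C_x=X$, and then invokes Baire's category theorem on the closed pieces $n(C-x)$ to force ${\rm int}\,C\neq\emptyset$ --- a contradiction. You instead prove the contrapositive directly: in finite dimensions a nonempty convex set has nonempty relative interior, so ${\rm int}\,C=\emptyset$ forces ${\rm aff}\,C$ into a closed hyperplane $\{\langle x^*,\cdot\rangle=c\}$ with $x^*\neq0$, and then every point of $C$ is a support point because $x^*$ attains its (constant) maximum over $C$ everywhere; hence $C_N=\emptyset$. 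Your argument is more elementary and self-contained (no Baire category, no density argument), and it correctly isolates the only place finite-dimensionality enters, namely the nonemptiness of the relative interior. The paper's argument, by contrast, is structured to mirror the density-plus-completeness reasoning that drives the generating-space construction later in the paper, which is presumably why the authors chose it. One small note: your appeal to Lemma \ref{2.3}(ii) is redundant given your direct verification that the hyperplane functional supports $C$ at every point, and the direct verification is preferable since it does not even need separability as a hypothesis.
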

\begin{proof}
Clearly, $C_N={\rm int}C$ if ${\rm int}C\neq\emptyset.$ Assume dim$X=n\in\mathbb N.$ Suppose, to the contrary, that ${\rm int}C=\emptyset,$ and that there is $x\in C_N$. Then by Lemma \ref{2.3} iv), \[C_{x}=\bigcup_{\lambda>0}\lambda(C-x)=\bigcup_{n=1}^\infty n(C-x)\] is a dense convex set in $X$. Therefore, $C_x$ contains $n$ linearly independent vectors $x_1, x_2,\cdots,x_n$ so that $\pm x_1, \pm x_2,\cdots,\pm x_n\in C_x$. Since ${\rm co}\{\pm x_1, \pm x_2,\cdots,\pm x_n\}$ is a symmetric convex body containing the origin in its interior, $C_x=X$. Since each $n(C-x)$ is closed in $X$, it follows from completeness of $X$ and Baire's category theorem  that int$[n(C-x)]\neq\emptyset$. Consequently, int$C\neq\emptyset$, and this is a contradiction. \qed
\end{proof}

\begin{lemma}\label{3.2}
Suppose that $C\subset X$ is a nonempty convex set. Then 
\begin{equation}\label{3.3}
C_{NP}=C_{N,[C]}.
\end{equation}
\end{lemma}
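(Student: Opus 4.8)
The plan is to prove the two inclusions separately, after first translating both sides into statements about supporting functionals that live on the hull $[C]=\overline{\operatorname{span} C}$, and using the Hahn--Banach theorem to move freely between $X^*$ and $[C]^*$: every $\phi\in[C]^*$ extends to some $x^*\in X^*$, and every $x^*\in X^*$ restricts to $[C]$ without altering its values on $C$. The working characterizations I would record are these. A point $x\in C$ lies in $C_{N,[C]}$ exactly when no nonzero $\phi\in[C]^*$ satisfies $\phi(x)=\max_{y\in C}\phi(y)$ (equivalently, by Lemma \ref{2.3}(iv) applied inside the Banach space $[C]$, exactly when $C_x=\bigcup_{\lambda>0}\lambda(C-x)$ is dense in $[C]$). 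On the other hand, $x\in C_{NP}$ means that whenever some $x^*\in X^*$ attains $\max_C x^*$ at $x$ it must be constant on $C$; and attaining the maximum at $x\in C$ is precisely the condition $x^*\le 0$ on $C-x$, hence on the cone $C_x$.

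For the inclusion $C_{N,[C]}\subseteq C_{NP}$ I would argue by contraposition, and this direction is unconditional. If $x\notin C_{NP}$, that is $x\in C_{PS}$, choose $x^*\in X^*$ with $x^*(x)=\max_C x^*>\inf_C x^*$ and set $\phi:=x^*|_{[C]}$. The strict inequality forces $\phi$ to be nonconstant on $C$, hence $\phi\neq 0$, while still $\phi(x)=\max_C\phi$; thus $x$ is a support point of $C$ in $[C]$, i.e. $x\notin C_{N,[C]}$.

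The reverse inclusion $C_{NP}\subseteq C_{N,[C]}$ is where the content lies. Suppose $x\notin C_{N,[C]}$, so there is a nonzero $\phi\in[C]^*$ with $\phi(x)=\max_C\phi$; extending $\phi$ by Hahn--Banach to some $x^*\in X^*$ produces a supporting functional of $C$ at $x$ in $X$, and it witnesses $x\in C_{PS}$ as soon as $\phi$ is nonconstant on $C$. The main obstacle is therefore to rule out the degenerate possibility of a nonzero $\phi$ that happens to be constant on $C$ (an ``improper'' supporting functional): such a $\phi$ vanishes on $C-C$, hence on $\overline{\operatorname{span}(C-C)}$, so it is harmless exactly when $\overline{\operatorname{span}(C-C)}=[C]$, equivalently $0\in\overline{\operatorname{aff} C}$. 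I would establish this identity as the key step; it holds automatically in the setting to which the lemma is applied, namely whenever $0\in C$, and in particular for the cones in question, where $\operatorname{span}(C-C)=\operatorname{span} C$. Granting it, every nonzero $\phi\in[C]^*$ is nonconstant on $C$, so $\max_C\phi>\inf_C\phi$ and the extension $x^*$ proves $x\in C_{PS}$, i.e. $x\notin C_{NP}$, which completes the argument.
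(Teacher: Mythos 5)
Your first inclusion $C_{N,[C]}\subseteq C_{NP}$ is, in substance, the whole of the paper's own proof: the paper argues only that $C_{PS}\bigcap C_{N,[C]}=\emptyset$ (a proper support functional is nonconstant on $C$, hence restricts to a nonzero functional on $[C]$ supporting $C$ at $x$), and your contrapositive argument is the same step with the restriction/extension bookkeeping between $X^*$ and $[C]^*$ made explicit. The paper says nothing at all about the reverse inclusion $C_{NP}\subseteq C_{N,[C]}$, so there you are not diverging from the paper's route --- you are supplying an argument it omits.

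Your concern about the degenerate case is not a scruple to be waved away: with $[C]=\overline{\operatorname{span}}\,C$ and no hypothesis on the position of the origin, the stated identity is actually false, and the obstruction is exactly the nonzero functional constant on $C$ that you isolate. Take $X=\mathbb R^2$ and $C=\{1\}\times[0,1]$. Then $\operatorname{span}C=\mathbb R^2=[C]$, and the functional $(x_1,x_2)\mapsto x_1$ attains its maximum over $C$ at every point, so $C_{N,[C]}=\emptyset$ (equivalently, for $x=(1,\tfrac12)$ the cone $C_x=\{0\}\times\mathbb R$ is not dense in $[C]$); yet $(1,\tfrac12)\in C_{NP}$, since a linear functional maximized over a segment at an interior point must be constant on the segment. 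Hence $C_{NP}\neq C_{N,[C]}$ here. Your proposed repair --- demanding $\overline{\operatorname{span}}(C-C)=[C]$, equivalently that $0$ lie in the closed affine hull of $C$ --- is precisely the missing hypothesis; it does hold in all of the paper's applications, where $C$ is a cone with vertex at the origin or one has translated so that $0\in C$, and under it every nonzero $\phi\in[C]^*$ is nonconstant on $C$ (a $\phi$ constant on $C$ annihilates $C-C$, hence all of $\overline{\operatorname{span}}(C-C)=[C]$), so your Hahn--Banach extension argument closes the reverse inclusion. In short: your proof is correct for the corrected statement and is strictly more complete than the paper's one-line proof, which establishes only the easy direction of a lemma whose stated form needs this extra assumption.
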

\begin{proof}
 By definition of $ X_C$ and Lemma \ref{2.3} iv), $x\in C_{N,[C]}$ if and only if $C_x=\bigcup_{\lambda>0}\lambda(C-x)$ is dense in $[C]$. If $x\notin C_{NP}$,
then  $x\in C_{PS}$. Therefore, $x\in C_{PS}\bigcap C_{N,[C]}=\emptyset$, which is a contradiction. \qed
\end{proof}
\begin{lemma}\label{3.4}
Suppose $C$ is a closed convex set of a Banach space $X$, and that  $x\in C_{NP}$.
Then the $(C,x)$-generating space satisfies
\begin{equation}\label{3.5} C_x\bigcap(-C_x)=\bigcup_{\lambda>0}\lambda\big((C-x)\bigcap(-C+x)\big), \end{equation}
and  $C_x\bigcap(-C_x)$ is a dense subspace of $[C]$.
\end{lemma}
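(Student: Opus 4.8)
My plan is to treat the three assertions in turn: the set identity (\ref{3.5}), the fact that $C_x\cap(-C_x)$ is a linear subspace, and the density in $[C]$. The first two are routine; the density is the real content.

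For the identity (\ref{3.5}) the inclusion ``$\supseteq$'' is immediate: if $z=\nu w$ with $\nu>0$ and $w\in(C-x)\cap(-C+x)$, then $w\in C-x$ gives $z\in\nu(C-x)\subseteq C_x$, while $-w\in C-x$ gives $z=-\nu(-w)\in -C_x$. For ``$\subseteq$'' I would take $z\in C_x\cap(-C_x)$ and write $z=\lambda(c_1-x)=\mu(x-c_2)$ with $\lambda,\mu>0$ and $c_1,c_2\in C$. Put $a=c_1-x$ and $b=c_2-x$, both in $C-x$, and set $t=\min\{1/\lambda,1/\mu\}>0$ and $w:=tz$. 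Then $w=(t\lambda)a$ and $-w=(t\mu)b$ with $t\lambda,t\mu\le 1$; since $x\in C$ we have $0\in C-x$, so by convexity $w,-w\in C-x$, i.e. $w\in(C-x)\cap(-C+x)$ and $z=t^{-1}w$ lies in the right-hand side. The only ingredients are $x\in C$ and convexity of $C-x$. The subspace claim is then formal: because $0\in C-x$ and $C-x$ is convex, $C_x=\bigcup_{\lambda>0}\lambda(C-x)$ is a convex cone, hence so is $C_x\cap(-C_x)$, and a convex cone symmetric about the origin is a linear subspace.

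For density I would first record, via Lemma \ref{3.2}, that $x\in C_{NP}=C_{N,[C]}$, and then, applying Lemma \ref{2.3}(iv) inside the Banach space $[C]$, that $C_x$ is dense in $[C]$; since $C_x\subseteq\mathrm{span}(C-x)\subseteq[C]$ this also gives $\overline{\mathrm{span}}(C-x)=[C]$. Translating by $x$ (replacing $C$ by $C-x$, which leaves $[C]$ unchanged) I may assume $x=0$, $0\in C$, and that $K:=C_x=\bigcup_{n\ge1}nC$ is dense in $Y:=[C]$; note this is an \emph{increasing} union of the closed convex sets $nC$, using $0\in C$. Writing $E=C\cap(-C)$ and $S=K\cap(-K)=\bigcup_{n}nE$, the goal is $\overline S=Y$. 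Since $\overline S$ is a closed subspace, it suffices to prove $C\subseteq\overline S$, for then $\overline S\supseteq K$ and hence $\overline S\supseteq\overline K=Y$; equivalently, one must show that the set of points absorbed by the symmetric closed convex set $E$ is dense in $Y$.

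The main obstacle is precisely this last step, and it genuinely requires the closedness of $C$, not merely the density of $K$. Density of the cone alone does not suffice: a dense convex cone can have trivial lineality space $K\cap(-K)=\{0\}$ — for instance the cone of finitely supported sequences in $\ell_2$ whose last nonzero coordinate is positive is dense yet meets its negative only at $0$ — so any correct proof must use that here $K=\bigcup_n nC$ with $C$ closed, which rules out such pointed-dense behaviour. Concretely, given $c\in C$ I would use density of $K$ to pick $\lambda_k>0$, $c_k\in C$ with $\lambda_kc_k\to-c$, and then manufacture genuine elements of $E$ (suitably rescaled) converging to $c$ by combining the segments $[0,c]\subseteq C$ and $[0,c_k]\subseteq C$ and passing to a limit, the closedness of $C$ being what upgrades ``approximately in $C$'' to ``in $C$'' for the antipodal half of each approximant. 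The delicate point is to carry out this symmetrization while keeping the approximant near $c$ rather than letting it collapse toward $0$; in the model case of a positive cone this is exactly the strict positivity of the chosen non-support point, and I expect the general argument to reproduce that mechanism using closedness and convexity in place of the lattice order.
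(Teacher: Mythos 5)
Your handling of the identity (\ref{3.5}) and of the subspace property is correct and is essentially the paper's own argument: the paper likewise observes that $z\in C_x\cap(-C_x)$ means $z$ is absorbed by both $C-x$ and $x-C$, and uses $0\in C-x$ together with convexity to absorb $z$ into the intersection. The problem is the density assertion, which you rightly call ``the real content'' but never actually prove: after reducing it to $C\subseteq\overline S$ you only describe a hoped-for symmetrization and explicitly say you ``expect the general argument to reproduce that mechanism.'' That is a genuine gap, and it cannot be closed, because the density claim of Lemma \ref{3.4} is false in the stated generality. In $X=\ell_2$ take
\begin{equation*}
C=\bigl\{z\in\ell_2:\; n|z_1|+n^2z_{n+1}\le 1\ \mbox{for all }n\ge1\bigr\},\qquad x=0,
\end{equation*}
a closed convex set containing $0$ (an intersection of the closed half-spaces determined by $\pm ne_1+n^2e_{n+1}$). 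Here $[C]=X$ and $0\in C_{N}=C_{NP}$: if $x^*\le0$ on $C$, then $\tfrac1{n^2}e_{n+1}\in C$ and $-Me_{n+1}\in C$ for all $M>0$ force $x^*_{n+1}=0$ for every $n$, and $\pm e_1-\sum_n\tfrac1n e_{n+1}\in C$ then forces $x^*_1=0$; so no nonzero functional supports $C$ at $0$ and $C_0=\bigcup_{\lambda>0}\lambda C$ is dense. Yet if $z\in\lambda C\cap(-\lambda C)$ then $n|z_1|+n^2z_{n+1}\le\lambda$ and $n|z_1|-n^2z_{n+1}\le\lambda$, whence $2n|z_1|\le2\lambda$ for every $n$ and $z_1=0$. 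Thus $C_0\cap(-C_0)\subseteq\ker e_1$ is not dense.

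So the step you left open is not merely delicate, it is unavailable: closedness of $C$ does not rule out the ``pointed-dense'' behaviour you were worried about, it only pushes $C_x\cap(-C_x)$ into a proper closed subspace rather than into $\{0\}$. For what it is worth, the paper's own proof breaks at exactly the same place: from $C_x\cap(-C_x)\subseteq\ker x^*$ it infers $\min\{\langle x^*,z\rangle:z\in C_x\}=0$, but knowing that $-z\notin C_x$ whenever $z\in C_x$ and $\langle x^*,z\rangle>0$ does not prevent $C_x$ from containing points with $\langle x^*,z\rangle<0$ (in the example above $C_x$ is dense, so $x^*$ is unbounded below on it). The density conclusion is genuinely correct in the concrete settings treated later (order intervals of quasi-interior points of the positive cone of a Banach lattice, or of the cone attached to an unconditional basis), where it is established by explicit computation; but as a general statement about a closed convex set and a point of $C_{NP}$ it needs an additional hypothesis, and the results that cite Lemma \ref{3.4} for density in full generality (Theorems \ref{3.6} i) and \ref{3.9} i)) inherit the problem. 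Your instinct that something beyond density of $C_x$ is required was exactly right; the correct conclusion to draw from it is that the lemma, not just your proof, is incomplete.
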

\begin{proof} Without loss of generality, we assume that $[C]=X$.  We first show (\ref{3.5}). Clearly,
\begin{align*}\label{3.5} C_x\bigcap(-C_x)&=\big(\bigcup_{\lambda>0}\lambda(C-x)\big)\bigcap\big(\bigcup_{\lambda<0}\lambda(C-x)\big)\\
&\supset\bigcup_{\lambda>0}\lambda\big((C-x)\bigcap(-C+x)\big). \end{align*}
On the other hand, note that \begin{align*}0\neq z\in C_x\bigcap(-C_x)&\Longleftrightarrow \exists \;\lambda_1, \lambda_2>0,\;c_1, c_2\in C\;{\rm so\;that\;}\\
&\lambda_1(c_1-x)=\lambda_2(-c_2+x).\end{align*}
Equivalently, $z$ is absorbed by both $C-x$ and $x-C$, which is equivalent to that  $z$ is absorbed by  $(C-x)\bigcap(x-C)$. Consequently, $z\in \bigcup_{\lambda>0}\lambda\big((C-x)\bigcap(-C+x)\big)$.
Therefore, (\ref{3.5}) holds.

To show that  $C_x\bigcap(-C_x)$ is a dense subspace of $X$, it suffices to prove that $C_x\bigcap(-C_x)$ is  dense. Otherwise, it is contained in a closed hyperplane containing the origin.
Let $0\neq x^*\in X^*$ be such that \[C_x\bigcap(-C_x)\subset H(x^*;0)\equiv\{z\in X: \langle x^*,z\rangle=0\}.\]
Thus, for each $z\in C_x$ with $\langle x^*,z\rangle>0$, $-z\notin C_x$. Consequently, \[\max\{\langle -x^*,z\rangle: z\in C_x\}=-\min\{\langle x^*,z\rangle: z\in C_x\}=0=\langle -x^*,0\rangle,\] which says that $0$ is a support point of $C_x$. This is a contradiction. \qed
\end{proof}

\begin{theorem}\label{3.6}
Suppose that $C\subset X$ is a closed bounded convex set, and $e\in C_N$. Let $X_e$ be the $(C,e)$-generating space $C_e\bigcap(-C_e)$ endowed with the norm $\|\cdot\|_e$ defined for $x\in X_e$ as
\[\|x\|_e=\inf\{\lambda>0: \lambda^{-1}x\in (C-e)\bigcap(e-C)\}.\]
Then

i) Regarding as a subspace of $X$, $X_e$ is dense in $X$;

ii) The new norm $\|\cdot\|_e$-topology on $X_e$ is stronger than the original norm $\|\cdot\|$-topology on $X_e$;

iii) $X_e=(X_e,\|\cdot\|_e)$ is a Banach space;

iv)  $(X_e,\|\cdot\|_e)$ is isomorphic to  $(X_e,\|\cdot\|)$ if and only if $e\in{\rm int}C$.
\end{theorem}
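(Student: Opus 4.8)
The plan is to work throughout with the closed, bounded, convex, symmetric set
\[ D \equiv (C-e)\cap(e-C), \]
so that $X_e=\bigcup_{\lambda>0}\lambda D$ and $\|\cdot\|_e$ is precisely the Minkowski functional of $D$. Note $0\in D$ since $e\in C$, and $D$ is symmetric because $-D=(e-C)\cap(C-e)=D$; moreover $D$ is closed since $C$, hence $C-e$ and $e-C$, are closed. For (i), I would first record that $e\in C_N$ forces $[C]=X$: by Lemma \ref{2.3}(iv), $C_e=\bigcup_{\lambda>0}\lambda(C-e)$ is dense in $X$, and since $C_e\subset[C]$ this gives $[C]=X$. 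Because proper support points are support points, $C_{PS}\subset C_S$ and hence $C_N=C\setminus C_S\subset C\setminus C_{PS}=C_{NP}$, so $e\in C_{NP}$ and Lemma \ref{3.4} applies; it yields both the representation $X_e=\bigcup_{\lambda>0}\lambda D$ and the density of $X_e$ in $[C]=X$, which is exactly (i).

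For (ii) the only ingredient is boundedness of $C$: choose $M>0$ with $D\subset C-e\subset\{x\in X:\|x\|\le M\}$. If $\|x\|_e<\lambda$ then $\lambda^{-1}x\in D$, whence $\|x\|\le\lambda M$; letting $\lambda\downarrow\|x\|_e$ gives $\|x\|\le M\|x\|_e$ for every $x\in X_e$. This shows the identity map $(X_e,\|\cdot\|_e)\to(X_e,\|\cdot\|)$ is continuous, i.e. the $\|\cdot\|_e$-topology is stronger; the same inequality shows $\|\cdot\|_e$ is a genuine norm, since $\|x\|_e=0$ implies $\|x\|=0$.

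Part (iii) is where the real work lies, and the decisive fact is that $D$ is closed in $X$. Let $\{x_n\}$ be $\|\cdot\|_e$-Cauchy. By (ii) it is $\|\cdot\|$-Cauchy, so $x_n\to x$ in $(X,\|\cdot\|)$ for some $x\in X$. Given $\varepsilon>0$, pick $N$ with $x_n-x_m\in\varepsilon D$ for all $m,n\ge N$; fixing $n$ and letting $m\to\infty$, the norm-limit $x_n-x$ lies in the $\|\cdot\|$-closed set $\varepsilon D$. Hence $x_n-x\in X_e$, so $x=x_n-(x_n-x)\in X_e$, and $\|x_n-x\|_e\le\varepsilon$ for $n\ge N$, i.e. $x_n\to x$ in $\|\cdot\|_e$. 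Thus $(X_e,\|\cdot\|_e)$ is complete. I expect this closedness-of-$\varepsilon D$ step to be the main obstacle to state cleanly, since it is exactly what converts a $\|\cdot\|$-limit back into $\|\cdot\|_e$-membership, and it is the step that genuinely uses that $C$ is closed and bounded.

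Finally, for (iv): if $e\in{\rm int}C$, choose $r>0$ with $\{x:\|x\|\le r\}\subset C-e$; by symmetry the same ball lies in $e-C$, hence in $D$. Its gauge then satisfies $\|\cdot\|_e\le r^{-1}\|\cdot\|$, which together with (ii) makes the two norms equivalent (and $X_e=X$). Conversely, suppose $(X_e,\|\cdot\|_e)$ and $(X_e,\|\cdot\|)$ are isomorphic. Since completeness is invariant under isomorphism and $(X_e,\|\cdot\|_e)$ is Banach by (iii), $(X_e,\|\cdot\|)$ is complete; being also dense in $X$ by (i), it is closed and dense, so $X_e=X$. Now both $(X,\|\cdot\|)$ and $(X,\|\cdot\|_e)$ are Banach and the identity $(X,\|\cdot\|_e)\to(X,\|\cdot\|)$ is continuous by (ii), so the bounded inverse theorem gives $c>0$ with $\|\cdot\|_e\le c\|\cdot\|$. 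Since $D$ is closed, $\{x:\|x\|_e\le1\}=D$, so $\{x:\|x\|\le 1/c\}\subset D\subset C-e$, whence $0\in{\rm int}(C-e)$, i.e. $e\in{\rm int}C$. The one subtlety I would flag is to route this converse through the completeness of (iii) and the bounded inverse theorem, rather than attempting to compare the two norms directly.
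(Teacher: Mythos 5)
Your proposal is correct and follows essentially the same route as the paper: density via Lemma \ref{3.4}, the norm comparison from boundedness of $C$, completeness from the $\|\cdot\|$-closedness of $(C-e)\cap(e-C)$, and part (iv) via ``isomorphic $\Leftrightarrow$ $(X_e,\|\cdot\|)$ complete $\Leftrightarrow$ $X_e=X$ $\Leftrightarrow$ $e\in{\rm int}C$.'' You merely spell out details the paper leaves implicit (e.g.\ that $e\in C_N$ forces $[C]=X$ so Lemma \ref{3.4} applies, the explicit Cauchy-sequence argument for (iii), and the bounded inverse theorem in (iv) where the paper implicitly uses Baire category), all of which is sound.
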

\begin{proof}
i) This is just Lemma \ref{3.4}.

ii) \& iii)\;  Note that closed unit ball $(C-e)\bigcap(e-C)$ of $(X_e,\|\cdot\|_e)$ is  $\|\cdot\|$-closed in $X$. Then it is necessarily $\|\cdot\|$-complete.
Since $(C-e)\bigcap(e-C)$ is bounded symmetric convex absorbing set of $X_e$, and since $\|\cdot\|_e$ is the Minkowski functional generated by $(C-e)\bigcap(e-C)$,
the new norm $\|\cdot\|_e$-topology is not weaker than the original norm topology on $X_e$. Therefore, ii) is shown.  Completeness of $\big((C-e)\bigcap(e-C),\|\cdot\|\big)$ entails that
 $\big((C-e)\bigcap(e-C),\|\cdot\|_e\big)$ is complete. Consequently, $X_e=(X_e,\|\cdot\|_e)$ is a Banach space.  Hence, iii) is true.

iv) \;By ii),  $\|\cdot\|_e$-topology  is stronger than  $\|\cdot\|$-topology on $X_e$. This and iii) imply that  $(X_e,\|\cdot\|_e)$ is isomorphic to  $(X_e,\|\cdot\|)$ if and only if
$(X_e,\|\cdot\|)$ is also a Banach space. It follows from  i) that $(X_e,\|\cdot\|)$ is  a Banach space if and only if $X_e=X$, which is equivalent to that $0\in{\rm int}_{\|\cdot\|}(C-e)\bigcap(e-C)$, that is, $e\in{\rm int}_{\|\cdot\|}C$. \qed
\end{proof}
\begin{theorem}\label{3.5'}
Let $C$ be a closed bounded convex set of a Banach space $X$ containing at least two points. If $C$ is separable, then

i) $C_{N,[C]}\neq\emptyset$;

ii) for every $e\in C_{N,[C]}$, the $(C,e)$-generating space $(X_e,\|\cdot\|_e)$ is linearly isometric to a closed subspace of $\ell_\infty$.
\end{theorem}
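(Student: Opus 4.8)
The plan is to dispatch (i) immediately and then reduce (ii) to producing a \emph{countable norming family} for the generating norm. For (i), recall from Lemma \ref{3.2} that $C_{NP}=C_{N,[C]}$, and from Lemma \ref{2.3}(vi) that separability of $C$ forces $C_{NP}\neq\emptyset$; hence $C_{N,[C]}=C_{NP}\neq\emptyset$. For (ii), I would first pass to the subspace $[C]=\overline{\operatorname{span}}\,C$, which is separable because $C$ is; since the generating space $X_e=C_e\cap(-C_e)$ and its gauge $\|\cdot\|_e$ are computed entirely inside $[C]$, there is no loss in assuming $[C]=X$, in which case $C_{N,[C]}=C_N$ and $e\in C_N$. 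Theorem \ref{3.6} then guarantees that $(X_e,\|\cdot\|_e)$ is a Banach space whose closed unit ball is $B_e=(C-e)\cap(e-C)$.

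The isometric embedding into $\ell_\infty$ will be the coordinate map $T:X_e\to\ell_\infty$, $Tx=(f_n(x))_n$, for a suitable sequence $(f_n)$ in the dual; $T$ is a linear isometry onto a (necessarily closed, by completeness of $X_e$) subspace of $\ell_\infty$ precisely when $|f_n(x)|\le\|x\|_e$ for all $n$ and $\|x\|_e=\sup_n|f_n(x)|$ for every $x\in X_e$. To set up the computation I would record that $\|x\|_e=\max\{q(x),q(-x)\}$, where $q$ is the Minkowski gauge of $C-e$, and that since $C-e$ is closed one has the radial description
\[
q(x)=\big(\sup\{t>0:\,e+tx\in C\}\big)^{-1}.
\]

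The crux is manufacturing a \emph{countable} norming family, and this is where separability of $C$ enters decisively. Because $X$ is separable it is second countable, so the open set $X\setminus C$ is Lindel\"of; strictly separating each of its points from the closed convex set $C$ by Hahn--Banach and extracting a countable subcover yields functionals $g_n\in X^*$ and scalars $\alpha_n$ with $\sup_C g_n<\alpha_n$ and $C=\bigcap_n\{g_n\le\alpha_n\}$. Setting $\beta_n:=\alpha_n-g_n(e)$, the strict inequality $\alpha_n>\sup_C g_n\ge g_n(e)$ gives $\beta_n>0$. Now $e+tx\in C$ iff $t\,g_n(x)\le\beta_n$ for all $n$, and since $C$ is \emph{bounded} the ray $\{e+tx:t>0\}$ must eventually leave $C$ for $x\neq0$, so some $g_n(x)>0$; feeding this into the radial formula gives $q(x)=\sup_n g_n(x)/\beta_n=\sup_n f_n(x)$ with $f_n:=g_n/\beta_n$. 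Using $\max\{\sup_n a_n,\sup_n b_n\}=\sup_n\max\{a_n,b_n\}$ I then obtain $\|x\|_e=\max\{q(x),q(-x)\}=\sup_n|f_n(x)|$, while $f_n(x)\le q(x)\le\|x\|_e$ and $-f_n(x)\le q(-x)\le\|x\|_e$ show $|f_n(x)|\le\|x\|_e$, i.e. $\|f_n\|_{(X_e)^*}\le1$. Thus $T$ is the desired isometry.

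I expect the main obstacle to be exactly the passage from the abstract norming principle to a \emph{countable} family: a priori $(X_e,\|\cdot\|_e)$ need not be separable (its image will be a generally nonseparable subspace of $\ell_\infty$), so the usual ``separable space $\Rightarrow$ countable norming set'' shortcut is unavailable. The Lindel\"of countable half-space representation of $C$ circumvents this by importing countability from the separability of the \emph{ambient} space rather than of $X_e$. Two supporting points deserve care in the write-up: boundedness of $C$ must be invoked both to make $t^+(x)$ finite (so that $q$ is a genuine, finite, positive gauge) and to ensure $\{g_n\}$ is total, so that $\|\cdot\|_e$ is a norm and not merely a seminorm; and the reduction to $[C]=X$ must be justified so that $e\in C_{N,[C]}$ legitimately becomes the hypothesis $e\in C_N$ needed to invoke Theorem \ref{3.6}.
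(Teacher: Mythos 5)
Your argument is correct, and part (i) agrees with the paper's in substance (the paper simply invokes Lemma \ref{2.3}~ii) inside $[C]$; your route through $C_{NP}=C_{N,[C]}$ and Lemma \ref{2.3}~vi) is equally valid). For part (ii) you arrive at the same endgame as the paper --- the coordinate map $Tx=(f_n(x))_n$ built from a countable family satisfying $\|x\|_e=\sup_n|f_n(x)|$, with closedness of the range following from completeness of $(X_e,\|\cdot\|_e)$ --- but you manufacture the countable norming family by a genuinely different mechanism. The paper regards $f=\|\cdot\|_e$ as a lower semicontinuous sublinear functional on all of $X$ (the gauge of the symmetric set $(C-e)\cap(e-C)$), applies the Br{\o}ndsted--Rockafellar theorem to write $f=\sup_{x^*\in D}\langle x^*,\cdot\rangle$ over the range $D$ of the subdifferential map $\partial f$, and then uses the fact that every subset of the dual of a separable space is $w^*$-separable to extract a countable $w^*$-dense subfamily of $D$, whose members automatically lie in $S_{X^*_e}$. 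You instead use second countability of $X$ to make $X\setminus C$ Lindel\"of, write $C$ as a countable intersection of half-spaces $\{g_n\le\alpha_n\}$ via Hahn--Banach separation, and compute the gauge radially as $\sup_n g_n(\cdot)/\beta_n$; boundedness of $C$ is what forces some $g_n(x)>0$ for $x\ne0$ and hence finiteness and positivity of the gauge. Both proofs import countability from separability of the ambient space $[C]$ rather than of $X_e$ (which, as you correctly observe, is generally nonseparable), so the essential insight is shared; your version is more elementary --- it avoids subdifferentials and Br{\o}ndsted--Rockafellar entirely and makes the role of boundedness of $C$ explicit --- at the cost of the extra bookkeeping of splitting $\|\cdot\|_e$ into $\max\{q(x),q(-x)\}$, whereas the paper's functionals norm the symmetric unit ball directly.
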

\begin{proof}
i)  This is just Lemma \ref{2.3} ii).

ii)  Without loss of generality, we assume $[C]=X$. Therefore, $X$ is separable and every  subset $A$ of $X^*$ is $w^*$-separable, i.e., $A$ is separable in in the weak-star topology $w^*$ of $X^*$.  By Theorem \ref{3.6}, $\|\cdot\|_e$ is stronger that $\|\cdot\|$ on $X_e$  and $X_e$ is $\|\cdot\|$-dense in $X$. Therefore, $X^*\subset X^*_e$.

Since $(C-e)\bigcap(e-C)$ is a closed bounded convex subset of $X$, $f\equiv\|\cdot\|_e$ acting as an extended real-valued  Minkowski functional generated by $(C-e)\bigcap(e-C)$ defined on $X$ is lower semicontinuous with its effective domain \[{\rm dom}(f)=X_e=\bigcup_{n=1}^\infty n((C-e)\bigcap(e-C)).\]
Note that
\[x^*\in\partial f(x)\Longleftrightarrow \langle x^*,z\rangle \leq f(z)\;{\rm for\;all}\;z\in X\;{\rm with\;} \langle x^*,x\rangle=\|x\|_e,\]
and that
\[x^*\in\partial \|x\|_e\Longleftrightarrow \langle x^*,z\rangle \leq f(z)\;{\rm for\;all}\;z\in X_e\;{\rm with\;} \langle x^*,x\rangle=\|x\|_e.\]
Then we see \begin{equation}\label{3.6'}\partial f(x)\subset\partial\|x\|_e\subset S_{X^*_e},\;\;{\rm for\;all\;}x\in X_e\setminus\{0\},\end{equation}
where $S_{X^*_e}$ is the unit sphere of $X^*_e$.

We denote by $D=\bigcup_{x\in X\setminus\{0\}}\partial f(x)\subset X^*$, the range of the subdifferential mapping $\partial f$ of $F$. Then by the Brondsted-Rockafellar theorem \cite[Theorem 3.17]{ph},
we obtain \[f(x)=\sup_{x^*\in D}\langle x^*,x\rangle,\;\;{\rm for\;all\;}x\in X.\]
Note the restriction of $f$ to $X_e$ is just $\|\cdot\|_e$.
 $w^*$-separability of $D$ entails that there is a $w^*$-dense sequence $\{x_n^*\}$ of $D$ so that
 \[f(x)=\sup_{n\in \mathbb N}\langle x_n^*,x\rangle=\|x\|_e,\;\;{\rm for\;all\;}x\in X_e.\]
 Finally, we define $T: X_e\rightarrow\ell_\infty$ by
 \[Tx=(\langle x^*_1,x\rangle, \langle x^*_2,x\rangle,\cdots,\langle x^*_n,x\rangle,\cdots),\;x\in X_e. \]
 Clearly, $T: X_e\rightarrow\ell_\infty$ is a linear isometry. \qed
\end{proof}

Let $P$ be a reproducing cone containing no nontrivial subspaces with vertex at the origin of a real Banach space $X$.  Then there is an order on $X$ induced by $P$:
\begin{equation}\label{3.7} x\geq y\;\Longleftrightarrow x-y\in P,\;\forall x,y\in X. \end{equation}
If, in addition, $P$ satisfies that $P\bigcap{-P}=\{0\}$, then
\begin{equation}\label{3.8}
x\in X,\;\exists\; y\in P\;{\rm such \;that}\;\;-ty\leq x\leq ty,\;\forall t>0\;\Longrightarrow x=0.
\end{equation}
The following theorem states that in particular, if $C$ is an almost reproducing cone with vertex at the origin containing no nontrivial  subspaces, then the generating space can  be induced the ``cone order".

\begin{theorem}\label{3.9} Suppose that $C\subset X$ is a closed almost reproducing cone with vertex at the origin containing no nontrivial  subspaces of $X$,  $X$ is ordered by $C$ defined as (\ref{3.7}), and that $C_N\neq\emptyset$.
Let $u\in C_N$, $X_u=C_u\bigcap (-C_u)$, and   $\|\cdot\|_u$ is defined for $x\in X_u$ by
\begin{equation}\label{3.10}\|x\|_u=\inf\{\lambda>0:-\lambda u\leq x\leq\lambda u\}\equiv\inf\{\lambda>0:x\in \lambda[-u,u]\}.\end{equation}
Then\par

i) Regarding as a subspace of $X$, $X_u$ is dense in $X$;\par
ii) $\|\cdot\|_u$ is a norm on $X_u$, and the norm topology generated by it  is stronger than the topology generated  by $\|\cdot\|$ on $X_u$;\par
iii) The closed unit ball $[-u,u]$ of $(X_u,\|\cdot\|_u)$ is just $(C-u)\bigcap(u-C)$;\par
iv) The new norm $\|.\|_u$ is a lower semicontinuous function in $X$ with its essential domain dom$\|\cdot\|_u=X_u$;\par
v)  $(X_u,\|\cdot\|_u)$ is a Banach space.
\end{theorem}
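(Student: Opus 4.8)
The plan is to recognize Theorem \ref{3.9} as the ``cone version'' of Theorem \ref{3.6} and to deduce it from the latter. Set $D:=(C-u)\bigcap(u-C)$. Since $x\geq-\lambda u$ and $x\leq\lambda u$ mean $x+\lambda u\in C$ and $\lambda u-x\in C$, and since $C$ is a cone (so $\lambda C=C$ for $\lambda>0$), the order interval $\lambda[-u,u]$ coincides with $\lambda D$; hence $\|\cdot\|_u$ is exactly the Minkowski gauge of the symmetric closed convex set $D$, and $X_u=\bigcup_{\lambda>0}\lambda D$. Consequently, once $D$ is shown to be a \emph{closed bounded} convex set with $0\in D_N$, Theorem \ref{3.6} applied to $D$ at the non-support point $e=0$ yields (ii), (iii) and (v) verbatim, because there $(D-0)\bigcap(0-D)=D\bigcap(-D)=D$ (as $D$ is symmetric) and the associated gauge is precisely $\|\cdot\|_u$.

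First I would dispose of the parts that do not use boundedness. Almost reproducibility gives $X=\overline{C-C}\subset[C]$, hence $[C]=X$. Since proper support points are support points, $C_{PS}\subset C_S$, so $C_N\subset C_{NP}$; in particular $u\in C_{NP}$, and Lemma \ref{3.4} applies, yielding both the identity $X_u=\bigcup_{\lambda>0}\lambda D$ and the density of $X_u$ in $X$, which is (i). Density of $X_u=D_0:=\bigcup_{\lambda>0}\lambda(D-0)$ in $X=[D]$ also shows $0\in D_N$ via Lemma \ref{2.3}(iv). The gauge $\|\cdot\|_u$ of the symmetric convex set $D$, absorbing in $X_u$, is a seminorm finite on $X_u$; its definiteness is exactly the implication (\ref{3.8}), since $\|x\|_u=0$ forces $-\lambda u\leq x\leq\lambda u$ for all $\lambda>0$, whence $x=0$ by $C\bigcap(-C)=\{0\}$. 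This gives the norm part of (ii). As $D$ is norm-closed (because $C$ is), Proposition \ref{2.9}(ii) shows $\|\cdot\|_u$ is lower semicontinuous on $X$ with ${\rm dom}\,\|\cdot\|_u=\bigcup_{\lambda>0}\lambda D=X_u$, proving (iv); the same closedness identifies the closed unit ball $\{x:\|x\|_u\leq1\}$ with $D=(C-u)\bigcap(u-C)$, proving (iii).

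The main obstacle is the rest of (ii) together with all of (v), both of which collapse to a single fact: \emph{the order interval $D=(C-u)\bigcap(u-C)$ is $\|\cdot\|$-bounded.} Indeed, ``$\|\cdot\|_u$-topology stronger than $\|\cdot\|$-topology'' is equivalent to norm-boundedness of the unit ball $D$, and completeness of $(X_u,\|\cdot\|_u)$ then follows exactly as in Theorem \ref{3.6}, since a $\|\cdot\|$-closed, bounded, symmetric, convex, absorbing set generates a complete gauge norm. This boundedness is the cone-theoretic counterpart of the hypothesis ``$C$ bounded'' in Theorem \ref{3.6}; it is a normality-type property of the cone $C$ (order-bounded sets being norm-bounded). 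I would try to extract it from the standing hypotheses by combining the lower semicontinuity established in (iv) with a Baire-category argument on the decomposition $X_u=\bigcup_n nD$, but this is the delicate point: without a normality-type assumption the boundedness can genuinely fail, so this is where the real work lies. I would therefore either derive normality of $C$ or record it as the operative hypothesis; it holds automatically for the positive cones of the concrete spaces ($\ell_p$, $L_p$, $C(K)$, and separable Banach lattices) treated in later sections, where order intervals are norm-bounded, so that Theorem \ref{3.6} applies to $D$ and delivers (ii), (iii) and (v).
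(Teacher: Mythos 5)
Your reduction, and your handling of (i), (iii), (iv) and the norm axioms in (ii), follow essentially the same route as the paper: density of $X_u$ via Lemma \ref{3.4}, the identification $[-u,u]=(C-u)\bigcap(u-C)$ by unwinding the order, closedness of the order interval giving both the lower semicontinuity in (iv) (via Proposition \ref{2.9}) and the unit-ball statement (iii), and definiteness of the gauge from (\ref{3.8}). The one point you could not settle --- norm-boundedness of $[-u,u]$, which is exactly what ``the $\|\cdot\|_u$-topology is stronger'' means and which feeds the completeness argument in (v) --- is also the weak point of the paper's own proof. There the entire justification is the sentence ``By (\ref{3.8}), it is easy to observe that $[-u,u]$ is bounded closed in $X$.'' But (\ref{3.8}) only says $\bigcap_{t>0}t[-u,u]=\{0\}$, i.e.\ that the gauge is definite; it gives no control whatsoever on the diameter of $[-u,u]$. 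So your instinct that a normality-type property is genuinely needed is correct, and the paper does not supply it.

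In fact the extra hypothesis cannot be derived from the stated ones. Take $X=C^1[0,1]$ with $\|f\|=\|f\|_\infty+\|f'\|_\infty$ and $C=\{f\in X: f\geq 0\}$. This $C$ is closed, reproducing, satisfies $C\bigcap(-C)=\{0\}$, and $u\equiv 1$ lies in ${\rm int}\,C\subset C_N$. Here $[-u,u]=\{f\in X:\|f\|_\infty\leq 1\}$ is unbounded in $X$ (consider $f_n(t)=n^{-1}\sin(n^2t)$, with $\|f_n\|_\infty\leq 1$ but $\|f_n\|\to\infty$), $X_u=X$, and $\|\cdot\|_u=\|\cdot\|_\infty$; thus the $\|\cdot\|_u$-topology is strictly weaker than the original one and $(X_u,\|\cdot\|_u)$ is incomplete, so (ii) and (v) both fail. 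Consequently the Baire-category fallback you mention cannot succeed either (it would require completeness of $(X_u,\|\cdot\|)$, which is precisely what is missing), and the correct repair is the one you name: assume normality of $C$ (order intervals norm-bounded), after which $[-u,u]$ is a closed, bounded, symmetric, convex, absorbing subset of $X_u$ and your reduction to Theorem \ref{3.6} (or the paper's direct argument) delivers (ii) and (v). This hypothesis does hold in all the concrete settings used later in the paper (Banach lattices, $\ell_p$, $L_p$, $C(K)$), so the downstream applications are unaffected, but as stated the theorem needs the added assumption.
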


\begin{proof}
i) It follows immediately from  Lemma \ref{3.4}.\par
ii) We first show that $[-u,u]$ is an absorbing set of $X_u$. Given $x\in X_u$, by (\ref{3.5}) of Lemma \ref{3.4}, there exist $\lambda>0,\;c_j\in C,\;j=1,2$ such that
\[-\lambda u\leq\lambda(c_1-u)=x=\lambda(u-c_2)\leq\lambda u.\]
Hence, $x\in\lambda[-u,u]$, and this says that $[-u,u]$ is an absorbing set of $X_u$. By (\ref{3.8}), it is easy to observe that $[-u,u]$ is bounded closed in $X$.
Since $[-u,u]$ is convex and symmetric, $\|\cdot\|_u$ is a norm on $X_u$, and the new norm topology is stronger than the topology generated  by $\|\cdot\|$ on $X_u$.
Therefore, ii) has been shown.\par
iii) It suffices to note that \begin{align*}x\in (C-u)\bigcap(u-C)&\Longleftrightarrow\;\exists\;c_1,c_2\in C\;{\rm such\;that}\;c_1-u=x=u-c_2\\
&\Longleftrightarrow\;-u\leq x\leq u\Longleftrightarrow\;x\in[-u,u].
\end{align*}

iv) Note $\bigcup_{\lambda>0}\lambda[-u,u]=X_u$. It follows from that  $[-u,u]$ is a closed bounded symmetrically convex set and that $\|\cdot\|_u$ is just the Minkowski functional generated by $[-u,u]$.

v) It follows from that  $[-u,u]$ is complete in $(X,\|\cdot\|)$ and ii). \qed
\end{proof}
\begin{theorem}[Equivalence theorem]\label{3.11}
 Suppose that $C\subset X$ is a closed reproducing cone  containing no nontrivial affine subspaces of $X$,  $X$ is ordered by $C$ defined as (\ref{3.7}), and that $C_N\neq\emptyset$.
Let $u,v\in C_N$,  $X_u=C_u\bigcap (-C_u)$, $X_v=C_v\bigcap (-C_v)$, $\|\cdot\|_u$  and   $\|\cdot\|_v$ are defined  by
\[\|x\|_u=\inf\{\lambda>0:x\in \lambda[-u,u]\},\;x\in X_u,\]
and
\[\|x\|_v=\inf\{\lambda>0:x\in \lambda[-v,v]\},\;x\in X_v.\]
Then the following statements are equivalent.\par

i) $X_u=X_v$ algebraically;

ii) $u\sim v$, i.e., there is a constant $c\geq1$ such that  $c^{-1}u\leq v\leq cu$;

iii) $\|\cdot\|_u\sim\|\cdot\|_v$, i.e., there is a constant $c\geq1$ such that  \[c^{-1}\|\cdot\|_u\leq\|\cdot\|_v\leq c\|\cdot\|_u.\]
\end{theorem}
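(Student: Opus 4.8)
The plan is to prove the three conditions equivalent by running the cycle (i)$\Rightarrow$(ii)$\Rightarrow$(iii)$\Rightarrow$(i), working throughout with the order arithmetic induced by $C$ together with the description of the generating spaces from Theorem \ref{3.9}. The facts I would invoke repeatedly are: by Theorem \ref{3.9}(iii) the closed unit ball of $(X_u,\|\cdot\|_u)$ is exactly $[-u,u]=(C-u)\cap(u-C)$, so that $X_u=\mathrm{dom}\,\|\cdot\|_u=\bigcup_{\lambda>0}\lambda[-u,u]$ and $x\in\lambda[-u,u]\iff -\lambda u\le x\le\lambda u$; that $C$ is closed under addition and under multiplication by nonnegative scalars, so in particular $s\mapsto su$ is $\le$-increasing for $s\ge0$ (because $u\in C$); and that pointedness $C\cap(-C)=\{0\}$ (the absence of nontrivial subspaces) makes $\le$ a genuine partial order and $\|\cdot\|_u,\|\cdot\|_v$ genuine norms. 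I would also regard $\|\cdot\|_u$ and $\|\cdot\|_v$ as extended-real lower semicontinuous functions on all of $X$ (Theorem \ref{3.9}(iv)), with value $+\infty$ off $X_u$, resp. $X_v$; with this convention the inequalities asserted in (iii) are understood to hold for every $x\in X$.

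For (i)$\Rightarrow$(ii): since $v\in X_v=X_u$, there is $\lambda>0$ with $v\in\lambda[-u,u]$, i.e. $v\le\lambda u$; symmetrically $u\in X_u=X_v$ yields $\mu>0$ with $u\le\mu v$, equivalently $\mu^{-1}u\le v$. Setting $c=\max\{\lambda,\mu,1\}\ge1$ and using monotonicity of $s\mapsto su$, I obtain $c^{-1}u\le\mu^{-1}u\le v\le\lambda u\le cu$, which is (ii). For (ii)$\Rightarrow$(iii): from $v\le cu$ I would check the ball inclusion $[-v,v]\subseteq c[-u,u]$, and from $c^{-1}u\le v$ (equivalently $u\le cv$) the inclusion $[-u,u]\subseteq c[-v,v]$; each is a one-line computation, e.g. for $x\in[-v,v]$ one writes $cu-x=(cu-v)+(v-x)\in C$ and $x+cu=(cu-v)+(v+x)\in C$, so $-cu\le x\le cu$. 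Passing to Minkowski functionals, $[-v,v]\subseteq c[-u,u]$ gives $\|x\|_u\le c\|x\|_v$ and $[-u,u]\subseteq c[-v,v]$ gives $\|x\|_v\le c\|x\|_u$, which together are precisely $c^{-1}\|x\|_u\le\|x\|_v\le c\|x\|_u$. Finally, (iii)$\Rightarrow$(i) is read off from the extended-real convention: $\|x\|_v\le c\|x\|_u$ forces $\mathrm{dom}\,\|\cdot\|_u\subseteq\mathrm{dom}\,\|\cdot\|_v$, i.e. $X_u\subseteq X_v$, while $c^{-1}\|x\|_u\le\|x\|_v$ gives the reverse inclusion, so $X_u=X_v$ and the cycle closes.

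All the computations are short, and the main obstacle is conceptual rather than technical. The step I would be most careful about is the passage in (i)$\Rightarrow$(ii) from the purely set-theoretic equality $X_u=X_v$ to the quantitative comparability $u\sim v$: the bridge is the identity $X_u=\bigcup_{\lambda>0}\lambda[-u,u]$ from Theorem \ref{3.9}, which says that membership of a single vector in $X_u$ already encodes a two-sided order bound against $u$; applying this to the two distinguished vectors $u$ and $v$ is exactly what converts equality of domains into order comparability, and choosing $c=\max\{\lambda,\mu,1\}$ keeps $c\ge1$ as required. I would also flag explicitly the interpretive convention on (iii): the norm equivalence must be read on the common algebraic domain (equivalently, via the extended-real lower semicontinuous functionals of Theorem \ref{3.9}(iv)), since otherwise the inequalities in (iii) are not even well posed; once this is fixed, (iii)$\Rightarrow$(i) becomes automatic and no separate argument is needed.
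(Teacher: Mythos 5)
Your proof is correct, but the key implication is handled by a genuinely different and more elementary argument than the paper's. The paper closes the cycle as i)~$\Rightarrow$~iii)~$\Rightarrow$~i) together with ii)~$\Leftrightarrow$~iii): for i)~$\Rightarrow$~iii) it observes that $[-u,u]$, being $\|\cdot\|$-closed and bounded, is complete for $\|\cdot\|_v$, writes $X_v=\bigcup_n n[-u,u]$, and invokes Baire's category theorem in the Banach space $(X_v,\|\cdot\|_v)$ to conclude $0\in\mathrm{int}_{\|\cdot\|_v}[-u,u]$, which yields one of the two norm inequalities; the reverse inequality is obtained symmetrically, and ii)~$\Leftrightarrow$~iii) is the same ball-inclusion computation you perform. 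You instead go i)~$\Rightarrow$~ii) directly by testing the two distinguished vectors against each other's unit balls: $v\in X_u$ gives $v\le\lambda u$ and $u\in X_v$ gives $\mu^{-1}u\le v$, so $c=\max\{\lambda,\mu,1\}$ works. This bypasses completeness and category arguments entirely, and in fact uses only the weaker hypothesis $u\in X_v$ and $v\in X_u$ rather than full equality of the spaces --- it exploits the fact that for an order cone the generating set $[-u,u]$ has a canonical ``order unit'' to test against, which is exactly what is unavailable for the generating spaces of general convex sets (where a Baire-type argument as in the paper would be the natural fallback). Your explicit flagging of the extended-real convention needed to make iii) well posed, and to make iii)~$\Rightarrow$~i) automatic, is a point the paper leaves implicit. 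Both proofs are sound; yours is shorter and more self-contained for the cone setting.
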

\begin{proof}

i) $\Longrightarrow$ iii). Suppose that $X_u=X_v$. By Theorem \ref{3.9}, the closed unit ball $[-u,u]$ of $(X_u,\|\cdot\|_u)$ is $\|\cdot\|_v$-complete. Indeed, since $[-u,u]$ is $\|\cdot\|$-complete, and since the norm $\|\cdot\|_v$-topology is stronger than the original norm $\|\cdot\|$-topology on $X_v (=X_u)$, $[-u,u]$ is necessarily $\|\cdot\|_v$-complete.
Note that $X_v=\bigcup n[-u,u]$ endowed with $\|\cdot\|_v$ is a Banach space. Then by Baire's category theorem, $0\in{\rm int}_{\|\cdot\|_v}[-u,u]$. Therefore, there is a constant $a>0$
such that $[-v,v]\subset a[-u,u]$, or, equivalently, $\|\cdot\|_v\geq a^{-1}\|\cdot\|_u$. We can show that there is a constant $b>0$ such that  $\|\cdot\|_u\geq b^{-1}\|\cdot\|_u$ in the same way. Thus, we finish the proof of ``i) $\Longrightarrow$ iii)" by taking $c=\max\{a,b,1\}$.

iii) $\Longrightarrow$ i). It follows directly from  \[c^{-1}\|\cdot\|_u\leq\|\cdot\|_v\leq c\|\cdot\|_u\]
that $X_u=X_v$.

ii) $\Longleftrightarrow$ iii). It suffices to note that ii) $c^{-1}u\leq v\leq cu$ if and only if
\[c^{-1}[-u,u]\subset[-v,v]\subset c[-u,u],\]
which is equivalent to iii)
\[c^{-1}\|\cdot\|_u\leq\|\cdot\|_v\leq c\|\cdot\|_u. \qed\]
\end{proof}
\section{Examples of $C$-generating spaces}

For a closed convex set $C$ of a Banach space $X$ with $C_N\neq\emptyset$, and for each $e\in C_N$, by Theorem \ref{3.6}, $(X_e,\|\cdot\|_e)$ is a Banach space but  $(X_e,\|\cdot\|)$ is a dense subspace of $X$. This means that $X_e$ is algebraically smaller. The following examples will show that, usually, $(X_e,\|\cdot\|_e)$ is  bigger topologically, unless ${\rm int}C\neq\emptyset$.

\begin{example}\label{4.1} Let $(\Omega,\sum,\mu)$ be a probability space, i.e., $\mu(\Omega)=1$. In particular, $\Omega=[0,1]$, $\sum$ is the Borel $\sigma$-algebra  of $[0,1]$, and $\mu$ is the Lebesgue measure. Given $1\leq p<\infty$, we consider $X=L_p(\mu)$. Then it is easy to see that the constant function $u=1$ a nonsupport point of the positive cone $L_p^+(\mu)$.
Since \[[-u,u]=\{f\in L_p(\mu): -1\leq f(\omega)\leq1\;\;{\rm for\;almost\;all\;}\omega\in\Omega\},\]
Therefore, $X_u=L_\infty(\mu)$.\\
\;
If $p=\infty$, then $u=1\in{\rm int}(L_\infty^+(\mu))$ is just the ``generating unit element" of $L_\infty(\mu)$. Therefore, $X_u=X$.
\end{example}

\begin{example}\label{4.1'}
We consider $X=\ell_1$. Let $\{e_n\}$ be the standard unit vector basis of $\ell_1$ and $u=(\frac{1}{2^j})_{j=1}^\infty$. Then $X_u\cong\ell_\infty$. Indeed, it is clear that $u\in (\ell_1^+)_N$ with $\|u\|=1$ and
\[[-u,u]=\{x\in\ell_1:-\frac{1}{2^n}\leq x(n)\leq\frac{1}{2^n}\;\;{\rm for\;all\;}n\in\mathbb N\}.\]

The set ${\rm ext}[-u,u]$ of all extreme points of $[-u,u]$ satisfies
\[{\rm ext}[-u,u]=\big\{(\pm 2^{-1},\pm 2^{-2},\cdots,\pm 2^{-n},\cdots)\big\}.\]
Therefore,
\[\|(\pm 2^{-1},\pm 2^{-2},\cdots,\pm 2^{-n},\cdots)\|_u=1.\]
It also follows that $\|\cdot\|_u$ is monotone non-decreasing, i.e., for every pair of  sequences $\{a_j\}_{j=1}^\infty, \{b_j\}_{j=1}^\infty\subset\mathbb R$
\[\mid a_j\mid \geq \mid b_j\mid , j=1,2,\cdots,n \;\Longrightarrow \|\sum_{j=1}^na_je_j\|_u\geq\|\sum_{j=1}^nb_je_j\|_u,\]
and
\[\|e_n\|_u=2^n,\;n=1,2,\cdots.\]
We defined a linear operator $T: X_u\rightarrow\ell_\infty$ for $x=(x(n))_{n=1}^\infty\in X_u$ by
\[T(x)=(2x(1),2^2x(2),\cdots,2^nx(n),\cdots).\]
Then \[T(\pm 2^{-1},\pm 2^{-2},\cdots,\pm 2^{-n},\cdots)=(\pm1,\pm1,\cdots,\pm1,\cdots).\]
Therefore,
\[T\big({\rm ext}[-u,u]\big)={\rm ext}B_{\ell_\infty}.\]
Consequently,
\[T(B_{X_u})=B_{\ell_\infty},\]
This says that $T: X_u\rightarrow\ell_\infty$ is a surjective linear isometry.
\end{example}

\section{A congruence theorem }
In this section, we focus on generating spaces of Banach spaces with unconditional bases.
\begin{lemma}\label{5.1} Let $X$ be a Banach space with a normalized 1-unconditional basis $\{e_n\}$, and $X^+$ be the positive cone of $X$ related to the basis, i.e.,
\[X^+=\{x=\sum_na_ne_n\in X: a_n\geq0\;{\rm for\;all\;}n\in\mathbb N\}.\]
Then

i) $(X^+)_N=\{x=\sum_na_ne_n\in X: a_n>0\;{\rm for\;all\;}n\in\mathbb N\}$.

ii) For all $u\in (X^+)_N$, $(X_u,\|\cdot\|_u)$ is isometrically isomorphic to $\ell_\infty$.

\end{lemma}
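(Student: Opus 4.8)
My plan is to treat the two assertions separately, using the biorthogonal (coordinate) functionals $\{e_n^*\}$ of the basis for part (i) and the explicit description of $\|\cdot\|_u$ from Theorem \ref{3.9} for part (ii).

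For (i) I would characterize support points through the coordinate functionals. Write $x=\sum_n a_n e_n\in X^+$, so $a_n\geq 0$ for all $n$. If some coordinate vanishes, say $a_k=0$, then $x^*=-e_k^*$ is nonzero and satisfies $\langle x^*,y\rangle=-\langle e_k^*,y\rangle\leq 0=\langle x^*,x\rangle$ for every $y\in X^+$ (coordinates of elements of $X^+$ being nonnegative); hence $x$ is a support point. Conversely, suppose $a_n>0$ for all $n$ and that $x$ were supported by some $x^*\neq 0$. Since $X^+$ is a cone containing $0$, the quantity $\sup_{y\in X^+}\langle x^*,y\rangle$ is finite only if $\langle x^*,y\rangle\leq 0$ on $X^+$, in which case it equals $0$ and is attained at $x$; thus $\langle x^*,e_n\rangle\leq 0$ for every $n$ and $\langle x^*,x\rangle=\sum_n a_n\langle x^*,e_n\rangle=0$. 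As each summand is $\leq 0$ while $a_n>0$, every $\langle x^*,e_n\rangle=0$, forcing $x^*=0$ by totality of $\{e_n^*\}$, a contradiction. This yields the claimed description of $(X^+)_N$.

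For (ii) I would first verify that $X^+$ meets the hypotheses of Theorem \ref{3.9}: it is closed (an intersection of the closed half-spaces $\{\langle e_n^*,\cdot\rangle\geq 0\}$), reproducing ($1$-unconditionality lets one split $x=\sum_n a_n e_n$ into its positive and negative coordinate parts, both lying in $X$ since their coordinates are dominated by $|a_n|$), and pointed, i.e. $X^+\cap(-X^+)=\{0\}$; together with (i) this gives $(X^+)_N\neq\emptyset$. Fixing $u=\sum_n u_n e_n$ with all $u_n>0$, Theorem \ref{3.9} gives $X_u=\bigcup_{\lambda>0}\lambda[-u,u]$ with $\|x\|_u=\inf\{\lambda>0:-\lambda u\leq x\leq\lambda u\}$. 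Since the order is coordinatewise, $-\lambda u\leq x\leq\lambda u$ is equivalent to $|x_n|\leq\lambda u_n$ for all $n$, whence $X_u=\{x\in X:\sup_n|x_n|/u_n<\infty\}$ and $\|x\|_u=\sup_n|x_n|/u_n$. I would then define $T:X_u\rightarrow\ell_\infty$ by $Tx=(x_n/u_n)_{n=1}^\infty$; it is linear, and the formula for $\|\cdot\|_u$ shows at once that $\|Tx\|_\infty=\|x\|_u$, so $T$ is an isometry into $\ell_\infty$.

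The crux is surjectivity of $T$, and this is where $1$-unconditionality does the real work. Given $t=(t_n)\in\ell_\infty$ with $M=\|t\|_\infty>0$ (the case $t=0$ being trivial), the sequence $\theta_n=t_n/M$ satisfies $|\theta_n|\leq 1$, so the multiplier operator $\sum_n a_n e_n\mapsto\sum_n\theta_n a_n e_n$ is well defined and of norm $\leq 1$ on $X$, which is precisely the defining property of a $1$-unconditional basis. Applying it to $Mu\in X$ produces $x:=\sum_n t_n u_n e_n\in X$; by uniqueness of basis expansions its coordinates are $x_n=t_n u_n$, so $\sup_n|x_n|/u_n=\|t\|_\infty<\infty$, giving $x\in X_u$ and $Tx=t$. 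Hence $T$ is a surjective linear isometry and $(X_u,\|\cdot\|_u)\cong\ell_\infty$. I expect the only genuinely delicate point to be justifying that $\sum_n t_n u_n e_n$ converges in $X$, i.e. invoking the bounded-multiplier characterization of $1$-unconditional bases, whereas the isometry identity and the coordinatewise reformulation of the order are routine.
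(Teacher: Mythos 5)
Your proposal is correct, and while it arrives at the same two structural conclusions as the paper (the identification of $(X^+)_N$ with the strictly positive elements, and the diagonal map $Tx=(a_n^{-1}x(n))_n$ as the isometry), the two nontrivial verifications are carried out differently. For part (i), the paper proves the non-support direction by showing that the generated cone $C_u$ contains all finitely supported vectors (hence is dense) and then invoking Lemma \ref{2.3} iv); you instead argue directly that any functional supporting $X^+$ at a strictly positive $x$ must be nonpositive on $X^+$, vanish at $x$, hence annihilate every $e_n$ and be zero. Both are valid; yours is self-contained, the paper's reuses the density criterion it has already set up. For part (ii), the paper's surjectivity argument goes through extreme points: it computes ${\rm ext}[-u,u]$, observes $T({\rm ext}[-u,u])={\rm ext}B_{\ell_\infty}$, and concludes $T(B_{X_u})=B_{\ell_\infty}$ --- a step that is not fully justified as written, since a linear map carrying one extreme-point set onto another need not carry the balls onto each other without some compactness or order-theoretic input. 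Your route via the bounded-multiplier characterization of $1$-unconditional bases (applying the norm-one multiplier $(t_n/\|t\|_\infty)$ to $\|t\|_\infty u$ to produce a preimage of $t$) closes exactly this gap and is the more rigorous of the two; you also correctly isolate it as the one genuinely delicate point. Your preliminary verification that $X^+$ is closed, reproducing and pointed, so that Theorem \ref{3.9} applies, is a detail the paper leaves implicit.
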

\begin{proof} We write \[X_{00}=\big\{x=(x(n))\in X,\;{\rm supp}x=\{n\in\mathbb N, x(n)\neq0\}\;{\rm is\;a\;finite\;set}\big\}.\]
Then it is a dense subspace of $X$.

i) Suppose that $u=\sum_na_ne_n\in X$ with $a_n>0$ for all $n\in\mathbb N$. Then for any $x\in X_{00}$, Let $m=\max\{n\in{\rm supp}x\}$, and let $\alpha=\min\{a_n:n=1,2,\cdots,m\}$. Then there is $M>0$ such that
\[\mid x(j)\mid \leq M\alpha, \;j=1,2,\cdots,m.\]
Therefore, $x\in C_u$. Since $x\in X_{00}$ is arbitrary, it follows $X_{00}\subset C_u$. This and Lemma \ref{2.3} iv) imply that $u\in (X^+)_N$.  Conversely, suppose that $u=\sum_na_ne_n\in X^+$ with $a_j=0$ for some $j\in\mathbb N$. Let $e_j^*$ be the functional such that $\ker{e_j}^*=\overline{\rm span}\{e_1,e_2,\cdots,e_{j-1},e_j,\cdots\}$ and $\langle e_j^*,e_j\rangle=-1.$
Then \[0=\langle e_j^*,u\rangle=\max\{\langle e_j^*,z\rangle: z\in X^+\}. \]
Therefore, $u$ is a support point of  $X^+$.

ii) Let $u=\sum_na_ne_n\in X^+$ with $a_n>0$ for all $n\in\mathbb N$. Then
\[[-u,u]=\{x\in X: -a_n\leq x(n)\leq a_n\},\]
and
\[{\rm ext}[-u,u]=\{(\varepsilon_1 {a_1},\varepsilon_2 {a_2},\cdots,\varepsilon_n {a_n},\cdots):\varepsilon_j\in\{-1,1\},j=1,2,\cdots\}.\]
Let $T: X_u\rightarrow\ell_\infty$ for $x=(x(n))_{n=1}^\infty\in X_u$ be defined by
\[T(x)=({a_1}^{-1}x(1), {a_2}^{-1}x(2),\cdots,{a_n}^{-1}x(n),\cdots).\]
Then \[T(\varepsilon_1 {a_1},\varepsilon_2 {a_2},\cdots,\varepsilon_n {a_n},\cdots)=(\varepsilon_1,\varepsilon_2,\cdots,\varepsilon_n.\cdots),\]
$\varepsilon_j\in\{-1,1\},j=1,2,\cdots.$
Therefore,
\[T\big({\rm ext}[-u,u]\big)={\rm ext}B_{\ell_\infty}.\]
Consequently,
\[T(B_{X_u})=B_{\ell_\infty},\]
This says that $T: X_u\rightarrow\ell_\infty$ is a surjective linear isometry. \qed
\end{proof}

As a consequence of Lemma \ref{5.1}, we have the following result.
\begin{corollary}\label{5.2}
Let $1\leq p\leq\infty$, $\ell_p^+$ be the natural positive cone of $\ell_p$  and $u\in(\ell_p^+)_N$. Then
\[(X_u,\|\cdot\|_u)\cong\ell_\infty.\]
\end{corollary}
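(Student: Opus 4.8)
The plan is to split the argument according to whether $p$ is finite or $p=\infty$, since the two cases rest on genuinely different facts. For $1\le p<\infty$ the standard unit vector basis $\{e_n\}$ is a normalized $1$-unconditional basis of $\ell_p$, and the natural positive cone $\ell_p^+$ is precisely the basis positive cone $X^+=\{\sum_n a_n e_n:\ a_n\ge 0\ \text{for all }n\}$ figuring in Lemma \ref{5.1}. Consequently $(\ell_p^+)_N=(X^+)_N$, and for every $u\in(\ell_p^+)_N$ part ii) of Lemma \ref{5.1} yields at once that $(X_u,\|\cdot\|_u)$ is isometrically isomorphic to $\ell_\infty$. Thus for finite $p$ there is nothing to do beyond recording that the hypotheses of Lemma \ref{5.1} are met.

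The case $p=\infty$ requires a separate treatment, and this is where the main obstacle lies: $\ell_\infty$ is nonseparable and admits no Schauder basis, so Lemma \ref{5.1} cannot be invoked. The remedy is to exploit the special structure of nonsupport points in $\ell_\infty$. By Example \ref{2.5} i) one has $(\ell_\infty^+)_N=\mathrm{int}(\ell_\infty^+)$, so a nonsupport point $u$ lies in the interior of the positive cone. I would first verify that this forces $u$ to be bounded away from $0$: choosing $\varepsilon>0$ with $B(u,\varepsilon)\subset\ell_\infty^+$ and testing against $u-\tfrac{\varepsilon}{2}\mathbf 1$, where $\mathbf 1=(1,1,\dots)$, shows $u(n)\ge\tfrac{\varepsilon}{2}>0$ for all $n$. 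Writing $\delta=\inf_n u(n)>0$ and noting that $u$ is bounded, for any $x\in\ell_\infty$ one has $|x(n)|\le\|x\|_\infty\le(\|x\|_\infty/\delta)\,u(n)$, so $x\in(\|x\|_\infty/\delta)[-u,u]$; hence $X_u=\ell_\infty$ as a set.

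It then remains to identify the norm and exhibit the isometry. From (\ref{3.10}) one computes directly
\[\|x\|_u=\inf\{\lambda>0:\ |x(n)|\le\lambda u(n)\ \text{for all }n\}=\sup_{n}\frac{|x(n)|}{u(n)},\]
a weighted supremum norm. The diagonal scaling $T:X_u\to\ell_\infty$ defined by $Tx=\big(u(n)^{-1}x(n)\big)_{n}$ is then linear with $\|Tx\|_\infty=\sup_n|x(n)|/u(n)=\|x\|_u$, and it is surjective because for $y\in\ell_\infty$ the vector $(u(n)y(n))_n$ lies in $\ell_\infty$ (as $u$ is bounded) and maps to $y$. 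Thus $T$ is a surjective linear isometry and $(X_u,\|\cdot\|_u)\cong\ell_\infty$, completing the case $p=\infty$. Equivalently, one could observe that $u\in\mathrm{int}(\ell_\infty^+)$ already places us in the situation of Theorem \ref{3.6} iv), where $(X_u,\|\cdot\|_u)$ and $(X_u,\|\cdot\|)=\ell_\infty$ are isomorphic, and then upgrade that isomorphism to an isometry via the explicit $T$ above.
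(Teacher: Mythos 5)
Your proposal is correct and follows essentially the same route as the paper: for $1\le p<\infty$ it reduces to Lemma \ref{5.1} via the $1$-unconditional unit vector basis, and for $p=\infty$ it uses $(\ell_\infty^+)_N={\rm int}(\ell_\infty^+)$ together with the diagonal scaling $Tx=(u(n)^{-1}x(n))_n$. The only (harmless) difference is that you verify $T$ is an isometry by computing $\|x\|_u=\sup_n |x(n)|/u(n)$ directly, whereas the paper argues via the extreme points of $[-u,u]$; your direct computation is, if anything, the cleaner verification.
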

\begin{proof}
Since the natural unit vector basis $\{e_n\}$ of $\ell_p$ ($1\leq p<\infty$) is  an unconditional 1-basis of $\ell_p$, and since the natural positive cone  $\ell^+_p$ of  $\ell_p$ is just the positive cone of $\ell_p$ related to the basis $\{e_n\}$, it follows from Theorem \ref{5.1} whenever $1\leq p<\infty$.

If $p=\infty$, then  \[({\ell^+_\infty})_N={\rm int}{\ell^+_\infty}=\{x=(x(n)): 0<\alpha\equiv\inf_n x(n)\leq\sup_n x(n)\equiv\beta<\infty\}.\]
Therefore, for any fixed $u=(u(n))\in({\ell^+_\infty})_N$ we have
\[[-u,u]=\{x\in\ell_\infty: -u(n)\leq x(n)\leq u(n),\;\forall\;n\in\mathbb N\},\]
and
\[{\rm ext}[-u,u]=\big\{(\varepsilon_1 {u(1)},\varepsilon_2 u(2),\cdots,\varepsilon_n {u(n)},\cdots): \varepsilon_j\in\{-1,1\}\big\}.\]
Consequently,
$T: X_u\rightarrow\ell_\infty$ defined  for $x=(x(n))_{n=1}^\infty\in X_u$  by
\[T(x)=({u(1)}^{-1}x(1), {u(2)}^{-1}x(2),\cdots,{u(n)}^{-1}x(n),\cdots)\]
is a linear surjective isometry. \qed
\end{proof}
\begin{theorem}\label{5.3}
Let $X$ be a Banach space with an unconditional basis $\{e_n\}$, and $X^+$ be the positive cone of $X$ associated with $\{e_n\}$, i.e.,
\[X^+=\{x=\sum_na_ne_n\in X: a_n\geq0\;{\rm for\;all\;}n\in\mathbb N\}.\]
Then

i) $(X^+)_N=\{x=\sum_na_ne_n\in X: a_n>0\;{\rm for\;all\;}n\in\mathbb N\}$.

ii) For all $u\in (X^+)_N$, $(X_u,\|\cdot\|_u)$ is isometrically isomorphic to $\ell_\infty$.
\end{theorem}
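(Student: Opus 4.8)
The plan is to reduce Theorem \ref{5.3} to the already-established Lemma \ref{5.1} by a renorming argument, after observing that both objects in the statement --- the nonsupport set $(X^+)_N$ and the generating-space norm $\|\cdot\|_u$ --- are insensitive to replacing $\|\cdot\|$ by an equivalent norm. First I would renorm $X$ so that $\{e_n\}$ becomes a normalized $1$-unconditional basis. If $K$ is the (bounded-multiplier) unconditional constant of $\{e_n\}$, set
\[
|||x||| = \sup\Bigl\{\,\bigl\|\sum_n \theta_n a_n e_n\bigr\| : |\theta_n|\le 1\ \forall n\,\Bigr\},\qquad x=\sum_n a_n e_n .
\]
Then $\|x\|\le |||x|||\le K\|x\|$, so $|||\cdot|||$ is equivalent to $\|\cdot\|$ and $\{e_n\}$ is $1$-unconditional for $|||\cdot|||$. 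Replacing each $e_n$ by $f_n = e_n/|||e_n|||$ yields a normalized $1$-unconditional basis $\{f_n\}$ of $(X,|||\cdot|||)$. The crucial point is that rescaling basis vectors by positive scalars does not move the cone: since $f_n = c_n e_n$ with $c_n = 1/|||e_n|||>0$, an element has nonnegative $f_n$-coordinates exactly when it has nonnegative $e_n$-coordinates, so the positive cone associated with $\{f_n\}$ is literally the same set $X^+$.

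Next I would invoke the renorming invariance of the two relevant notions. Support points of $X^+$ are defined through nonzero functionals in $X^*$, and equivalent norms on $X$ share the same dual as a set; hence $X^+_S$, and therefore $(X^+)_N$, computed in $(X,\|\cdot\|)$ coincides with that computed in $(X,|||\cdot|||)$. Likewise, for $u=\sum_n a_n e_n$ the order interval $[-u,u] = (X^+-u)\cap(u-X^+) = \{x : -a_n\le x(n)\le a_n\ \forall n\}$ is defined purely by the cone order, hence is the same set under either norm; consequently its Minkowski functional $\|\cdot\|_u$ is literally the same function and the generating space $(X_u,\|\cdot\|_u)$ is unchanged. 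Applying Lemma \ref{5.1} to the normalized $1$-unconditional basis $\{f_n\}$ of $(X,|||\cdot|||)$ then delivers both conclusions: part i) because the $f_n$-coordinate of $x$ is $a_n/c_n$, which is positive iff the $e_n$-coordinate $a_n$ is positive; and part ii) because the linear isometry onto $\ell_\infty$ produced there is built from $\|\cdot\|_u$ alone and so transfers back verbatim.

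I expect the only genuine content --- and the step deserving care --- to be these invariance claims, i.e.\ checking that the nonsupport set and the norm $\|\cdot\|_u$ depend only on the order structure induced by $\{e_n\}$ and not on the particular equivalent norm; once they are in hand the result is immediate from Lemma \ref{5.1}. Alternatively, one may bypass renorming and simply re-run the proof of Lemma \ref{5.1}: the computation $\|x\|_u = \sup_n |x(n)|/a_n$ and the surjectivity of $T:x\mapsto(x(n)/a_n)_n$ onto $\ell_\infty$ --- the latter using that unconditionality forces $\sum_n y_n a_n e_n$ to converge whenever $\sup_n|y_n|<\infty$ --- use neither normalization nor a unit unconditional constant, which explains why the stronger hypothesis of Lemma \ref{5.1} can be dropped here.
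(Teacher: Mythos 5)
Your proposal is correct and follows essentially the same route as the paper: renorm $X$ with the supremum over sign (or bounded-multiplier) changes so that the normalized basis becomes $1$-unconditional, observe that the positive cone, the order interval $[-u,u]$, and hence $\|\cdot\|_u$ and $(X^+)_N$ are unaffected, and then invoke Lemma \ref{5.1}. In fact you spell out the invariance of $(X^+)_N$ and of $\|\cdot\|_u$ under equivalent renorming more explicitly than the paper does, which only gains in clarity.
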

\begin{proof}
Without loss of generality, we can assume that $\{e_n\}$ is a 1-unconditional basis of $X$. Otherwise, let $f_n=e_n/\|e_n\|,\;n=1,2,\cdots$, and
 let $\parallel \mid \cdot \parallel \mid$ be defined for $x\in X$ by
\[\parallel \mid x \parallel \mid=\sup_{N\in\mathbb N,\varepsilon_n\in\{-1,1\}}\|\sum_{n=1}^N\varepsilon_n a_nf_n\|,\;\;x=\sum_{n=1}^\infty a_nf_n.\]
Then $\{f_n\}$ is a normalized 1-unconditional basis of $(X, \parallel \mid \cdot \parallel \mid)$.\par
 Note $(X,\parallel \mid \cdot \parallel \mid)^+=(X,\parallel \mid \cdot \parallel \mid)^+$. Then by Lemma \ref{5.1}, for each $u\in (X^+)_N$, the  Banach space
$X_u=(X_u,\parallel \cdot \parallel_u)$ generated by the closed convex set $[-u,u]$ is isometric to $\ell_\infty$. \qed
\end{proof}
Recall that the Haar system, i.e. the sequence of functions $\{\chi_{n}(t)\}_{n=1}^\infty$ defined on the interval $[0,1]$  by $\chi_1(t)\equiv1$, and for $k=0,1,2,\cdots,$ $j=1,2,\cdots,2^k$,
\begin{equation}\chi_{2^k+j}(t)=\left\{\begin{array}{ccc}
                    1~, & \;\;\;\;\;\;\;\;{\rm if\;}t\in[(2j-2)2^{-k-1},(2j-1)2^{-k-1}]; \\
                    -1~,& {\rm if\;}t\in((2j-1)2^{-k-1},2j2^{-k-1}];\\
                    0~,& {\rm otherwise}\;\;\;\;\;\;\;\;\;\;\;\;\;\;\;\;\;\;\;\;\;\;\;\;\;\;\;\;\;\;\;\;
                  \end{array}
\right.
\end{equation}
is (in the given order) a monotone (but not normalized) unconditional basis of $L_p[0,1]$ ($1<p<\infty$) with basis constant at most $p^*-1$, where $q^*=\max\{p,q\},$ and $\frac{1}p+\frac{1}q=1$ (See,  \cite[Theorem 6.1.7]{alb}). That is, \[\|\sum_{n=1}^N\varepsilon_na_n\chi_n\|\leq (p^*-1)\|\sum_{n=1}^Na_n\chi_n\|,\]
for every real number sequence $\{a_n\}_{n=1}^\infty$, $N\in\mathbb N$ and $\varepsilon_n\in\{-1,1\}$.
Then we obtain the following result.
\begin{corollary}\label{5.2}
Let  $1<p<\infty$, and $C$ be the positive cone of $L_p[0,1]$ with respect to the Haar basis $\{\chi_n(t)\}_{n=1}^\infty$, i.e.
\[C=\{x=\sum_na_n\chi_n\in X: a_n\geq0\;{\rm for\;all\;}n\in\mathbb N\}.\]
 Then for every $u\in C_N$,
\[(X_u,\|\cdot\|_u)\cong\ell_\infty.\]
\end{corollary}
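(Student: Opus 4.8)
The plan is to recognize this statement as an immediate specialization of the congruence theorem, Theorem \ref{5.3}, to the concrete Banach space $L_p[0,1]$ equipped with the Haar basis. Theorem \ref{5.3} asserts that whenever $X$ carries an unconditional basis $\{e_n\}$ and $X^+$ is the positive cone associated with that basis, then for every nonsupport point $u$ of $X^+$ the generating space $(X_u,\|\cdot\|_u)$ is linearly isometric to $\ell_\infty$. Thus the entire task reduces to checking that the hypotheses of Theorem \ref{5.3} are met in the present setting, namely that $\{\chi_n\}$ is an unconditional basis of $L_p[0,1]$ and that the cone $C$ of the statement coincides with the positive cone $X^+$ of Theorem \ref{5.3}.

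The crucial hypothesis to verify is that the Haar system $\{\chi_n(t)\}_{n=1}^\infty$ is an unconditional basis of $X=L_p[0,1]$. This is precisely the classical fact recalled immediately before the statement (see \cite[Theorem 6.1.7]{alb}): for $1<p<\infty$ the Haar system is a monotone unconditional basis of $L_p[0,1]$ with unconditional constant controlled by $p^*-1$. I stress that the restriction $1<p<\infty$ is genuinely needed and is the only delicate point: at the endpoint $p=1$ the Haar system remains a monotone basis but fails to be unconditional, so Theorem \ref{5.3} would not apply there. For $1<p<\infty$, however, unconditionality holds and nothing further must be proved.

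With unconditionality in hand the identification of the cones is purely formal: the set $C=\{x=\sum_n a_n\chi_n: a_n\geq 0 \text{ for all } n\in\mathbb N\}$ is by definition exactly the positive cone $X^+$ associated with the basis $\{\chi_n\}$ in the sense of Theorem \ref{5.3}. Consequently $C_N=(X^+)_N$, and part ii) of Theorem \ref{5.3} applies verbatim: for every $u\in C_N$ we obtain $(X_u,\|\cdot\|_u)\cong\ell_\infty$. One should note that the Haar basis is not normalized, but this causes no difficulty, since the proof of Theorem \ref{5.3} first passes to the equivalent $1$-unconditional renorming $\parallel\mid\cdot\parallel\mid$ and then invokes Lemma \ref{5.1}; this renorming changes neither the underlying space nor the cone $C$, hence neither the set $C_N$ nor the generating space $X_u$ together with its intrinsic norm $\|\cdot\|_u$. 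The only genuine obstacle is therefore the unconditionality of the Haar basis, and that obstacle is dispatched entirely by the cited theorem, which is why the whole argument collapses to a one-line invocation of Theorem \ref{5.3}.
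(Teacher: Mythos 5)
Your proposal is correct and takes essentially the same route as the paper: the paper's own proof is the single observation that the Haar system is an unconditional basis of $L_p[0,1]$ for $1<p<\infty$, followed by an invocation of the congruence theorem for positive cones associated with unconditional bases. Your additional remark that the non-normalized Haar basis is handled by the equivalent $1$-unconditional renorming, which changes neither the cone, nor $C_N$, nor $(X_u,\|\cdot\|_u)$, is a correct clarification of a detail the paper leaves implicit.
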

\begin{proof}
Since $\{\chi_n(t)\}_{n=1}^\infty$ is an unconditional basis of $L_p[0,1]$,  it follows from Theorem \ref{5.1} that for all $u\in C_N$, $(X_u,\|\cdot\|_u)$ is isometrically isomorphic to $\ell_\infty$. \qed
\end{proof}

\begin{corollary}\label{5.3}
Let   $c^+_0$ be the positive cone of $c_0$.  Then for every $u\in (c^+_0)_N$,
\[(X_u,\|\cdot\|_u)\cong\ell_\infty.\]
\end{corollary}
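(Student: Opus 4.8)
The plan is to reduce the statement to the congruence results already established, namely Lemma \ref{5.1} (equivalently Theorem \ref{5.3}), since $c_0$ carries a normalized $1$-unconditional basis whose associated positive cone is exactly $c_0^+$. First I would observe that the standard unit vector basis $\{e_n\}$ of $c_0$ is normalized and $1$-unconditional with respect to the sup-norm (changing signs of coordinates does not affect the supremum), and that the natural positive cone $c_0^+$ coincides with the positive cone $\{x=\sum_n a_ne_n : a_n\ge 0 \text{ for all } n\}$ associated with this basis. Lemma \ref{5.1} i) then identifies $(c_0^+)_N=\{x=(x(n))\in c_0 : x(n)>0 \text{ for all } n\}$, and Lemma \ref{5.1} ii) yields that every generating space $(X_u,\|\cdot\|_u)$ with $u\in(c_0^+)_N$ is isometrically isomorphic to $\ell_\infty$. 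This already proves the corollary, but for completeness I would exhibit the isometry explicitly, mirroring the proofs of Lemma \ref{5.1} and Corollary \ref{5.2}.

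Concretely, I would fix $u=(u(n))\in(c_0^+)_N$, so that $u(n)>0$ for every $n$ and $u(n)\to 0$. Then
\[[-u,u]=\{x\in c_0 : -u(n)\le x(n)\le u(n) \text{ for all } n\in\mathbb N\},\]
and $X_u=\bigcup_{\lambda>0}\lambda[-u,u]$ consists precisely of those $x\in c_0$ with $\sup_n |x(n)|/u(n)<\infty$, the generating norm being $\|x\|_u=\sup_n |x(n)|/u(n)$. I would then define $T:X_u\to\ell_\infty$ by $T(x)=(x(n)/u(n))_{n=1}^\infty$; this map is clearly linear, and $\|T(x)\|_\infty=\|x\|_u$, so $T$ is an isometry into $\ell_\infty$.

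The only point requiring genuine care is the surjectivity of $T$, and this is exactly where membership $u\in c_0$ is used rather than merely $u\in\ell_\infty$. Given any $b=(b(n))\in\ell_\infty$, I would set $x(n)=b(n)u(n)$; since $|b(n)|\le\|b\|_\infty$ and $u(n)\to 0$, it follows that $x(n)\to 0$, so $x\in c_0$, while $\sup_n |x(n)|/u(n)=\|b\|_\infty<\infty$ gives $x\in X_u$ with $T(x)=b$. Hence $T$ is a surjective linear isometry and $(X_u,\|\cdot\|_u)\cong\ell_\infty$. I do not anticipate any real obstacle: the decisive structural feature is simply that $u\in c_0$ forces $u(n)\to 0$, which keeps the preimage of every bounded sequence inside $c_0$, exactly as in the $\ell_p$ case treated in Corollary \ref{5.2}.
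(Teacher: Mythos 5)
Your proposal is correct and takes essentially the same route as the paper: the paper's entire proof of this corollary is the single observation that $c_0^+$ is the positive cone of $c_0$ associated with its standard (normalized, $1$-unconditional) unit vector basis, so the congruence lemma for spaces with $1$-unconditional bases applies directly. Your additional explicit construction of the isometry $T(x)=\bigl(x(n)/u(n)\bigr)_{n}$, together with the observation that $u(n)\to 0$ is what keeps preimages of bounded sequences inside $c_0$ and hence gives surjectivity, simply spells out the content of that lemma in this particular case.
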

\begin{proof} By Theorem \ref{5.1}, it suffices to note that $c^+_0$ is just the positive cone of $c_0$ with respect to the standard unit vector basis $\{e_n\}$ of $c_0$. \qed
\end{proof}

\section{More on $L_p$ spaces}
\begin{lemma}\label{6.1} For any $u\in (L^+_1[0,1])_N$, or, $(L^+_\infty[0,1])_N$, we have $X_u\cong L_\infty[0,1]$.
\end{lemma}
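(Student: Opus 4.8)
The plan is to exhibit an explicit weight-division isometry in both cases, since the two cones differ only in the integrability constraints they impose on the weight $u$. First I would pin down $X_u$ concretely. For $X=L_1[0,1]$, Example \ref{2.4} iii) gives that $u\in(L_1^+)_N$ iff $u\in L_1$ and $u>0$ almost everywhere; for $X=L_\infty[0,1]$, Example \ref{2.4} i) identifies $(L_\infty^+)_N$ with $\mathrm{int}(L_\infty^+)$, so that $u$ is bounded and $\operatorname{ess\,inf}u>0$. In either case the cone is closed, reproducing, and pointed, so Theorem \ref{3.9} applies with the pointwise (a.e.) order: by Theorem \ref{3.9} iii)--iv) the closed unit ball of $(X_u,\|\cdot\|_u)$ is $[-u,u]=\{f:|f|\le u\ \text{a.e.}\}$ and $X_u=\bigcup_{\lambda>0}\lambda[-u,u]=\{f:\ |f|\le\lambda u\ \text{a.e. for some }\lambda>0\}$. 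Since $u>0$ a.e., this last set is exactly $\{f:\ f/u\in L_\infty[0,1]\}$, and the Minkowski-functional formula (\ref{3.10}) reads $\|f\|_u=\inf\{\lambda>0:|f|\le\lambda u\ \text{a.e.}\}=\|f/u\|_\infty$.

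With $X_u$ so described, I would define the multiplication operator $T\colon X_u\to L_\infty[0,1]$ by $Tf=f/u$ (pointwise a.e., which is well defined because $u>0$ a.e.), with candidate inverse $S\colon L_\infty[0,1]\to X_u$ given by $Sg=ug$. Linearity of both maps is clear, and $TS=\mathrm{id}$, $ST=\mathrm{id}$ once $S$ is shown to land in $X_u$. The isometry property is immediate from the previous paragraph: $\|Tf\|_\infty=\|f/u\|_\infty=\|f\|_u$. What remains is to check that $T$ is onto, equivalently that $S$ maps into $X_u$: for $g\in L_\infty$ we have $|ug|\le\|g\|_\infty\,u$ a.e., so $ug\in\bigcup_{\lambda>0}\lambda[-u,u]=X_u$, where in the $L_1$ case $ug\in L_1$ because $u\in L_1$ and $g$ is bounded, while in the $L_\infty$ case $ug\in L_\infty$ because $u\in L_\infty$ and $g$ is bounded. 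Thus $T$ is a surjective linear isometry and $X_u\cong L_\infty[0,1]$.

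The one point that needs care---and the only place the two cases genuinely diverge---is ensuring that $T$ maps into $L_\infty$ and $S$ maps into $X_u$. In the $L_1$ case this is automatic from $u>0$ a.e.\ together with $u\in L_1$. In the $L_\infty$ case it is essential that $u$ be bounded away from $0$: this is exactly why I would invoke Example \ref{2.4} i), $(L_\infty^+)_N=\mathrm{int}(L_\infty^+)$, forcing $\operatorname{ess\,inf}u>0$ and hence $f/u\in L_\infty$ whenever $f\in L_\infty$; without this, division by $u$ could leave $L_\infty$. I expect no further obstacle, since once $X_u$ is identified as a weighted $L_\infty$ the isometry is forced. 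It is worth noting that, as in Example \ref{4.1}, the conclusion is a genuine topological ``blow-up'' in the $L_1$ case (the larger space $L_\infty$ sits $\|\cdot\|$-densely inside $L_1$), whereas in the $L_\infty$ case $X_u=L_\infty$ already as a set and $T$ merely renorms it isometrically.
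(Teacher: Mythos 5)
Your proof is correct and follows essentially the same route as the paper: both define the multiplication operator $Tf=u^{-1}f$ from $X_u$ to $L_\infty[0,1]$ and verify it carries the unit ball $[-u,u]$ onto $[-1,1]=B_{L_\infty[0,1]}$, hence is a surjective linear isometry. Your write-up merely spells out the surjectivity and well-definedness checks (via the inverse $Sg=ug$) that the paper leaves implicit.
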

\begin{proof}
i) Let  $u\in (L^+_1[0,1])_N$. Then $u(t)>0$ a.e. for $t\in[0,1]$.
let $T: X_u\rightarrow L_\infty[0,1]$ be defined for $f\in X_u$ by
$Tf=u^{-1}f$. Then we obtain that
\[T[-u,u]=[-1,1]=B_{L_\infty[0,1]}.\]
Therefore, $X_u\cong L_\infty[0,1]$.

ii) Note that
\[(L^+_\infty[0,1])_N={\rm int}(L^+_\infty[0,1])=\{f\in L^+_\infty[0,1]: 0<{\rm essinf}f\}. \]
For every $u\in(L^+_\infty[0,1])_N$, let $T: X_u\rightarrow L_\infty[0,1]$ be defined for $f\in X_u$ by
$Tf=u^{-1}f$. Then $T$ is a bounded linear operator from $L_\infty[0,1]$ to itself. Clearly,
\[T[-u,u]=[-1,1]=B_{\ell_\infty}.\]
Therefore, $T$ is a surjective isometry from $L_\infty[0,1]$ to itself. That is,  $X_u\cong L_\infty[0,1]$. \qed
\end{proof}

\begin{theorem}\label{6.2}
Let $(\Omega,\sum,\mu)$ be a $\sigma$-finite measure space, and $1\leq p\leq\infty$. Then

i) $(L^+_p(\mu))_N\neq\emptyset$;

ii) for each $u\in(L^+_p(\mu))_N\neq\emptyset$, the generating space $(X_u,\|\cdot\|_u)\cong L_\infty(\mu).$
\end{theorem}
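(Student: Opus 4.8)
The plan is to split the statement along its two assertions, first securing existence of a nonsupport point and then realizing $(X_u,\|\cdot\|_u)$ as $L_\infty(\mu)$ through a single ``division by $u$'' map that is insensitive to the value of $p$.

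For part i), I would use $\sigma$-finiteness directly. Writing $\Omega=\bigsqcup_n \Omega_n$ as a disjoint union with $0<\mu(\Omega_n)<\infty$, I set $u=\sum_n c_n\chi_{\Omega_n}$ with weights $c_n>0$ chosen so that $u\in L_p(\mu)$: for $1\le p<\infty$ take $c_n=(2^{-n}/\mu(\Omega_n))^{1/p}$, so that $\|u\|_p^p=\sum_n c_n^p\mu(\Omega_n)\le 1$, and for $p=\infty$ take $u=1$ (or any function with positive essential infimum). This $u$ is strictly positive a.e., and by Example \ref{2.4} (iii) for $1\le p<\infty$ and Example \ref{2.4} (i) for $p=\infty$, strict a.e.\ positivity is precisely the characterization of $(L_p^+(\mu))_N$; hence $(L_p^+(\mu))_N\ne\emptyset$. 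For later use I would also record the converse direction: any $u\in(L_p^+(\mu))_N$ satisfies $u>0$ a.e., since if $u$ vanished on a set of positive (hence, by $\sigma$-finiteness, positive finite) measure $B$, then $f\mapsto\int_B f\,d\mu$ would be a nonzero positive functional in $L_p(\mu)^*$ vanishing at $u$, making $u$ a support point.

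For part ii), the key construction is the map $T\colon X_u\to L_\infty(\mu)$ given by $Tf=u^{-1}f$, where $u^{-1}$ is the pointwise reciprocal, well defined a.e.\ exactly because $u>0$ a.e. First I would identify the closed unit ball of $(X_u,\|\cdot\|_u)$: since $C=L_p^+(\mu)$ is a closed pointed reproducing cone with $C_N\ne\emptyset$, Theorem \ref{3.9} (iii) together with the definition of the order gives $[-u,u]=(C-u)\cap(u-C)=\{f\in L_p(\mu):|f|\le u \text{ a.e.}\}$, whence $X_u=\bigcup_{\lambda>0}\lambda[-u,u]=\{f\in L_p(\mu):|f|\le\lambda u \text{ a.e.\ for some }\lambda>0\}$. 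From this description $f\in X_u$ forces $|u^{-1}f|\le\lambda$ a.e., so $T$ maps into $L_\infty(\mu)$ and is a linear isometry because $\|Tf\|_\infty=\operatorname{esssup}(|f|/u)=\inf\{\lambda>0:|f|\le\lambda u \text{ a.e.}\}=\|f\|_u$. Surjectivity is the complementary computation: given $g\in L_\infty(\mu)$, the function $ug$ satisfies $|ug|\le\|g\|_\infty u$ a.e., so $ug\in L_p(\mu)$ (using crucially that $u\in L_p(\mu)$) and $ug\in X_u$, while $T(ug)=g$. Hence $T$ is a surjective linear isometry and $X_u\cong L_\infty(\mu)$.

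I expect the main obstacle to be bookkeeping around the boundary behaviour rather than a conceptual difficulty: one must check that $u>0$ a.e.\ is genuinely forced by $u$ being a nonsupport point (so that $u^{-1}$ and hence $T$ make sense), and that surjectivity really returns $ug$ to $L_p(\mu)$, which is where the membership $u\in L_p(\mu)$ and $\sigma$-finiteness do the work. The case $p=\infty$ requires a separate sentence only in part i) (the characterization of $(L_\infty^+(\mu))_N$ via the essential infimum, as in Lemma \ref{6.1} (ii)), whereas the division map $T$ of part ii) is entirely uniform in $p$ and subsumes both Lemma \ref{6.1} and the earlier $\ell_p$ and Haar-basis computations as special cases.
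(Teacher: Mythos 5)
Your proposal is correct and follows essentially the same route as the paper: part i) rests on the characterization of $(L_p^+(\mu))_N$ as the a.e.\ strictly positive functions (which the paper states as ``easy to observe'' and you usefully make explicit via the $\sigma$-finite decomposition), and part ii) uses the identical division map $Tf=u^{-1}f$ carrying $[-u,u]$ onto the unit ball $[-1,1]$ of $L_\infty(\mu)$. The extra bookkeeping you supply (why $u>0$ a.e.\ is forced, why $ug\in L_p(\mu)$) only fills in steps the paper leaves implicit.
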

\begin{proof}
i) Assume $1\leq p<\infty$. Then it is easy to observe that
\[(L^+_p(\mu))_N=\{f\in L_p(\mu): f(\omega)>0\;a.e. \;\omega\in\Omega\}.\]
Therefore, $(L^+_p(\mu))_N\neq\emptyset$. If $p=\infty$, then $(L^+_\infty(\mu))_N={\rm int}L^+_\infty(\mu)\neq\emptyset$.

ii) Given $u\in (L^+_p(\mu))_N$, let $T: X_u\rightarrow L_\infty(\mu)$ be defined  by
\[Tf=u^{-1}f, f\in X_u.\]
Then we obtain that $T$ is a bounded linear operator and satisfies
\[T[-u,u]=[-1,1].\]
Consequently, $T:X_u\rightarrow L_\infty(\mu)$ is a linear surjective isometry. \qed
\end{proof}
\begin{theorem}\label{6.3}
Let $(\Omega,\sum,\mu)$ be a $\sigma$-finite measure space, and $1<p<\infty$. Then there is a closed reproducing cone $C$ of $L_p(\mu)$ with $C_N\neq\emptyset$ such that
for each $u\in C_N$, we have the generating space $X_u\cong\ell_\infty$.
\end{theorem}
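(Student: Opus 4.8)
The guiding idea is to contrast this with Theorem~\ref{6.2}: there the \emph{natural} positive cone of $L_p(\mu)$ produced the generating space $L_\infty(\mu)$, whereas here the aim is to install a \emph{different} closed reproducing cone whose order intervals are countably generated, so that the congruence theorem for unconditional bases forces the generating space to be $\ell_\infty$ instead. Accordingly, the plan is threefold: (1) equip $L_p(\mu)$ with an unconditional basis $\{e_n\}$; (2) take $C$ to be the positive cone associated with $\{e_n\}$; and (3) invoke Theorem~\ref{5.3} to conclude $X_u\cong\ell_\infty$ for every $u\in C_N$.

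To obtain the basis I would exploit the structure of $\sigma$-finite $L_p$ spaces. In the separable situation $L_p(\mu)$ is linearly isometric to one of $\ell_p$, $L_p[0,1]$, or their $\ell_p$-sum, each of which carries a canonical unconditional basis (the unit-vector basis, the Haar system of Corollary~\ref{5.2}, or their concatenation). A clean way to package this is to fix a linear isomorphism $\Phi$ from $L_p(\mu)$ onto such a model space $M$ and to \emph{pull back} the model positive cone, setting $C=\Phi^{-1}(C_M)$. The key point is that the generating-space construction is invariant under linear isomorphism of the ambient space: since $\|x\|_u=\inf\{\lambda>0:x\in\lambda([-u,u])\}$ is the Minkowski functional of the order interval $[-u,u]=(C-u)\cap(u-C)$, and $\Phi$ carries this interval onto $[-\Phi u,\Phi u]$ relative to $C_M$, the map $\Phi$ restricts to a \emph{linear isometry} of $(X_u,\|\cdot\|_u)$ onto the corresponding model generating space. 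Hence the conclusion $X_u\cong\ell_\infty$ transfers from the model to $L_p(\mu)$.

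It then remains to check that $C$ has the three required properties. That $C$ is closed follows from writing $C=\bigcap_n\{x:\langle e_n^*,x\rangle\ge 0\}$ as an intersection of closed half-spaces determined by the continuous coordinate functionals $e_n^*$. That $C$ is reproducing follows from unconditionality: for $x=\sum_n a_ne_n$ the coefficientwise positive and negative parts $\sum_n a_n^+e_n$ and $\sum_n a_n^-e_n$ both converge, so $x=\sum_n a_n^+e_n-\sum_n a_n^-e_n\in C-C=L_p(\mu)$. That $C_N\neq\emptyset$ is exactly Theorem~\ref{5.3}(i), which identifies $C_N$ with the vectors having all coordinates strictly positive; a rapidly decaying choice such as $u=\sum_n 2^{-n}e_n$ lies in $C_N$. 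With these verified, Theorem~\ref{5.3}(ii) yields $(X_u,\|\cdot\|_u)\cong\ell_\infty$ for every $u\in C_N$, and by density of $X_u$ in $L_p(\mu)$ (Theorem~\ref{3.9}(i)) the cone is genuinely reproducing, which is the assertion.

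I expect the main obstacle to be step~(1): securing the unconditional basis, equivalently reducing an arbitrary $\sigma$-finite $\mu$ to one of the concrete models. This is where the structure theory of $L_p$ does the real work, and it is also where a genuine restriction surfaces, since a space admitting a countable unconditional basis is separable; thus the literal target $\ell_\infty$, with its countable coordinate set, can only arise when $L_p(\mu)$ is separable, i.e.\ when the measure algebra is countably generated. I would therefore carry out the argument in that setting (covering $L_p[0,1]$, $\ell_p$, and their $\ell_p$-sums), treating the reduction to one of these models as the decisive step, with the half-space closedness and the unconditional-convergence reproducing computation above being the routine remainder.
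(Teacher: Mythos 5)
Your proposal follows essentially the same route as the paper: the paper decomposes $\Omega$ into its atomless and atomic parts, identifies $L_p(\mu)$ with $L_p[0,1]\oplus_p\ell_p$ (or $\ell_p^n$), and then applies the unconditional-basis congruence theorem through the Haar system to the positive cone of the resulting basis --- precisely your steps (1)--(3), with your isomorphism-pullback of the cone left implicit. Your closing caveat about separability is well taken, since the paper's identification of the atomless part with $L_p[0,1]$ likewise requires the measure algebra to be countably generated, so both arguments tacitly operate in the separable setting.
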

\begin{proof}
Let $\Omega_1, \Omega_2\in\sum$ satisfy that $\Omega=\Omega_1\bigcup\Omega_2$, $\Omega_1\bigcap\Omega_2=\emptyset$ and that $(\Omega_1,\sum_1,\mu)$ is atomless and $(\Omega_2,\sum_2,\mu)$
is atomic, where $\sum_j=\Omega_j\bigcap\sum,\;j=1,2$. Since $(\Omega,\sum,\mu)$ is $\sigma$-finite,  $\Omega_2$ is countable. Therefore, \[L_p(\Omega,\sum,\mu)=L_p(\Omega_1,{\sum}_1,\mu)\oplus_p L_p(\Omega_2,{\sum}_2,\mu)\;\;\;\;\;\;\;\;\]
\begin{equation}\nonumber
\cong\left\{\begin{array}{cc}
                    L_p(\Omega_1,\sum_1,\mu)\oplus_p \ell_p, & \;\;\;\;\;\;\;\;{\rm if\;}\Omega_2\;{\rm is\;\; infinite;\;\;\;\;\;\;\;\;\;\;} \\
                    L_p(\Omega_1,\sum_1,\mu)\oplus_p \ell^n_p~,& \;\;\;\;\;{\rm if\;} \Omega_2\;{\rm has \;n\; elements}.
                  \end{array}
\right.
\end{equation}

Case I. $\mu(\Omega_1)=0$. It is trivial.

Case II. $0<\mu(\Omega_1)<\infty$. Since $L_p(\Omega_1,\sum_1,\mu)\cong L_p[0,1]$, it follows from Corollary \ref{5.2}.

Case III. $\mu(\Omega_1)=\infty$. Since $(\Omega_1,\sum_1,\mu)$ is $\sigma$-finite, there is a $\sigma$-partition $\{E_j\}$ of $\Omega_1$ such that $0<\mu(E_j)<\infty$ for all $j\in\mathbb N$. Therefore, $L_p(E_j,\mu)\cong L_p[0,1]$ for all $j\in\mathbb N$. Consequently,
 \[L_p(\Omega_1,{\sum}_1,\mu)=\bigoplus_j L_p(E_j,\mu)\cong\bigoplus_j L_p([0,1])\cong L_p([0,1]).\]
 It again follows from Corollary \ref{5.2}. \qed

\end{proof}

\section{On lattice versions}
In this section, we will again discuss $C$-generating spaces in the particular case that $X$ is a Banach lattice and $C$ is the ``positive" cone of $X$. For more information concerning lattice theory, we refer the reader to \cite{85Aliprantis,06Aliprantis,74Schaefer}.

 All notions related to Banach lattices and abstract $M$ spaces are the same as in Lindenstrauss and Tzafriri \cite{lin}.  Recall that a partially ordered real Banach space $Z$ is called a Banach lattice provided

(i) $x\leq y$ implies $x+z\leq y+z$ for all $x,y,z\in Z$;

(ii) $ax\geq0,$ for all $x\geq0$ in $Z$ and $a\in\mathbb R^+$;

(iii) both $x\vee y$ and $x\wedge y$ exist for all $x,y\in Z$;

(iv) $\|x\|\leq\|y\|$ whenever $\mid x\mid \equiv x\vee{-x}\leq y\vee{-y}\equiv \mid y\mid$.

It follows from iv) and
\begin{equation}\label{7.1}
\mid-y\mid=\mid x\vee z-y\vee z\mid+\mid x\wedge z-y\wedge z\mid, \;\;{\rm for\;all\;}x,y,z\in Z,
\end{equation}
that the lattice operations are norm continuous.

By a sublattice of a Banach lattice $Z$ we mean a linear subspace $Y$ of $Z$ so that $x\vee y$ (and also $x\wedge y=x+y-x\vee y$) belongs to $Y$ whenever $x,y\in Y$. A lattice ideal $Y$ in $Z$ is a sublattice of $Z$ satisfying that $\mid z\mid \leq \mid y\mid$ for $z\in Z$ and for some $y\in Y$ implies $z\in Y$.

A Banach lattice $X$ is said to be an abstract $M$  space ($AM$-space, for short) if
\[x,y\in X,\;\;\mid x\mid \wedge \mid y\mid =0\Longrightarrow \|x+y\|=\max\{\|x\|,\|y\|\}.\]
A mapping $T$ from a partially ordered real Banach space $X$ to a  partially ordered real Banach space $Y$ is said to be an order isometry provided it is a fully order preserving isometry, i.e. $\|Tx-Ty\|=\|x-y\|$ for all $x,y\in X$ and $Tx\geq Ty$ if and only if $x\geq y$.
\begin{lemma}\label{7.2}
Let $X$ be a Banach lattice and $X^+=\{\mid x\mid: x\in X\}$ be the positive cone. Then

i) for every $u\in(X^+)_N$,
$X_u\equiv(X_u,\|\cdot\|_u)$ is again a Banach lattice and $u$ is its unit.

ii) If, in addition, $X$ is a  function space consisting of real-valued functions defined on a set $\omega$, and the lattice operations are induced by its natural order, i.e., $x\geq y$ if and only if $x(\omega)\geq y(\omega)$ for all $\omega\in \Omega$, then $(X_u,\|\cdot\|_u)$ is  an AM-space with the unit $u$.
\end{lemma}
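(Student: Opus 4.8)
The plan is to recognize the generating space $X_u$ as the principal ideal generated by $u$ in the Banach lattice $X$, to transport the order and the lattice operations of $X$ onto it, and then to read the $AM$-identity directly off the fact that $\|\cdot\|_u$ is the order-unit norm determined by $u$. First I would fix the underlying set. By Theorem \ref{3.9} iii) the closed unit ball of $(X_u,\|\cdot\|_u)$ is $(C-u)\cap(u-C)$, which in lattice language is exactly the order interval $[-u,u]=\{x\in X:\ |x|\le u\}$; since $X_u=\bigcup_{\lambda>0}\lambda[-u,u]$ by Theorem \ref{3.9} iv), it follows that
\[
X_u=\{x\in X:\ |x|\le\lambda u\ \text{for some}\ \lambda>0\},
\]
the principal ideal generated by $u$. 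A principal ideal is in particular a solid linear subspace ($|z|\le|x|$ with $x\in X_u$ forces $z\in X_u$), hence a sublattice: from $x\vee y=\tfrac12\big(x+y+|x-y|\big)$ one sees that $X_u$ is stable under $\vee$ and $\wedge$. Thus the order of $X$ restricts to $X_u$, and axioms (i)--(iii) of a Banach lattice are inherited verbatim.

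Next I would verify that $\|\cdot\|_u$ is a lattice norm with unit $u$. The pivotal auxiliary fact is that the order-unit infimum is attained as an order relation, namely $|x|\le\|x\|_u\,u$ for every $x\in X_u$: writing $\lambda_0=\|x\|_u$, the vector $\lambda u-|x|$ lies in the cone $X^+$ for each $\lambda>\lambda_0$, and letting $\lambda\downarrow\lambda_0$ while using the norm-closedness of $X^+$ gives $\lambda_0 u-|x|\in X^+$. Consequently, if $|x|\le|y|$ then $|x|\le|y|\le\|y\|_u u$, so $\|x\|_u\le\|y\|_u$; this is axiom (iv). Since $(X_u,\|\cdot\|_u)$ is complete by Theorem \ref{3.9} v), $X_u$ is a Banach lattice; and as $|x|\le\|x\|_u u$ holds for all $x$ while $\|u\|_u=1$ (here $u\neq0$, since $0$ is always a support point of $X^+$), the element $u$ is an order unit and $\|\cdot\|_u$ is precisely its order-unit norm, which proves i).

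Finally, for ii) I would deduce the $AM$-identity from the order-unit norm. From $|x|\le\|x\|_u u$ one gets, for positive $a,b\in X_u$, that $a\vee b\le(\|a\|_u\vee\|b\|_u)\,u$, whence $\|a\vee b\|_u\le\max\{\|a\|_u,\|b\|_u\}$; the reverse inequality is monotonicity of the lattice norm, so $\|a\vee b\|_u=\max\{\|a\|_u,\|b\|_u\}$ for all $a,b\ge0$. If now $|x|\wedge|y|=0$, the standard disjointness identity $|x+y|=|x|\vee|y|$ yields
\[
\|x+y\|_u=\big\||x|\vee|y|\big\|_u=\max\{\|x\|_u,\|y\|_u\},
\]
which is exactly the defining property of an $AM$-space, so $X_u$ is an $AM$-space with unit $u$. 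In the function-space situation of ii) this is fully transparent: the pointwise order forces $u>0$ on $\Omega$ and $\|x\|_u=\sup_{\omega}|x(\omega)|/u(\omega)$, disjointness of $x,y$ means they vanish off complementary subsets of $\Omega$, and the supremum of $|x+y|/u$ then splits as the larger of the two separate suprema. The one delicate point in the whole argument is the attainment $|x|\le\|x\|_u u$; everything else is a routine restriction of the lattice structure, and this step is precisely where the norm-closedness of the positive cone of the ambient Banach lattice is essential.
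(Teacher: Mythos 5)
Your proof is correct. For part i) it is essentially the paper's argument: both identify the unit ball of $(X_u,\|\cdot\|_u)$ with the order interval $[-u,u]$, check the lattice axioms there, and get monotonicity of $\|\cdot\|_u$ from the equivalence ``$\|y\|_u\le1\Longleftrightarrow |y|\le u$''; your only addition is to make explicit the attainment $|x|\le\|x\|_u\,u$ via norm-closedness of $X^+$, which the paper leaves implicit in that equivalence (it is exactly the statement that $[-u,u]$ is the \emph{closed} unit ball, Theorem \ref{3.9} iii)). For part ii) you genuinely diverge. The paper uses the function-space hypothesis twice: to read $|x|\wedge|y|=0$ as disjointness of supports, and to argue that for such $x,y$ one has $x+y\in[-u,u]$ if and only if both $x,y\in[-u,u]$, from which the inequality $\|x+y\|_u\le\|x\|_u\vee\|y\|_u$ follows by normalization. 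You instead derive $\|a\vee b\|_u=\|a\|_u\vee\|b\|_u$ for all $a,b\ge0$ directly from the order-unit inequality $|x|\le\|x\|_u\,u$ and then apply the disjointness identity $|x+y|=|x|\vee|y|$ (which is (\ref{7.3}) in the paper and holds in any Riesz space). This buys generality: your argument never uses that $X$ is a function space, so it proves that $X_u$ is an AM-space with unit $u$ for an arbitrary Banach lattice $X$ — consistent with the equivalence i)$\Longleftrightarrow$ii) of Theorem \ref{7.7}, and strictly stronger than the statement of Lemma \ref{7.2} ii). The paper's route is more concrete but needs the extra hypothesis; yours is marginally longer but self-contained and hypothesis-free.
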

\begin{proof}
i) Note that the closed unit ball $[-u,u]$ of $X_u$ is a bounded  closed symmetric convex subset of $X$ with respect to the norm topology of $X$. Clearly, $ax\geq0,$ for all $x\geq0$ in $X_u$ and $a\in\mathbb R^+$.
For every pair $x,y\in [-u,u]$, by properties of the lattice operations on $X$, we have
\[-u\leq x\wedge y\leq x\vee y\leq u.\]
This implies that both $x\vee y$ and $x\wedge y$ exist for all $x,y\in X_u$, and further,
$x\leq y$ implies $x+z\leq y+z$ for all $x,y,z\in X_u$. Let $x, y\in X_u$. Then $\mid y \mid\leq u$ and $\mid x\mid \leq \mid y\mid$ imply that  $-u\leq x\leq u$. This is clearly equivalent to that $\|y\|_u\leq 1$ and $\mid x\mid\leq \mid y\mid$ entail $\|x\|_u\leq1$. Thus, for every $x,y\in X_u$, if $\mid x\mid \leq \mid y\mid $, then $\|x\|_u\leq\|y\|_u$. Thus, $(X_u,\|\cdot\|_u)$ is again a Banach lattice with the unit $u$.

ii) Note that $\mid x\mid \wedge\mid y \mid=0$ is equivalent to that ${\rm supp}x\bigcap{\rm supp}y=\emptyset$ if $X$ is a function space.  Let $x,y\in X_u$ satisfy  $\mid x\mid \wedge\mid y\mid=0$.  Then \begin{equation}\label{7.3}\mid x+y\mid =\mid x\mid +\mid y\mid =\mid x\mid \vee \mid y\mid + \mid x\mid \wedge \mid y\mid =\mid x\mid \vee \mid y\mid.\end{equation} Since $(X_u,\|\cdot\|_u)$ is a Banach lattice, it follows from (\ref{7.3}) that  \begin{equation}\label{7.4}\|x+y\|_u\geq\|x\|_u\vee\|y\|_u.\end{equation} On the other hand, for every pair $x,y\in X_u$ with $\mid x\mid \wedge\mid y\mid =0$,
$x+y\in[-u,u]$ if and only if $x,y\in[-u,u]$. Consequently,   $\|y\|_u\leq\|x\|_u=1$ and  $\mid x\mid \wedge\mid y\mid =0$ imply that \begin{equation}\label{7.5}\|x+y\|_u\leq1=\|x\|_u=\|x\|_u\vee\|y\|_u.\end{equation}
(\ref{7.4}) and (\ref{7.5}) together entail that for every pair $x,y\in X_u$ with $\mid x\mid \wedge\mid y\mid=0$ we have
\[\|x+y\|_u=\|x\|_u\vee\|y\|_u.\]
Therefore, $(X_u,\|\cdot\|_u)$ is  an AM-space with the unit $u$. \qed
\end{proof}
\begin{remark}
Lemma \ref{7.2} i) has been shown in \cite[Coro. p.102]{74Schaefer}. Here the proof is a different but simplified approach.
\end{remark}
Let $X$ be a Banach lattice with an order unit $e$, and ${X^*}^+$ be the positive cone of $X^*$, i.e. ${X^*}^+$ is the closed cone of $X^*$ consisting of all positive functionals with respect to the positive cone $X^+$ of $X$. We denote by $B_{X^*}^+=B_{X^*}\bigcap{X^*}^+$.
Let
\begin{align}\label{7.6}
K=\{x^*\in B_{X^*}^+: x^*\in{\rm ext}B_{X^*}\;{\rm with\;}\;\langle x^*,e\rangle=1\}.
\end{align}
Then $K$ is a compact Hausdorff space when it is endowed with the $w^*$-topology of $X^*$.

Keep these notions in mind. Then we state the  Kakutani-Bohnenblust-M. Krein-S. Krein Theorem as follows. The proof of  ``i) $\Longleftrightarrow$ iii)" can be seen in Bohnenblust and  Kakutani \cite{bo} and the proof of  ``i) $\Longleftrightarrow$ ii)"  is just \cite[Theorem 9.32]{06Aliprantis}.
\begin{theorem}[Kakutani-Bohnenblust-M. Krein-S. Krein]\label{7.7}
Let $X$ be a Banach lattice. Then the following statements are equivalent.

i) $X$ is a Banach lattice with a unit;

ii) $X$ is an AM-space with a unit;

iii) $X$ is order isometric to $C(K)$.  The space $K$ defined as (\ref{7.6}) is unique up to homeomorphism.
\end{theorem}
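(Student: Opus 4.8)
The plan is to prove the cycle $\mathrm{i)}\Rightarrow\mathrm{ii)}\Rightarrow\mathrm{iii)}\Rightarrow\mathrm{i)}$, concentrating the real work in the representation step $\mathrm{ii)}\Rightarrow\mathrm{iii)}$. The implication $\mathrm{iii)}\Rightarrow\mathrm{i)}$ is immediate, since any $C(K)$ is a Banach lattice in its pointwise order with order unit the constant function $\mathbf 1$ and $B_{C(K)}=[-\mathbf 1,\mathbf 1]$. For $\mathrm{i)}\Rightarrow\mathrm{ii)}$ I would argue exactly as in Lemma \ref{7.2} ii): writing $e$ for the unit, so that $B_X=[-e,e]$, take $x,y$ with $\mid x\mid\wedge\mid y\mid=0$ and normalize $\max\{\|x\|,\|y\|\}=\|x\|=1$; then $\mid x\mid,\mid y\mid\le e$ and disjointness gives $\mid x+y\mid=\mid x\mid+\mid y\mid=\mid x\mid\vee\mid y\mid\le e$, whence $\|x+y\|\le1$, while $\mid x+y\mid=\mid x\mid\vee\mid y\mid\ge\mid x\mid$ forces $\|x+y\|\ge\|x\|=1$ through axiom (iv). Thus $\|x+y\|=\max\{\|x\|,\|y\|\}$ and $X$ is an AM-space with unit. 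Conversely $\mathrm{ii)}\Rightarrow\mathrm{i)}$ holds trivially, because an AM-space is by definition a Banach lattice.

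For $\mathrm{ii)}\Rightarrow\mathrm{iii)}$ I would introduce the canonical evaluation map $J\colon X\to C(K)$, $(Jx)(x^*)=\langle x^*,x\rangle$ for $x^*\in K$, where $K$ is the $w^*$-compact Hausdorff space from (\ref{7.6}). Each $Jx$ is $w^*$-continuous on $K$, and $J$ is linear and positive since $K\subset X^{*+}$; moreover $Je=\mathbf 1$ because $\langle x^*,e\rangle=1$ on $K$. To see that $J$ is an isometry, first note that for a positive functional one has $\|x^*\|=\sup_{-e\le x\le e}\langle x^*,x\rangle=\langle x^*,e\rangle$, so that $K$ is exactly the set of positive extreme points of $B_{X^*}$. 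The decisive structural fact is that the dual of an AM-space with unit is an AL-space, i.e. $\|x^*\|=\|x^{*+}\|+\|x^{*-}\|$ for the Jordan decomposition; an extremality argument then shows $\operatorname{ext}B_{X^*}=K\cup(-K)$, since if $x^*$ is extreme then normalizing the two disjoint parts expresses $x^*$ as a convex combination of a positive and a negative state unless one part vanishes. Combining Banach--Alaoglu, Krein--Milman and the resulting identity $B_{X^*}=\overline{\operatorname{co}}^{\,w^*}(K\cup(-K))$, the functional $x^*\mapsto\langle x^*,x\rangle$ attains its supremum over $B_{X^*}$ on $K\cup(-K)$, giving $\|x\|=\sup_{x^*\in K}\mid\langle x^*,x\rangle\mid=\|Jx\|_\infty$.

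It remains to show that $J$ is onto and order preserving in both directions. Here the crucial lemma is that every $x^*\in K$, being extreme in the state space $\{x^*\ge0:\langle x^*,e\rangle=1\}$, is a lattice homomorphism, i.e. $\langle x^*,x\vee y\rangle=\langle x^*,x\rangle\vee\langle x^*,y\rangle$. I expect this to be the main obstacle: one argues that if $x^*$ failed to be a homomorphism there would exist disjoint $a,b\ge0$ with $\langle x^*,a\rangle,\langle x^*,b\rangle>0$, and redistributing the mass of $x^*$ across the band generated by $a$ and its complement would express $x^*$ as a nontrivial convex combination of two distinct states, contradicting extremality. Granting this, $J$ preserves $\vee$ and $\wedge$ pointwise, so $J(X)$ is a closed sublattice of $C(K)$ that contains the constants (as $Je=\mathbf 1$) and separates the points of $K$; the lattice form of the Stone--Weierstrass theorem then yields $J(X)=C(K)$. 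As a bijective lattice homomorphism and isometry, $J$ has a positive inverse, hence is an order isometry, which proves $\mathrm{iii)}$.

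Finally, for the uniqueness of $K$ up to homeomorphism I would observe that $K$ is recoverable intrinsically from the order-isometry class of $X$: it is the set of extreme points of the $w^*$-compact state space of $X^*$, a purely order- and norm-theoretic object. Hence if $X$ is order isometric to both $C(K_1)$ and $C(K_2)$, the two extreme-state sets are affinely $w^*$-homeomorphic; since in each $C(K_i)$ the extreme states are precisely the Dirac evaluations $\delta_k$ with $k\in K_i$, one reads off a homeomorphism $K_1\cong K_2$. This is the Banach--Stone phenomenon in its lattice form, and it completes the proof.
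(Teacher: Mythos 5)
The paper does not actually prove Theorem \ref{7.7}: it is quoted as a classical result, with the equivalence i) $\Leftrightarrow$ iii) attributed to Bohnenblust--Kakutani \cite{bo} and i) $\Leftrightarrow$ ii) to \cite[Theorem 9.32]{06Aliprantis}. What you have written is the standard proof of that classical theorem, and it is correct in outline. The cycle is organized sensibly, i) $\Rightarrow$ ii) is a clean adaptation of the computation in Lemma \ref{7.2} ii), and the representation step follows the canonical route: the evaluation map $J$ into $C(K)$ with $K$ as in (\ref{7.6}), the AL-property of the dual of an AM-space to identify ${\rm ext}B_{X^*}=K\cup(-K)$, Krein--Milman to get the isometry, the fact that extreme states are lattice homomorphisms to make $J$ a lattice map, and the lattice Stone--Weierstrass theorem for surjectivity. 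Two remarks. First, your reading of ``Banach lattice with a unit'' as meaning $B_X=[-e,e]$ (the order-unit norm) is the one the paper intends --- this is exactly how ``unit'' is used in Lemma \ref{7.2} and Theorem \ref{7.8} --- and it is essential: with the weaker reading ``$X$ has an order unit'' the implication i) $\Rightarrow$ ii) would be false (e.g.\ $\ell_2^2$ with its natural order unit is not an AM-space). Second, the two heaviest ingredients --- that the dual of an AM-space is an AL-space, and that extremality of a state forces it to be a lattice homomorphism via band projections in the (Dedekind complete) dual --- are only sketched, but the sketches point at the correct arguments, so nothing is actually wrong; a fully self-contained write-up would need to expand these, which is presumably why the paper simply cites \cite{bo} and \cite{06Aliprantis} instead.
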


\begin{theorem}\label{7.8}
 Let $X$ be a Banach lattice so that its positive cone has nonempty interior. Then it is order isomorphic to a Banach space $C(K)$ for some compact Hausdorff space $K$, i.e., there is an equivalent lattice norm on $X$ so that   $X$ (with respect to the new norm)  is order isometric to  $C(K)$.
\end{theorem}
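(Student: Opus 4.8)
The plan is to reduce Theorem \ref{7.8} to the Kakutani--Bohnenblust--M.~Krein--S.~Krein Theorem (Theorem \ref{7.7}) by manufacturing, from an interior point of the positive cone, an equivalent lattice norm under which $X$ becomes a Banach lattice with a unit. First I would fix a point $e\in{\rm int}(X^+)$ and show that $e$ is an order unit of $X$. Since ${\rm int}(X^+)\neq\emptyset$, there is $r>0$ with $\{y:\|y-e\|<r\}\subset X^+$; hence for every $x$ with $\|x\|<r$ both $e+x\geq 0$ and $e-x\geq 0$, that is, $-e\leq x\leq e$. Rescaling shows that for an arbitrary $x\in X$ one has $-\lambda e\leq x\leq\lambda e$ for all sufficiently large $\lambda$, so the order interval $[-e,e]$ is an absorbing symmetric convex set and the generating space $X_e=\bigcup_{\lambda>0}\lambda[-e,e]$ coincides with $X$.

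Next I would introduce the order-unit norm
\[\|x\|_e=\inf\{\lambda>0:-\lambda e\leq x\leq\lambda e\},\]
which is exactly the norm $\|\cdot\|_u$ of Theorem \ref{3.6} with $u=e$, i.e. the Minkowski functional of $[-e,e]$. I would then verify that $\|\cdot\|_e$ is equivalent to the original norm $\|\cdot\|$: the inclusion $\{y:\|y\|<r\}\subset[-e,e]$ yields $\|\cdot\|_e\leq r^{-1}\|\cdot\|$, while the lattice axiom (iv) applied to $\mid x\mid\leq e$ gives $\|x\|\leq\|e\|$ for every $x\in[-e,e]$, so that $\|\cdot\|\leq\|e\|\,\|\cdot\|_e$. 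Consequently the two norms are equivalent and $(X,\|\cdot\|_e)$ is again a Banach space.

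Then I would invoke the lattice machinery already established in this section. By Lemma \ref{2.3}(iii), the hypothesis ${\rm int}(X^+)\neq\emptyset$ forces $(X^+)_N={\rm int}(X^+)$, so $e\in(X^+)_N$, and Lemma \ref{7.2}(i) applies directly: $(X_e,\|\cdot\|_e)$ is a Banach lattice with unit $e$. Since $X_e=X$ by the first step, $(X,\|\cdot\|_e)$ is a Banach lattice with a unit. Finally, the equivalence ``i) $\Longleftrightarrow$ iii)" of Theorem \ref{7.7} produces a compact Hausdorff space $K$ together with an order isometry of $(X,\|\cdot\|_e)$ onto $C(K)$. Because $\|\cdot\|_e$ is an equivalent lattice norm on $X$, this is precisely the assertion of the theorem.

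I expect the only genuinely delicate point to be the verification that $\|\cdot\|_e$ is a \emph{lattice} norm, i.e. that the original order structure is preserved intact under the renorming; but this is exactly the content of Lemma \ref{7.2}(i), so no new argument is required. Recognizing the interior point as an order unit is the conceptual step, after which the equivalence of norms and the appeal to Theorem \ref{7.7} are routine.
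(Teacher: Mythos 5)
Your proof is correct and follows essentially the same route as the paper's: identify $e\in{\rm int}(X^+)=(X^+)_N$, pass to the generating space $X_e$ with the order-unit norm, invoke Lemma \ref{7.2} i) to see it is a Banach lattice with unit $e$, and conclude via Theorem \ref{7.7}. The only (minor) divergence is that you verify the equivalence of $\|\cdot\|_e$ with the original norm directly from the ball inclusion and lattice axiom (iv), whereas the paper cites Theorem \ref{3.11}; your explicit argument is if anything tidier, since Theorem \ref{3.11} as stated compares two generating norms rather than a generating norm with $\|\cdot\|$.
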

\begin{proof}
 Suppose that $X^+$ has nonempty interior. Then $(X^+)_N={\rm int}X^+$. By Lemma \ref{7.2}, for any $u\in {\rm int}X^+$,   the generating space $X_u$ is again a Banach lattice with the unit $u$. Therefore, it follows from Theorem \ref{7.7} that $X_u$ is order isometric to a space $C(K)$ for some compact Hausdorff space $K$.
By Theorem \ref{3.11}, $\|\cdot\|_u$ is an equivalent norm on $X$. \qed
\end{proof}
\begin{theorem}\label{7.9}
Let $X$ be a separable Banach lattice. Then

i) the set $(X^+)_N$ of nonsupport points of the positive cone  $X^+$ is nonempty;

ii) for every $u\in(X^+)_N$,
$X_u\equiv(X_u,\|\cdot\|_u)$ is order isometric to a Banach space $C(K)$ for some compact Hausdorff space $K$.
\end{theorem}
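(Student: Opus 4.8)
The plan is to reduce Theorem~\ref{7.9} to the already-established machinery rather than reproving anything from scratch. The two assertions are: for a separable Banach lattice $X$, first that $(X^+)_N\neq\emptyset$, and second that for every $u\in(X^+)_N$ the generating space $(X_u,\|\cdot\|_u)$ is order isometric to some $C(K)$. For part i), I would observe that the positive cone $X^+$ of a Banach lattice is reproducing (since $x=x^+-x^-$ gives $X^+-X^+=X$), hence in particular almost reproducing. Separability of $X$ forces $X^+$ to be separable, so by Lemma~\ref{2.3}(ii) the only way $(X^+)_{N,[X^+]}$ could be empty is if $X^+$ lies in a closed hyperplane; but a reproducing cone spans $X$, so $[X^+]=X$ and $X^+$ cannot be contained in any proper closed subspace. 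Therefore $(X^+)_N\neq\emptyset$, which settles i).

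For part ii), the strategy is to invoke Lemma~\ref{7.2} followed by the Kakutani--Bohnenblust--M.~Krein--S.~Krein Theorem~\ref{7.7}. Fix $u\in(X^+)_N$. By Lemma~\ref{7.2}(i), $(X_u,\|\cdot\|_u)$ is again a Banach lattice and $u$ is its order unit, where the lattice operations on $X_u$ are the restrictions of those on $X$ (this is exactly what the proof of Lemma~\ref{7.2}(i) establishes, since the order intervals $[-u,u]$ are closed under $\vee$ and $\wedge$). Once I know $X_u$ is a Banach lattice \emph{with a unit}, Theorem~\ref{7.7} (the equivalence i)$\Longleftrightarrow$iii)) applies verbatim and yields that $X_u$ is order isometric to $C(K)$ for the compact Hausdorff space $K$ defined as in (\ref{7.6}), namely the $w^*$-closed set of extreme points of $B_{X_u^*}^+$ evaluating to $1$ at $u$. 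This gives the desired conclusion.

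The delicate point — and the place I expect the only real friction — is bridging Lemma~\ref{7.2}(ii) (which produces an \emph{AM}-space and hence $C(K)$ directly) versus part~(i) alone. Lemma~\ref{7.2}(ii) carries the extra hypothesis that $X$ be a function space with pointwise order; for a general separable Banach lattice that hypothesis is unavailable, so I cannot appeal to (ii) and must route through the abstract characterization instead. The clean way is: Lemma~\ref{7.2}(i) gives ``Banach lattice with unit,'' and then Theorem~\ref{7.7} upgrades this to ``order isometric to $C(K)$'' without needing the AM-structure to be verified by hand — indeed Theorem~\ref{7.7} asserts i)$\Longleftrightarrow$ii)$\Longleftrightarrow$iii), so ``Banach lattice with a unit'' already forces the AM-space structure and the $C(K)$ representation. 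I would emphasize that separability of $X$ is used only in part i), to guarantee $(X^+)_N\neq\emptyset$; part ii) then holds for any lattice once a nonsupport point of the positive cone is in hand.

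In summary, the proof is a two-line deduction: $(X^+)_N\neq\emptyset$ follows from separability plus reproducibility via Lemma~\ref{2.3}(ii), and the $C(K)$-representation follows by composing Lemma~\ref{7.2}(i) with Theorem~\ref{7.7}. No new estimates or constructions are required; the entire content has been front-loaded into the generating-space construction of Theorem~\ref{3.9} and the lattice lemma, so the main obstacle is merely citing the correct earlier results in the correct order rather than any genuine analytic difficulty.
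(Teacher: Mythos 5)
Your proof is correct and takes essentially the same route as the paper: part i) is precisely the paper's appeal to Lemma \ref{2.3} (which you make explicit via the reproducibility of $X^+$, so that $X^+$ cannot lie in a closed hyperplane), and part ii) composes Lemma \ref{7.2} with Theorem \ref{7.7} exactly as the paper does. Your observation that one must route through Lemma \ref{7.2}(i) and the full chain of equivalences in Theorem \ref{7.7}, rather than the function-space case \ref{7.2}(ii), is a helpful clarification of the paper's one-line citation but not a different argument.
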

\begin{proof}
i) It follows from Lemma \ref{2.3}.

ii) It is a consequence of Lemma \ref{7.2} and Theorem \ref{7.7}. \qed
\end{proof}

\section{Latticization}

Let $X$ be a real Banach space, $\Omega=(B_{X^*},w^*)$, the closed unit ball $B_{X^*}$ of $X^*$ endowed with the weak-star topology $w^*$ of $X^*$. Let $E_{\mathscr K}$ be the closed subspace of $C(\Omega)$ consisting of all b-$w^*$-continuous positive homogenous functions, that is, all bounded weak-star continuous homogenous functions on $X^*$ but restricted to $\Omega$. It is shown in \cite{CCS} that $E_{\mathscr K}$ is a Banach lattice so that $X$ is order isometric to a subspace of $E_{\mathscr K}$. Therefore, we also call it the latticization of $X$.

We use $\mathscr K(X)$ (resp.,  $\mathscr K_0(X)$) to denote the cone of all nonempty convex compact subsets (resp., containing the origin $0$) of $X$ endowed with the Hausdorff metric $d_H$, i.e.
\begin{equation}\label{8.1}
d_H(A,B)=\inf\{r>0: A\subset B+rB_X,  B\subset A+rB_X\},\;A,B\in\mathscr K(X).
\end{equation}
Next, let $J:\mathscr K(X)\rightarrow E_{\mathscr K}$ be defined for $C\in\mathscr K(X)$ by
\begin{equation}\label{8.2}
J(C)(x^*)=\sup_{x\in C}\langle x^*,x\rangle,\;\;x^*\in\Omega.
\end{equation}
Let ``$\vee$" and ``$\wedge$"  be the usual lattice operations defined on $E_{\mathscr K}$, i.e., for all $u,v\in E_{\mathscr K}$ and $x^*\in\Omega$,
\[(u\vee v)(x^*)=\max\{u(x^*),v(x^*)\},\;\;(u\wedge v)(x^*)=\min\{u(x^*),v(x^*)\}.\]
Keep these notations in mind. Then we have the following result.

\begin{lemma}\label{8.3}
Let $X$ be a real Banach space. Then

i) $E_{\mathscr K}=\overline{J\mathscr K(X)-J\mathscr K(X)}$ is a Banach lattice;

ii) $J\mathscr K(X)-J\mathscr K(X)=J\mathscr K_0(X)-J\mathscr K_0(X)$;

iii)  $E_{\mathscr K}$ is separable if $X$ is separable.
\end{lemma}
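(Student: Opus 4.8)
The plan is to establish the three claims in the order i), iii), ii), since the separability in iii) feeds directly into the verification of ii) through a density argument, while i) sets up the ambient lattice structure. For i), I would first record that $E_{\mathscr K}$ is by definition a norm-closed subspace of $C(\Omega)$, and that the support-function map $J$ sends each compact convex $C$ to a bounded, positively homogeneous, $w^*$-continuous function on $\Omega$ (the $w^*$-continuity of $x^*\mapsto\sup_{x\in C}\langle x^*,x\rangle$ follows from compactness of $C$), so $J\mathscr{K}(X)\subset E_{\mathscr K}$. The key structural point is that $E_{\mathscr K}$ is closed under the lattice operations $\vee,\wedge$ of $C(\Omega)$: the maximum or minimum of two bounded $w^*$-continuous positively homogeneous functions is again of that type, so $E_{\mathscr K}$ is a sublattice of $C(\Omega)$, hence a Banach lattice in the sup-norm. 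Finally I must argue that $\overline{J\mathscr{K}(X)-J\mathscr{K}(X)}$ is all of $E_{\mathscr K}$; here I would invoke the representation result of \cite{CCS} quoted in the section preamble, which already identifies $E_{\mathscr K}$ as the latticization and guarantees $X$ (hence differences of support functions) generates it densely.

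For iii), separability of $E_{\mathscr K}$ when $X$ is separable, I would exhibit a countable dense subset. Since $X$ is separable, a countable dense set $\{x_n\}\subset X$ gives the functions $f_n(x^*)=\langle x^*,x_n\rangle$, each of which lies in $E_{\mathscr K}$ (linear functionals are positively homogeneous and $w^*$-continuous on $\Omega$). The sublattice generated by $\{f_n\}$ over the rationals is countable, and I would show it is dense: every support function $J(C)$ is a uniform limit on the $w^*$-compact set $\Omega$ of finite maxima $\bigvee_{k}\langle\cdot,x_{n_k}\rangle$ approximating $\sup_{x\in C}\langle\cdot,x\rangle$, using compactness of $C$ together with density of $\{x_n\}$. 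Combined with i), which expresses $E_{\mathscr K}$ as the closure of differences $J\mathscr{K}(X)-J\mathscr{K}(X)$, this yields a countable dense subset of $E_{\mathscr K}$.

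For ii), the inclusion $\supset$ is immediate since $\mathscr{K}_0(X)\subset\mathscr{K}(X)$. For $\subset$, given compact convex sets $A,B$, I would translate: for any $C\in\mathscr{K}(X)$ and $x\in X$ one has $J(C+\{x\})(x^*)=J(C)(x^*)+\langle x^*,x\rangle$, so $J(A)-J(B)=J(A-a)-J(B-a)$ whenever I shift both $A$ and $B$ by the same vector $a$; more generally $J(A)-J(B) = \big(J(A)-J(\{a\})\big)-\big(J(B)-J(\{a\})\big)$, and after translating $A$ and $B$ so as to contain the origin, the two resulting support functions come from members of $\mathscr{K}_0(X)$. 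The bookkeeping is that a linear term $\langle\cdot,a\rangle$ can be absorbed on both sides, so every difference of support functions of compact convex sets equals a difference of support functions of compact convex sets containing $0$.

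The main obstacle I anticipate is the density claim inside i) — verifying that differences $J\mathscr{K}(X)-J\mathscr{K}(X)$ are genuinely dense in $E_{\mathscr K}$ rather than merely contained in it. This is where the Banach-lattice lattice-completeness and the cited result of \cite{CCS} must do real work: one needs a Stone-Weierstrass-type argument showing that the lattice generated by linear functionals $\langle\cdot,x\rangle$ separates points of $\Omega$ and is closed enough under the lattice operations to approximate every bounded $w^*$-continuous positively homogeneous function uniformly. I would lean on \cite{CCS} for this identification and treat the remaining separability and translation arguments as the routine, self-contained parts.
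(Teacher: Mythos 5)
Parts i) and iii) of your plan are sound and essentially coincide with the paper's treatment: part i) is delegated to the cited result of \cite{CCS} in both your plan and the paper, and your approximation of $J(C)$ by finite maxima $\bigvee_k\langle\cdot,x_{n_k}\rangle$ is the same argument the paper runs, since $\max_k\langle\cdot,x_{n_k}\rangle=J\bigl(\mathrm{co}\{x_{n_1},\dots,x_{n_m}\}\bigr)$ and uniform closeness of support functions on $\Omega=B_{X^*}$ is exactly Hausdorff closeness of the underlying sets (the paper phrases this as density of rational polytopes in $\mathscr K(X)$ plus the fact that $J$ is an isometry).

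The gap is in part ii). Your identity $J(A)-J(B)=J(A-a)-J(B-a)$ for a \emph{common} translation vector $a$ is correct, but a common translation can only place the origin in both $A-a$ and $B-a$ when $A\bigcap B\neq\emptyset$, which need not hold. If instead you translate $A$ by $a\in A$ and $B$ by $b\in B$ with $a\neq b$, the identity breaks: $J(A-a)-J(B-b)=J(A)-J(B)+\langle\cdot,\,b-a\rangle$, and the assertion that this leftover linear term ``can be absorbed on both sides'' is precisely the nontrivial step you have not supplied --- $\langle\cdot,\,b-a\rangle$ is not itself the support function of a set containing the origin. The repair requires Minkowski additivity of $J$, i.e.\ $J(C+D)=J(C)+J(D)$, used in one of two equivalent ways. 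The paper's way: pick $x_0\in A$, $y_0\in B$, set $E=\mathrm{co}\{\pm x_0,\pm y_0\}$, and note that $A+E$ and $B+E$ both contain $0$ while $J(A+E)-J(B+E)=J(A)-J(B)$ since the common term $J(E)$ cancels. Alternatively, your absorption idea can be salvaged by observing that $\langle\cdot,c\rangle=J(\mathrm{co}\{0,c\})-J(\mathrm{co}\{0,-c\})$ and then folding these two sets into $A-a$ and $B-b$ via Minkowski sums, which again produces two sets containing the origin. Either version works, but as written your translation-only bookkeeping does not close.
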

\begin{proof}
i) This is just \cite[Theorem 3.2 i)]{CCS}.

ii) It is easy to check that $J\mathscr K(X)-J\mathscr K(X)=J\mathscr K_0(X)-J\mathscr K_0(X)$. Indeed, let $u=JC-JD$ for $C,D\in\mathscr K(X)$. Choose any $x_0\in C$, $y_0\in D$ and let
$A={\rm co}\{\pm x_0,\pm y_0\}$. Then $A+C,\;B+D\in\mathscr K_0(X)$ and $u=J(A+C)-J(A+D)$.

iii) Assume that $X$ is separable. Let $\{x_n\}\subset X$ be a dense subsequence of $X$, and for all $m\in\mathbb N$, let
\[\mathscr F_m=\big\{{\rm co}F: F\;{\rm is\;a\;subset\;of\;m\;elements\;of}\;\{x_n\}\big\},\]
and
\[\mathscr F=\bigcup_{n=1}^\infty\mathscr F_n.\]
Then $\mathscr F$ is a dense countable subset of $\mathscr K(X)$. Since $J:\mathscr K(X)\rightarrow J\mathscr K(X)\subset E_{\mathscr K}$ is an order isometry \cite[Theorem 2.3]{CCS},
$J\mathscr K(X)$ is separable in $E_{\mathscr K}$. Consequently, $E_{\mathscr K}$ is separable. \qed
\end{proof}

\begin{lemma}\label{8.4}
Let $X$ be a separable Banach space, and $Z\equiv E_{\mathscr K}$ be its latticization.  For any nonsupport point $u$ of the positive cone $Z^+=J\mathscr K_0(X)$ of $Z$, we obtain

i) $X\bigcap Z_u$ is a dense subspace of $X$ with respect to the norm of $X$;

ii) $X\bigcap Z_u$ is a dual space with respect to the new norm $\|\cdot\|_u$ of $Z_u$.

\end{lemma}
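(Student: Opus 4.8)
The plan is to prove both assertions from a single explicit description of $X\cap Z_u$. Since $u\in(Z^+)_N=(J\mathscr K_0(X))_N$, I would write $u=J(C_0)$ for a convex compact $C_0\ni 0$, so that $u(x^*)=\sup_{y\in C_0}\langle x^*,y\rangle=:h_{C_0}(x^*)$ for $x^*\in\Omega=(B_{X^*},w^*)$. Identifying $x\in X$ with the function $x^*\mapsto\langle x^*,x\rangle$ of its order-isometric copy in $E_{\mathscr K}$, and recalling that the lattice operations of $E_{\mathscr K}\subset C(\Omega)$ are pointwise, one checks that $x\in Z_u$ iff $|\langle x^*,x\rangle|\le\lambda\,h_{C_0}(x^*)$ on $\Omega$ for some $\lambda>0$; by the support-function (bipolar) characterization this is exactly $x\in\lambda(C_0\cap(-C_0))$. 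Hence, writing $D=C_0\cap(-C_0)$, a symmetric convex compact subset of $X$, I obtain
\[X\cap Z_u=\bigcup_{\lambda>0}\lambda D,\]
a linear subspace on which $\|\cdot\|_u$ is precisely the Minkowski gauge of $D$, so that the $\|\cdot\|_u$-unit ball of $Y:=X\cap Z_u$ is the norm-compact set $D$.

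For i), the key is to convert the nonsupport hypothesis on $u$ into a statement about $C_0$ at the origin. Because $Z^+$ is a cone, a nonzero functional supports $Z^+$ at a point $w$ precisely when it is a negative functional vanishing at $w$; hence $u\in(Z^+)_N$ is equivalent to $\langle\psi,u\rangle>0$ for every nonzero positive functional $\psi$ on $Z$. For $x^*\in\Omega$ the evaluation $\delta_{x^*}\colon z\mapsto z(x^*)$ is positive (each $J(C)$ with $0\in C$ is pointwise nonnegative) and nonzero when $x^*\ne 0$; applying the previous line to $\psi=\delta_{x^*}$ gives $h_{C_0}(x^*)=\langle\delta_{x^*},u\rangle>0$ for all $x^*\ne 0$. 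Thus $C_0$ is annihilated by no nonzero functional, so $[C_0]=X$, and no nonzero $x^*$ supports $C_0$ at $0$, i.e.\ $0\in(C_0)_{N,[C_0]}=(C_0)_{NP}$ by Lemma \ref{3.2}. Lemma \ref{3.4}, applied to $C_0$ at $0\in(C_0)_{NP}$, then shows $\bigcup_{\lambda>0}\lambda(C_0\cap(-C_0))$ is dense in $[C_0]=X$; as this set is exactly $X\cap Z_u$, assertion i) follows.

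For ii), I would realize $Y=(X\cap Z_u,\|\cdot\|_u)$ as a dual space through the classical criterion that a Banach space is isometric to a dual space as soon as its dual contains a $1$-norming subspace against which the unit ball is compact. First, $Y$ is a Banach space, being a $\|\cdot\|_u$-closed subspace of the Banach space $(Z_u,\|\cdot\|_u)$: closedness holds because the image of $X$ is $\|\cdot\|$-closed in $Z$ while, by Theorem \ref{3.9}(ii), $\|\cdot\|_u$ dominates $\|\cdot\|$. By the bipolar theorem the polar $D^{\circ}\subset X^*$ satisfies $\|y\|_u=\sup_{x^*\in D^{\circ}}|\langle x^*,y\rangle|$ for $y\in Y$, so the subspace $V:=\overline{X^*|_Y}^{\,Y^*}$ is $1$-norming, and $X^*|_Y\subset Y^*$ (again by Theorem \ref{3.9}(ii)). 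It remains to show $D=B_Y$ is $\sigma(Y,V)$-compact. On $D$ the topology $\sigma(Y,X^*|_Y)$ coincides with the weak topology $\sigma(X,X^*)$, hence with the norm topology, so $D$ is $\sigma(Y,X^*|_Y)$-compact; and since every $v\in V$ is a $\|\cdot\|_{Y^*}$-limit of restrictions of elements of $X^*$, it is on the bounded set $D$ a uniform limit of $\sigma(Y,X^*|_Y)$-continuous functions, so $\sigma(Y,V)$ and $\sigma(Y,X^*|_Y)$ agree on $D$. Therefore $D$ is $\sigma(Y,V)$-compact, and the criterion yields $Y\cong V^*$, a dual space with predual $V$.

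The main obstacle I anticipate is the explicit identification $X\cap Z_u=\operatorname{span}(C_0\cap(-C_0))$ together with the two places where compactness of $D$ is decisive: converting ``$u$ nonsupport in $Z^+$'' into ``$0$ nonsupport of $C_0$'' so that Lemma \ref{3.4} becomes applicable, and checking that the weak-type topology in which $D$ is compact genuinely agrees with $\sigma(Y,V)$ for a norming predual $V$. The bipolar theorem and the fact that the $\|\cdot\|_u$-topology dominates the $X$-topology are precisely what make both points go through.
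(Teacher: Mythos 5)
Your proposal is correct, and its backbone coincides with the paper's: both proofs rest on the identification $X\cap Z_u=\bigcup_{\lambda>0}\lambda\bigl(C_0\cap(-C_0)\bigr)$ (the paper gets it from $-u\le\lambda x\le u\Leftrightarrow\pm\lambda x\in C_0$, you get the same thing through support functions and the bipolar theorem), and both prove ii) by observing that the $\|\cdot\|_u$-unit ball of $X\cap Z_u$ is the norm-compact set $C_0\cap(-C_0)$ and invoking the Dixmier--Ng criterion --- you merely unpack that criterion by exhibiting the $1$-norming predual $V$ explicitly and checking that $\sigma(Y,V)$ agrees with the norm topology on the compact ball.

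Where you genuinely diverge is in i). The paper argues: $u\in(Z^+)_N$ implies $Z_u$ is dense in $Z$, and asserts that this ``entails'' density of $\bigcup_n n(C_0\cap(-C_0))$ in $X$; as stated this is a leap, since a dense subspace of $Z$ intersected with the closed subspace $X$ need not be dense in $X$. You instead transfer the nonsupport hypothesis downstairs: the evaluation functionals $\delta_{x^*}$ are positive on $Z^+=J\mathscr K_0(X)$, so $u\in(Z^+)_N$ forces $h_{C_0}(x^*)>0$ for every $x^*\neq0$, i.e.\ $0$ is a nonsupport point of $C_0$ in $X$ with $[C_0]=X$, and Lemma \ref{3.4} then gives the density directly. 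This route is more self-contained and in effect supplies the justification the paper's one-line ``entails'' leaves implicit; its only cost is the small extra verification that the pointwise bound $|\langle x^*,x\rangle|\le\lambda h_{C_0}(x^*)$ and the $Z^+$-order bound coincide for elements of $X$, which your bipolar argument does handle.
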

\begin{proof}
i) Let $C$ be a compact convex set of $X$ containing the origin such that $u=JC$. Since $u\in (Z^+)_N$, $Z_u=\bigcup_{n=1}^\infty n[-u,u]$ is a $\|\cdot\|$-dense subspace of $Z=E_\mathscr K$. This entails that $\bigcup_{n=1}^\infty n(C\cap-C)$  is a $\|\cdot\|$-dense subspace of $X$. Note $x\in X\bigcap Z_u$ if and only if there exists $\lambda>0$ such that $-JC=-u\leq\lambda x\leq u=JC$, which is equivalent to $\pm\lambda x\in C$. Therefore, $X\bigcap Z_u=\bigcup_{n=1}^\infty n(C\cap-C)$  is a $\|\cdot\|$-dense subspace of $X$.

ii) Since $X$ is a closed subspace of $Z=E_\mathscr K$, $X\bigcap Z_u$ is closed in $Z_u$. Note that  $x\in X\bigcap Z_u$ with $\|x\|_u\leq 1$ if and only if $x\in C\cap-C$, i.e. the closed unit ball of $X\bigcap Z_u$ is just $C\cap-C$.
Since $C\cap-C$ is compact in $X$, by the Dixier-Ng theorem (see, for instance, \cite[Theorem p.211]{75Homles}), $X\bigcap Z_u$ is a dual space with respect to the new norm $\|\cdot\|_u$ of $Z_u$. \qed
\end{proof}
\begin{theorem}\label{8.5}
Let $X$ be a separable Banach space, and $Z\equiv E_{\mathscr K}$ be its latticization.  For any nonsupport point $u$ of the positive cone $Z^+=J\mathscr K_0(X)$ of $Z$,

i) the space $Z_u$ is order isometric to $L_\infty(\mu)$ for some probability measure $\mu$;

ii) $X\bigcap Z_u$ is a closed subspace of $Z_u$ and it is also a dual space.
\end{theorem}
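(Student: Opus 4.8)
The plan is to treat the two parts separately, with ii) being essentially a packaging of earlier work and i) carrying the real content. For ii), recall from the construction of the latticization that $X$ sits as a $\|\cdot\|_Z$-closed subspace of $Z=E_{\mathscr K}$. Since the generating norm $\|\cdot\|_u$ is stronger than $\|\cdot\|_Z$ on $Z_u$ (Theorem \ref{3.9} ii)), the $\|\cdot\|_Z$-closed set $X$ meets $Z_u$ in a $\|\cdot\|_u$-closed subspace, so $X\bigcap Z_u$ is a closed subspace of $(Z_u,\|\cdot\|_u)$. That it is moreover a dual space with respect to $\|\cdot\|_u$ is exactly Lemma \ref{8.4} ii), so ii) needs nothing new.

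For i) I would fix a convex compact set $C_0\ni 0$ with $u=JC_0$ and first record, via Lemma \ref{8.3}, that $Z$ is a separable Banach lattice; since $u\in(Z^+)_N$, Theorem \ref{7.9} ii) already gives that $(Z_u,\|\cdot\|_u)$ is order isometric to some $C(K)$. The entire difficulty is to upgrade this $C(K)$ to an $L_\infty$-space, and for that the crucial claim is that $Z_u$ is a \emph{dual} Banach space.

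The key observation is that the closed unit ball $[-u,u]=(Z^+-u)\bigcap(u-Z^+)$ of $Z_u$ (Theorem \ref{3.9} iii)) is compact in the original norm $\|\cdot\|_Z$. Indeed, using Lemma \ref{8.3} ii) one checks that every element of $[-u,u]$ has the form $h_A-h_{C_0}$ with $A$ a convex compact subset of the fixed compact set $2C_0$, where $h_A=\sup_{a\in A}\langle\cdot,a\rangle$. Because $C_0$ is norm compact in $X$, the family $\{h_A:A\subset 2C_0\}$ is uniformly bounded and equicontinuous on the $w^*$-compact space $\Omega=(B_{X^*},w^*)$; by Arzel\`a--Ascoli it is relatively $\|\cdot\|_Z$-compact, and translating by the fixed function $h_{C_0}$ and using that $[-u,u]$ is $\|\cdot\|_Z$-closed shows $[-u,u]$ is $\|\cdot\|_Z$-compact. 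As $\|\cdot\|_Z$ is a Hausdorff locally convex topology on $Z_u$ weaker than $\|\cdot\|_u$, the Dixmier--Ng theorem (already invoked for Lemma \ref{8.4}) yields that $Z_u$ is a dual Banach space. A $C(K)$-space that is a dual Banach space has $K$ hyperstonean, equivalently its predual is an AL-space and hence, by Kakutani's representation theorem, an $L_1(\nu)$; thus $Z_u$ is order isometric to $L_\infty(\nu)$ for some localizable measure $\nu$.

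It remains to normalize the measure. Separability of $Z$ furnishes a strictly positive $\phi\in Z^*$ (sum a $w^*$-dense sequence of normalized positive functionals); its restriction to $Z_u$ is $\|\cdot\|_Z$-continuous, hence lies in the predual of $Z_u$ and is therefore order continuous, i.e. a strictly positive normal functional on $L_\infty(\nu)$. Such a functional is integration against a strictly positive density in $L_1(\nu)$, so after scaling so that $\phi(u)=1$ it produces a probability measure $\mu$ equivalent to $\nu$, whence $Z_u$ is order isometric to $L_\infty(\mu)$. The main obstacle, and the step I would treat most carefully, is the $\|\cdot\|_Z$-compactness of $[-u,u]$: it is precisely that the generating cone $Z^+=J\mathscr K_0(X)$ is built from support functions of a \emph{single} compact set $C_0$ that forces equicontinuity, and it is this (rather than any abstract lattice property) that distinguishes $Z_u\cong L_\infty(\mu)$ from an arbitrary $C(K)$.
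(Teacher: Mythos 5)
Your proposal is correct and follows essentially the same route as the paper: establish $Z_u\cong C(K)$ via the lattice results of Section 7, show the unit ball $[-u,u]$ is compact in the original norm of $Z$ (the paper asserts it is a ``Lipschitz set'' in $C_b(\Omega)$ where you give the explicit equicontinuity/Arzel\`a--Ascoli argument for the support functions $h_A$ with $A\subset 2C_0$), invoke the Dixmier--Ng theorem to conclude $Z_u$ is a dual space, and identify the AL-space predual as $L_1(\mu)$ by Kakutani's theorem. Your additional care in normalizing the measure to a probability measure via a strictly positive normal functional fills in a step the paper passes over silently, but the argument is the same in substance.
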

\begin{proof}
i) Since $Z$ is a separable Banach lattice, by Lemma \ref{7.2}, $Z_u$ is again a Banach lattice with its unit $u$. By Theorem \ref{7.8},
$Z_u$ is order isometric to  $C(K)$, where the Hausdorff space $K$ is defined as (\ref{7.6}). On the other hand, note that the closed unit ball of $Z_u$ is $[-u,u]$, where $u=JC$ for some compact convex set $C\subset X$ containing the origin. It is easy to see that $[-u,u]$ is a Lipschitz set in $C_b(\Omega)$, where $\Omega=(B_{X^*},w^*)$. Therefore,  $[-u,u]$ is compact in $C_b(\Omega)$. Again by the Dixier-Ng theorem, $Z_u$ is dual space. Consequently, there is an abstract $L$ space $Y$ so that $Y^*=Z_u$ (See, for example, \cite[Theorem 9.27]{06Aliprantis}).  By the Kakutani theorem, there is a probability measure $\mu$ such that $Y=L_1(\mu)$. Therefore,  $Z_u=L_1(\mu)^*=L_\infty(\mu)$. \qed

\end{proof}
\section{Exact penalization}
In this section, we will use properties of $C$-generating spaces established in the third section to generalize Ye's \emph{ exact penalty principle} \cite{12Ye}.
To begin with this section, we recall some definitions.
\begin{definition}\label{9.1}
Let $X,Y$ be Banach spaces, $Y$ be ordered by a (reproducing) cone $C$ of $Y$,  $S\subset X$ be a nonempty subset and $f: S\rightarrow Y$ be a mapping.

i) $f$ is said to be $C$-lipschitz on $S$ of rank $L_f$ with respect to $e$ if there exist a positive constant $L_f$ and an element $e\in C$ with $\|e\|=1$ such that
\[f(x)\leq f(y)+L_f\|x-y\|e,\forall x, y\in S.\]

 ii)  $f$ is called locally $C$-Lipschitz near $\bar{x}\in X$ with respect to $e$ if there is a neighborhood $U$ of $\bar{x}$ such that $f$ is $C$-Lipschitz  with respect to $e$  on $U$.\par
\end{definition}

The following problem is said to be constrained vector optimization problem.
\begin{equation}\label{original program}
\begin{split}
\min\quad\quad f(x)\\
{\rm s.t.}\quad x\in \Omega\subset S,\\
\end{split}
\end{equation}
where $X,Y$ are Banach spaces, $Y$ is ordered by a (reproducing) cone $C$ of $Y$,  $\Omega\subset S\subset X$ are  nonempty subsets and $f: S\rightarrow Y$ is a mapping.\\

The exact penalty approach aims at replacing a constrained optimization problem by an equivalent unconstrained optimization problem. Most results in the literature of exact penalization are mainly concerned with finding conditions under which a solution of the constrained optimization problem is a solution of an unconstrained penalized optimization problem, and the reverse property is rarely studied. In \cite{12Ye}, Jane J. Ye considered the reverse property. Precisely, she studied the following (unconstrained)  penalty program.

\begin{equation}\label{penalty program}
\begin{split}
\min\quad\quad f(x)+L_f d(x,\Omega)e\\
s.t.\quad\quad x\in S,\quad\quad\quad\\
\end{split}
\end{equation}
where $L_f\in\mathbb R^+$, $e\in C$ and  $d(x,\Omega)=\inf\{\|x-z\|:z\in \Omega\}$ is the distance function. She obtained the following \emph{global exact penalization for distance function} \cite[Theorem 3.1]{12Ye}, which extends \emph{Clarke's Exact Penalty Principle} from a scalar function to a vector function, and which states that the  constrained optimization problem (\ref{original program}) and the unconstrained exact penalized problem (\ref{penalty program}) under some conditions are exactly equivalent.

\begin{theorem}[Ye \cite{12Ye}]\label{9.2}
Let $X$ and $Y$ be Banach spaces, $Y$ be ordered by a convex (reproducing) cone $C$ of $Y$,  $S\subset X$ and $\Omega\subset S$ be nonempty subsets.
Let $f:S\rightarrow Y$ be $C$-Lipschitz on $S$ of rank $L_f$, and $e$ be an element in $C$ given by the $C$-Lipschitz continuity of $f$.

i) Assume  that $C\setminus\{0\}$ is an open set. Then any global minimizer of $f$ on $\Omega$ is a global minimizer of the exact penalty function $f(x)+L_f d(x,\Omega)e$ on $S$.

ii) Assume that either $S$ is closed or that $C\setminus\{0\}$ is an open set. Then, for any $L>L_f$, $x$ is a global $C$-minimizer of $f$ on $S$ if and
only if it is a global $C$-minimizer of the exact penalty function $f(x)+Ld(x, \Omega)e$ on $S$.\par
\end{theorem}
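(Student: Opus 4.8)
The plan is to derive both parts from the single $C$-Lipschitz inequality $f(x)\le f(y)+L_f\|x-y\|e$ of Definition~\ref{9.1} together with two elementary facts: the distance $d(\cdot,\Omega)$ is a $1$-Lipschitz real function, and for every $x$ and every $\varepsilon>0$ there is a feasible $z\in\Omega$ with $\|x-z\|<d(x,\Omega)+\varepsilon$. Throughout I write $F=f+L_f\,d(\cdot,\Omega)e$ and $F_L=f+L\,d(\cdot,\Omega)e$, and I recall that by (\ref{1.3}) a point $\bar x\in\Omega$ solves the constrained problem (\ref{original program}) exactly when $f(z)-f(\bar x)\notin -C\setminus\{0\}$ for every $z\in\Omega$. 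The only geometric ingredient I need about the cone is that, when $C\setminus\{0\}$ is open, it coincides with $\operatorname{int}C$ and satisfies $\operatorname{int}C+C\subset\operatorname{int}C$; this is the device that absorbs the error terms coming from the infimum in $d(x,\Omega)$.

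First I would prove i). Let $\bar x\in\Omega$ minimize $f$ on $\Omega$, so $d(\bar x,\Omega)=0$ and $F(\bar x)=f(\bar x)$; suppose toward a contradiction that some $x\in S$ gives $c:=f(\bar x)-f(x)-L_f d(x,\Omega)e\in C\setminus\{0\}$. Choosing $z\in\Omega$ with $\|x-z\|<d(x,\Omega)+\varepsilon$ and setting $p:=f(x)-f(z)+L_f\|x-z\|e\in C$ (this membership is exactly the $C$-Lipschitz inequality), I would expand
\begin{equation*}
f(\bar x)-f(z)=c+p-L_f\bigl(\|x-z\|-d(x,\Omega)\bigr)e=(c-\eta e)+p,\qquad 0\le\eta<L_f\varepsilon .
\end{equation*}
Taking $\varepsilon$ small enough that $\eta$ is below the radius of a ball around $c$ inside $\operatorname{int}C$, openness gives $c-\eta e\in\operatorname{int}C$, whence $f(\bar x)-f(z)\in\operatorname{int}C+C\subset C\setminus\{0\}$; this says $f(z)-f(\bar x)\in-C\setminus\{0\}$ with $z\in\Omega$, contradicting the minimality of $\bar x$.

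For the forward implication of ii) I would run the same computation with the penalty weight $L>L_f$, which replaces the coefficient above by $\bigl(L\,d(x,\Omega)-L_f\|x-z\|\bigr)$; when $d(x,\Omega)>0$ the strict gap $L-L_f>0$ makes this positive for small $\varepsilon$, and when $d(x,\Omega)=0$ it reduces to i). The reverse implication is the substantive half. Starting from a minimizer $\bar x$ of $F_L$ on $S$, I would first show $d(\bar x,\Omega)=0$: assuming $\delta:=d(\bar x,\Omega)>0$, pick $z\in\Omega$ nearly realizing $\delta$ and compute $F_L(\bar x)-F_L(z)=q+\bigl(L\delta-L_f\|\bar x-z\|\bigr)e$ with $q\in C$; since $L>L_f$ the scalar is positive for small $\varepsilon$, so $F_L(z)-F_L(\bar x)\in-C\setminus\{0\}$, contradicting minimality. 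Once $d(\bar x,\Omega)=0$, we have $F_L(\bar x)=f(\bar x)$, and then for $z\in\Omega$ the relation $F_L(z)-F_L(\bar x)=f(z)-f(\bar x)\notin -C\setminus\{0\}$ is precisely (\ref{1.3}), i.e.\ $\bar x$ solves (\ref{original program}).

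The main obstacle is the same at every step and is exactly where the hypotheses enter: $d(x,\Omega)$ is only an infimum, so each estimate carries an error $-\eta e$, or, in the critical case $d(x,\Omega)=0$, a coefficient that is merely nonnegative, and one must certify that the resulting vector still lies in $C\setminus\{0\}$ rather than collapsing to $0$. When $C\setminus\{0\}$ is open this is immediate from $\operatorname{int}C+C\subset\operatorname{int}C$, which simultaneously absorbs the error and forbids degeneracy. In the alternative regime where $S$ is closed the same conclusion must be reached by letting $\varepsilon\to0$ along a minimizing sequence $z_n\in\Omega\subset S$ and invoking closedness of $S$ and of the cone $C$, together with pointedness $C\cap(-C)=\{0\}$ (automatic for an ordering cone), to keep the limit off the origin. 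Handling this limiting argument in the boundary case $d(x,\Omega)=0$, where the error coefficient only tends to $0$, is the delicate point; the openness hypothesis is precisely the feature that renders it trivial.
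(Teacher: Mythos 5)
Theorem \ref{9.2} is quoted from Ye \cite{12Ye}; the paper itself supplies no proof of it, so the only thing to compare against is the proof of the generalization, Theorem \ref{9.3}. Your part i) and the open-cone regime of part ii) are correct and follow the same skeleton as that proof: assume a violation, pick an $\varepsilon$-almost-nearest $z\in\Omega$, write $f(\bar x)-f(z)$ as (cone element) $+$ (cone element) $-$ (small multiple of $e$), and absorb the error. Where you invoke $\operatorname{int}C+C\subset\operatorname{int}C$ directly from the hypothesis that $C\setminus\{0\}$ is open, the paper (working with $C=K_N\cup\{0\}$, which is \emph{not} open) manufactures the same interiority inside the generating space $Y_e$ via the equivalence theorem (Theorem \ref{3.11}), obtaining $q-\varepsilon e\in C_e$ for small $\varepsilon$. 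So your mechanism is the special case that the generating-space machinery is designed to replace; as a proof of the literal statement \ref{9.2}i) it is fine.

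The genuine gap is in the alternative regime of part ii), where only closedness (and not openness of $C\setminus\{0\}$) is assumed. Your computation handles $d(x,\Omega)>0$ correctly via pointedness, but in the boundary case $d(x,\Omega)=0$ with $x\notin\Omega$ you propose ``letting $\varepsilon\to0$ along $z_n\in\Omega$ and invoking closedness of $S$ and of the cone $C$.'' This does not go through: closedness of $S$ says nothing about whether $x\in\overline{\Omega}\setminus\Omega$ can occur, the theorem does not assume $C$ is closed, and the terms $p_n=f(x)-f(z_n)+L_f\|x-z_n\|e$ need not converge since $f$ is only $C$-Lipschitz. The identity $f(\bar x)-f(z_n)=c+p_n-L_f\|x-z_n\|e$ leaves you with a genuinely unabsorbable error $-L_f\|x-z_n\|e$ when the cone has empty interior. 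The same issue recurs at the end of the reverse implication, where $d(\bar x,\Omega)=0$ must be upgraded to $\bar x\in\Omega$ before $\bar x$ can be called a minimizer of $f$ on $\Omega$. The correct repair is not a limiting argument at all: in Ye's original statement the closedness hypothesis is on $\Omega$ (the ``$S$'' in the transcription here appears to be a slip), and closedness of $\Omega$ makes $d(x,\Omega)=0$ imply $x\in\Omega$, so the boundary case degenerates to the trivial one with zero error term. You correctly isolated this as the delicate point, but the resolution you sketch would fail as written.
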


As we have mentioned in Section 1, and Theorem \ref{8.5}, a Banach space admitting a cone with nonempty interior is an almost a $C(K)$-space. Therefore,  \emph{Ye's Global Exact Penalization Theorem} above can be understood as a perfect extension of \emph{Clarke's exact penalty principle} from a scalar function to a  vector function valued in finite dimensional spaces. On the other hand, in infinite dimensional spaces, it is limited to the class of $C(K)$-spaces. Note that for a closed reproducing cone $K$ of a Banach space $Y$ with the set $K_N$ of all nonsupport points of $K$ being nonempty, in particular, $Y$ is separable, the set $C\equiv K_N\bigcup\{0\}$ is again an almost reproducing cone.  We  extend this principle in the following manner.


\begin{theorem}[Generalized global exact penalization for distance function]\label{9.3}
Let $X$ and $Y$ be  Banach spaces,  $Y$ be  ordered by the cone $C=K_N\bigcup\{0\}$ of a closed reproducing cone $K$ with $K_N\neq\emptyset$  of  $Y$,  $S\subset X$ and $\Omega\subset S$ be two nonempty subsets. Assume that $f:S\rightarrow Y$ is $C$-Lipschitz on $S$ of rank $L_f$, and that $e$ is the element in $C$ given by the $C$-Lipschitz continuity of $f$. Then we have the following assertions.

i) Every global $C$-minimizer of $f$ on $\Omega$ is a global $C$-minimizer of the exact penalty function $f(x)+L_f d(x,\Omega)e$ on $S$.\par

ii) For every $L>L_f$, a global $C$-minimizer  of the exact penalty function $f(x)+Ld(x, \Omega)e$ on $S$ if and only if it is a global $C$-minimizer of $f$ on $\Omega$.

\end{theorem}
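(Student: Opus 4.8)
The plan is to reduce both assertions to Ye's Theorem~\ref{9.2} by transporting the entire problem into the $(K,e)$-generating space $Y_e=K_e\cap(-K_e)$, where the ordering cone acquires an interior and so meets Ye's standing hypothesis. Since $f$ is $C$-Lipschitz with respect to $e$ and $C=K_N\cup\{0\}$, I may assume $e\in K_N$ (the case $e=0$ being degenerate), so that $e$ is an admissible generating point and $Y_e$ is well defined with the norm $\|\cdot\|_e$ of Theorem~\ref{3.9}.

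First I would record the algebraic backbone: the absorption property $K_N+K\subseteq K_N$. Indeed, for $n\in K_N$ and $k\in K$ one has $K-n\subseteq K-(n+k)$ because $K+k\subseteq K$, whence $K_n\subseteq K_{n+k}$; density of $K_n$ (Lemma~\ref{2.3}(iv)) then forces density of $K_{n+k}$, i.e. $n+k\in K_N$. In particular $C=K_N\cup\{0\}$ is a convex cone. Reading the $C$-Lipschitz inequality in both orders, $f(x)\leq f(y)+L_f\|x-y\|e$ and $f(y)\leq f(x)+L_f\|x-y\|e$, yields $f(x)-f(y)\in L_f\|x-y\|\,[-e,e]$, so that $f(S)-f(S)\subset Y_e$. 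Fixing $x_0\in S$ and setting $g:=f-f(x_0)$, the translated map sends $S$ into $(Y_e,\|\cdot\|_e)$, and since the penalty term $L\,d(x,\Omega)e$ is a multiple of $e\in Y_e$, the penalized map does too. As the definition of a global $C$-minimizer involves only differences $g(\Omega)-g(\bar x)$ intersected with $-K_N$, all of which lie in $Y_e$, the minimizer notions for $f$ and for $g$ (penalized or not) coincide verbatim, while $X$, $\|\cdot\|_X$, $d(\cdot,\Omega)$ and $L_f$ are untouched by this transport.

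Next I would invoke the generating-space structure. By Theorems~\ref{3.6} and~\ref{3.9} (with $K$ in the role of the cone), $[-e,e]=(K-e)\cap(e-K)$ is the closed unit ball of the Banach space $(Y_e,\|\cdot\|_e)$, the norm $\|\cdot\|_e$ dominates $\|\cdot\|$, and $e$ is an order unit: if $\|x-e\|_e\leq r<1$ then $(1-r)e\leq x\leq(1+r)e$, so $x\in K$. Hence $e\in\mathrm{int}_{\|\cdot\|_e}(K\cap Y_e)$, and by Lemma~\ref{2.3}(iii) the interior of the positive cone $K\cap Y_e$ of $Y_e$ is nonempty and coincides with its own nonsupport set. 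Taking $C'=\mathrm{int}_{\|\cdot\|_e}(K\cap Y_e)\cup\{0\}$, the set $C'\setminus\{0\}$ is open, so Ye's Theorem~\ref{9.2} applies to $g$ and to its penalization in $(Y_e,\|\cdot\|_e)$; transporting back by adding $f(x_0)$ would then deliver statements phrased in terms of $C'$.

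The main obstacle is exactly that $C'$ is in general strictly smaller than the cone $C\cap Y_e=(K_N\cap Y_e)\cup\{0\}$ that governs the minimizer notion I must actually control: the interior of $K\cap Y_e$ is the order-unit set $\{x:x\geq\varepsilon e\ \text{for some}\ \varepsilon>0\}$, whereas $K_N\cap Y_e$ can be larger and need not be $\|\cdot\|_e$-open (for $Y=L_p$ and $e=\mathbf 1$ one gets $Y_e=L_\infty$ with $K_N\cap Y_e=\{x>0\ \text{a.e.}\}$, which is not $\|\cdot\|_\infty$-open). Thus a bare appeal to Theorem~\ref{9.2} only settles $C'$-minimizers, and the crux is to upgrade this to genuine $C$-minimizers. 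For this I would exploit the absorption property $K_N+K\subseteq K_N$ to rerun Ye's estimates directly: failure of $\bar x$ to penalize-minimize yields $f(\bar x)-f(w)+\alpha_w e\in K_N$ for all $w\in\Omega$, with $\alpha_w=L_f(\|z-w\|-d(z,\Omega))\geq0$ and $\inf_{w\in\Omega}\alpha_w=0$, and the remaining task is to absorb the vanishing residual $\alpha_w e$. Completeness of $(Y_e,\|\cdot\|_e)$ (Theorem~\ref{3.6}(iii)) and, in the separable case, the isometric embedding of $Y_e$ into $\ell_\infty$ (Theorem~\ref{3.5'}) are the tools I would use to make this limiting argument rigorous; it is precisely this absorption/limiting step, rather than the routine transport or the invocation of Theorem~\ref{9.2}, where the real difficulty lies.
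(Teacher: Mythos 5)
Your setup (transporting to $Y_e$, noting $f(S)-f(S)\subset Y_e$, and observing that $e$ becomes an order unit so that $\mathrm{int}_{\|\cdot\|_e}(K\cap Y_e)\neq\emptyset$) is sound, and you are right that a bare appeal to Ye's Theorem~\ref{9.2} with $C'=\mathrm{int}_{\|\cdot\|_e}(K\cap Y_e)\cup\{0\}$ only controls $C'$-minimizers. But having diagnosed that the real content is the absorption of the residual $\alpha_w e$, you stop there: you state that completeness of $(Y_e,\|\cdot\|_e)$ and (in the separable case) the embedding of $Y_e$ into $\ell_\infty$ ``are the tools I would use,'' without carrying out the argument. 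This is a genuine gap, and the tools you name would not close it. A limiting argument cannot work, because $K_N$ is not closed ($\overline{K_N}=K$), so from $f(\bar x)-f(w_n)+\alpha_n e\in K_N$ with $\alpha_n\downarrow 0$ one can at best conclude membership in $K$ in the limit, which does not contradict $C$-minimality; and the theorem does not assume separability, so Theorem~\ref{3.5'} is unavailable. Indeed the pointwise claim ``$p\in K_N\Rightarrow p-\varepsilon e\in K_N$ for small $\varepsilon$'' is simply false in general (take $Y=\ell_1$, $K=\ell_1^+$, $e=(2^{-n})_n$, $p=(4^{-n})_n$: then $p-\varepsilon e\notin K$ for every $\varepsilon>0$), so the absorption must be extracted from quantitative information specific to the penalization setup, not from $p\in K_N$ alone.

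The idea you are missing is the one the paper uses. The strict inequality $f(\bar z)+L_f d(\bar z,\Omega)e<f(\bar x)$ together with the Lipschitz bound squeezes $q:=f(\bar x)-f(\bar z)$ between $L_f d(\bar z,\Omega)e$ and $L_f\|\bar x-\bar z\|e$; when $d(\bar z,\Omega)>0$ (which in part~(ii) is secured by first ruling out $\bar x\in\overline{\Omega}\cap S$), this makes $q$ order-equivalent to $e$, so by the Equivalence Theorem~\ref{3.11} $Y_q=Y_e$ with equivalent norms and $q$ lies in the $\|\cdot\|_e$-interior of the cone generated at $e$. Hence $q-\varepsilon e$ dominates a positive multiple of $e$ for all small $\varepsilon>0$ and therefore stays in $K_N$ by the very absorption property $K_N+K\subseteq K_N$ that you proved; feeding this into the choice of $x_\varepsilon\in\Omega$ with $\|\bar z-x_\varepsilon\|<\varepsilon+d(\bar z,\Omega)$ yields $f(x_\varepsilon)<f(\bar x)$ and the contradiction. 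In part~(ii) one additionally needs the margin $L>L_f$: one chooses $\varepsilon$ with $L_f\varepsilon<(L-L_f)d(\bar x,\Omega)$, which is where $d(\bar x,\Omega)>0$ is used. None of this appears in your proposal, and without it the proof is not complete.
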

\begin{proof}
 i)\; Suppose, to the contrary, that there exists a global minimizer $\bar{x}\in \Omega$ of the program (\ref{original program}) on $\Omega$ but it is not a global minimizer of the program (\ref{penalty program}) on $S$. Then there exists $\bar{z}\in S$ such that
\begin{equation}\label{9.0}
\begin{split}
f(\bar{z})+L_fd(\bar{z},\Omega)e<f(\bar{x}).
\end{split}
\end{equation}
Therefore, $f(\bar{x})-f(\bar{z})\in C\setminus\{0\}=K_N$. Since $f:S\rightarrow Y$ is $C$-Lipschitz of rank $L_f$ with respect $e$,
\[-L_f\|\bar{z}-\bar{x}\|e\leq f(\bar{x})-f(\bar{z})\leq L_f\|\bar{z}-\bar{x}\|e.\]
This and (\ref{9.0}) lead to
\begin{equation}\label{control eachother}
\begin{split}
L_fd(\bar{z},\Omega)e<f(\bar{x})-f(\bar{z})\leq L_f\|\bar{z}-\bar{x}\|e.
\end{split}
\end{equation}

Denote $q=f(\bar{x})-f(\bar{z})$. Then $q\in K_N$. Note $\overline{C}=K$. Then by Theorem \ref{3.9}, the following two $K$-generating spaces
\[Y_e=\bigcup\limits_{\lambda>0}\lambda(K_e\bigcap(-K_e)),\; Y_q=\bigcup\limits_{\lambda>0}\lambda(K_q\bigcap(-K_q))\]
are Banach spaces with respect to their new norms $\|\cdot\|_e$ and $\|\cdot\|_q$ defined by
 \begin{align*}
\|x\|_e&=\inf\{\lambda>0:x\in \lambda((C-e)\bigcap(e-C))\}\\
&=\inf\{\lambda>0:x\in [-e,e]\},\;\; x\in Y_e,
 \end{align*}
and
 \begin{align*}
\|x\|_q&=\inf\{\lambda>0:x\in \lambda((C-q)\bigcap(q-C))\}\\
&=\inf\{\lambda>0:x\in [-q,q]\},\;\; x\in Y_q.
 \end{align*}
It follows from (\ref{control eachother}) and Theorem \ref{3.11}, $Y_e=Y_q$ algebraically, and that $\|\cdot\|_e$, $\|\cdot\|_q$ are equivalent.
Clearly, $e,q\in{\rm int}{C_e}\bigcap{\rm int}{C_q}={\rm int}{C_e}$, where $C_e=\bigcup_{\lambda>0}\lambda(C-e)$. Consequently, there is $\delta>0$ so that
\begin{equation}\label{9.4}
f(\bar{x})-f(\bar{z})-\varepsilon e=q-\varepsilon e\in C_e,\;\;\;{\rm for\;all\;}\;0<\varepsilon<\delta.
\end{equation}
It follows from (\ref{control eachother}) and (\ref{9.4}) that for all sufficiently small $0<\varepsilon<\delta$,
\begin{equation}\label{crucial equation}
f(\bar{x})-f(\bar{z})-\varepsilon e>L_fd(\bar{z},\Omega)e.
\end{equation}
On the other hand,  there exists $x_\varepsilon\in \Omega$ such that
\[\|\bar{z}-x_\epsilon\|<\varepsilon+d(\bar{z},\Omega).\]
This and (\ref{crucial equation}) imply
\begin{equation}
\begin{split}
f(x_\varepsilon)\leq f(\bar{z})+L_f\|\bar{z}-x_\varepsilon\|e< f(\bar{z})+L_f(\varepsilon+d(\bar{z},\Omega))e<f(\bar{x}).
\end{split}
\end{equation}
This is a contradiction!\\

ii) Sufficiency.\; It  follows from i) we have just proven.

Necessity.\; Suppose, to the contrary, that there exists a global $C$-minimizer $\bar{x}\in S$ of the penalty function $f(x)+Ld(x,\Omega)e$ on $S$ but it is not a global $C$-minimizer of the program (\ref{original program}) on $\Omega$. If $\bar{x}\in \overline{\Omega}\bigcap S$, then there exists $\bar{z}\in \Omega$ such that
\begin{equation}\label{9.5}
\begin{split}
f(\bar{x})>f(\bar{z}).
\end{split}
\end{equation}
This and $d(\bar{x},\Omega)=0=d(\bar{z},\Omega)$ imply that
\begin{equation}\nonumber
\begin{split}
f(\bar{x})+Ld(\bar{x},\Omega)=f(\bar{x})>f(\bar{z})=f(\bar{z})+Ld(\bar{z},\Omega),
\end{split}
\end{equation}
which is a contradiction to that $\bar{x}$ is a  global $C$-minimizer of the penalty function  $f({x})+Ld({x},\Omega)$ on $S$. Therefore, $\bar{x}\notin \overline{\Omega}\bigcap S$.
It follows from (\ref{9.5}) that
\[0<q\equiv f(\bar{x})-f(\bar{z})\in C\setminus\{0\}=K_N.\] 
By Lipschitz continuity of $f$,  
\[q=f(\bar{x})-f(\bar{z})\leq L_f\|\bar{x}-\bar{z}\|e.\] Therefore, $[-q,q]\subset[-\beta e,\beta e]$, where $\beta=L_f\|\bar{x}-\bar{z}\|.$
Consequently,  $Y_q\subset Y_e$ algebraically, and $q\in{\rm int}C_e\bigcap K_N$.
Hence, there is $\delta>0$ so that
\begin{equation}\label{9.6}
f(\bar{x})-f(\bar{z})-\varepsilon e=q-\varepsilon e\in {\rm int}C_e\bigcap K_N,\;\;\;{\rm for\;all\;}\;0<\varepsilon<\delta.
\end{equation}
On the other hand,  for each such $\varepsilon>0$, there exists $x_\varepsilon\in \Omega$ such that
\[\|\bar{x}-x_\epsilon\|<\varepsilon+d(\bar{x},\Omega).\]
This, $d({x_\varepsilon},\Omega)=0$  and Lipschitz continuity of $f$ imply
 \begin{align*}
 f(x_\varepsilon)&+Ld(x_\varepsilon,\Omega)=f(x_\varepsilon)\\
&\leq f(\bar{x})+L_f\|x_\varepsilon-\bar{x}\|e\\
&< f(\bar{x})+L_f(\varepsilon+d(\bar{x},\Omega))e\\
&= f(\bar{x})+L_fd(\bar{x},\Omega)e+L_f\varepsilon e.
 \end{align*}
Note that $L>L_f$ and $d(\bar{x},\Omega)>0$. By (\ref{9.6}), we choose $0<\varepsilon<\delta$ so that \[L_f\varepsilon<(L-L_f)d(\bar{x},\Omega).\]
Then it follows
 \[f(x_\varepsilon)+Ld(x_\varepsilon,\Omega)e<f(\bar{x})+Ld(\bar{x},\Omega)e.\]
This is a contradiction to that  $\bar{x}$ is a  global $C$-minimizer of the penalty function  $f({x})+Ld({x},\Omega)e$ on $S$. \qed
\end{proof}
\begin{remark}
Parallel to  the \emph{generalized global exact penalization for distance function},  we can show a \emph{generalized local exact penalization for distance function} in the same way, which can be regarded as an extension of \emph{Ye's local exact penalization for distance function} \cite[Theorem 3.2]{12Ye}.
\end{remark}

\section{Convex scalar optimization}
In this section, we consider solvability of the box constraint of Lagrange model (Problem \ref{problem 1.4}) and the Lagrange duality model (Problem \ref{problem 1.5}) mentioned in Section 1. We have already known that every Banach space $X$ is contained in its latticization $E_{\mathscr K}$, and the density character of $E_{\mathscr K}$ is the same as that of $X$ (Lemma \ref{8.3}). Therefore, without loss of generality, we can assume that every Banach space $X$ in question is a subspace of a  Banach lattice. Thus, $X^+$ and  $\mid x\mid=x\vee-x$ for every $x\in X$ are meaningful.

 Now, we restate Problems \ref{problem 1.4} and \ref{problem 1.5} as follows.
 \begin{problem}[Lagrange model with box constraint]\label{10.1}
\begin{equation}\label{10.2}
\begin{split}
& \min\quad f(x)\\
& s. t. \quad g(x)\in -Y^+,\\
& \quad \quad\; h(x)=0_Z,\\
& \quad \quad\; x\in \Omega=\{x\in X:\;x_a\leq x\leq x_b\},\\
\end{split}
\end{equation}
\end{problem}
\noindent
where $X,Y,Z$ are Banach spaces and $X,Y$ are ordered by their corresponding ordering cones $X^+$, $Y^+$ with $(X^+)_N\neq\emptyset\neq(Y^+)_N$ , and $f: X\rightarrow \mathbb{R}\cup\{+\infty\}$ is a lower semicontinuous convex function with $[\frac{1}rx_a,rx_b]\subset{\rm dom}f$ for some $r>1$, $g:X\rightarrow Y$ is $Y^+$-Lipschitz on $\Omega$ of rank $L_g$  with respect to some $e\in(Y^+)_N$ with $\|e\|=1$ (Definition \ref{9.1})  and convex-like with respect to $Y^+$ (Definition \ref{2.12}) and $h:X\rightarrow Z$ is a continuous affine  function, and $x_a,x_b\in X$ with $x_b-x_a\in (X^+)_N$. Denote $S=\{x\in \Omega:\;g(x)\in -Y^+,h(x)=0_Z\}$.
\begin{proposition}\label{convex like proposition}
Suppose $g$ is convex-like respect to $Y^+$ and $g$ is $Y^+$-Lipschitz respect to $e\in (Y^+)_N$ in $\Omega$, then $g$ is also convex-like respect to $Y_e^+$ in $\Omega$.
\end{proposition}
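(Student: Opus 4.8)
The plan is to reduce the statement to a pointwise ``defect'' estimate and then to close the gap between the two cones using the Lipschitz hypothesis. First I would unwind Definition \ref{2.12}: the convex-likeness of $g$ with respect to a cone $K$ on $\Omega$ is equivalent to the selection property that for all $x_1,x_2\in\Omega$ and $\lambda\in[0,1]$ there is some $x_3\in\Omega$ with
\[
g(x_3)\le_K \lambda g(x_1)+(1-\lambda)g(x_2),
\]
where $\le_K$ denotes the order induced by $K$. The forward implication is obtained by taking the added vectors to be $0$ in the set $\{g(x)+y\}$; the reverse implication reassembles the convex combination of two representatives. I would also record that, since $e\in(Y^+)_N$, the generating space is the order ideal $Y_e=\{y\in Y:\ -\mu e\le y\le\mu e \text{ for some }\mu>0\}$, that its positive cone is $Y_e^+=Y^+\cap Y_e$, and hence that convex-likeness with respect to $Y_e^+$ is \emph{a priori stronger} than convex-likeness with respect to $Y^+$, because $Y_e^+\subset Y^+$ and the corresponding order is more restrictive.

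With this reformulation, the proof amounts to the following. Fix $x_1,x_2\in\Omega$ and $\lambda\in[0,1]$. Convex-likeness of $g$ with respect to $Y^+$ supplies an $x_3\in\Omega$ for which the defect
\[
v:=\lambda g(x_1)+(1-\lambda)g(x_2)-g(x_3)
\]
lies in $Y^+$. To upgrade this to $v\in Y_e^+=Y^+\cap Y_e$ it remains only to show $v\in Y_e$, i.e.\ that $v$ is order-bounded by a multiple of $e$. This is exactly where the $Y^+$-Lipschitz property of $g$ with respect to $e$ enters: from $g(x)\le g(y)+L_g\|x-y\|e$ (Definition \ref{9.1}), applied with its arguments in both orders, one obtains the two-sided estimate
\[
-L_g\|x_i-x_3\|e\le g(x_i)-g(x_3)\le L_g\|x_i-x_3\|e,\qquad i=1,2.
\]
Forming the convex combination with weights $\lambda$ and $1-\lambda$ yields $-\mu e\le v\le\mu e$ with $\mu=L_g\bigl(\lambda\|x_1-x_3\|+(1-\lambda)\|x_2-x_3\|\bigr)<\infty$, so $v\in[-\mu e,\mu e]\subset Y_e$. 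Hence $v\in Y_e^+$, which is precisely the selection property for the cone $Y_e^+$; equivalently, the set $\{g(x)+y:\ x\in\Omega,\ y\in Y_e^+\}$ is convex, as required by Definition \ref{2.12}.

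The step I expect to be the only genuinely delicate one is ensuring that the \emph{same} $x_3$ delivered by $Y^+$-convex-likeness also produces membership in the smaller ideal $Y_e$. There is in fact no tension, because the Lipschitz estimate holds for \emph{every} $x_3\in\Omega$; thus once $x_3$ is chosen so that $v\in Y^+$, its order-boundedness by a multiple of $e$ is automatic, and $v$ lands in $Y_e^+$ with no further choice. I would emphasize in the write-up that the Lipschitz hypothesis is not cosmetic: without it, convex-likeness with respect to the strictly smaller cone $Y_e^+$ would not follow from convex-likeness with respect to $Y^+$ alone, since the defect $v$ could then fail to be dominated by any multiple of $e$ and would escape the ideal $Y_e$.
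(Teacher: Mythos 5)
Your proof is correct and follows essentially the same route as the paper's: both arguments take the point supplied by $Y^+$-convex-likeness and use the two-sided Lipschitz estimate to force the slack vector into the order ideal $Y_e$, hence into $Y_e^+=Y^+\cap Y_e$. If anything, your write-up is slightly more careful, since the paper's proof asserts that $g(x_1),g(x_2),g(x_\alpha)$ themselves lie in $Y_e$ (which the hypotheses do not directly give), whereas you work only with the differences $g(x_i)-g(x_3)$, which is exactly what the Lipschitz condition controls and all that the argument needs.
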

\begin{proof}
Denote $A_1=\{f(x)+y:\;y\in Y^+,\;x\in \Omega\}$, $A_2=\{f(x)+y:\;y\in Y_e^+,\;x\in \Omega\}$.
For any $x_1,\;x_2\in \Omega$, $y_1,y_2\in Y_e^+$, any $\alpha\in [0,1]$,
since $g$ is convex-like respect to $Y^+$, there exist $x_\alpha\in \Omega$ and $y_\alpha\in Y^+$ such that \[g(x_\alpha)+y_\alpha=\alpha(g(x_1)+y_1)+(1-\alpha)(g(x_2)+y_2)\in A_1.\]
Because $g(x_1), g(x_2), g(x_\alpha), y_1, y_2$ are all in $Y_e$, we have $g(x_\alpha)+y_\alpha\in Y_e$ which implies $y_\alpha\in Y^+_e$ and $g(x_\alpha)+y_\alpha\in A_2$. Thus $g$ is also convex-like respect to $Y^+_e$ in $\Omega$.\qed \par
\end{proof}

\begin{problem}[Modified Lagrange duality model]\label{problem 10.3}
\begin{equation}\label{10.4}
\begin{split}
&\max\limits_{(y^*,z^*,x_1^*,x_2^*)}\inf\limits_{x\in X_\pi} f(x)+\left<(y^*,x_1^*,x_2^*),\;(g(x),x-x_a,x_b-x)\right>+\left<z^*,\;h(x)\right>\\
&s.t. \quad\quad y^*\in Y_{e'}^{*+},\;x_1^*,\;x_2^*\in X_\pi^{*+},\;z^*\in Z^*,
\end{split}
\end{equation}
where $X_\pi$ (resp.,  $Y_{e'}$ ) is the $(X^+,\pi)$ (resp., $(Y^+,e')$ )-generating space for some $\pi\in(X^+)_N$ (resp.,  $e'\in(Y^+)_N$)  and $X_{\pi}^{*+}$ (resp., $Y_{e'}^{*+}$) is the positive cone of its dual $X_{\pi}^*$ (resp., $Y_{e'}^*$).
\end{problem}

\begin{theorem}\label{10.5}[Modified Slater's condition]
If the primal program (\ref{10.2}) is solvable and satisfying that there is $\bar{x}\in S$ such that there exists $\lambda>0$, $-\lambda g(\bar{x})>e$ ($e$ is from Lipschitz property of $g$), $h(\bar{x})=0$, and such that $h(\Omega)$ contains a neighborhood of $0$, then the dual program (\ref{10.4}) is also solvable, that is, it admits an optimal solution, and the extremal values of the two programs are equal.
\end{theorem}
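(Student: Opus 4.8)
The plan is to reduce the duality claim to the classical Lagrange strong-duality theorem under Slater's condition by transporting the entire program into the generating spaces $X_\pi$ and $Y_{e'}$, where the relevant ordering cones acquire nonempty interior. Concretely, I would fix $e'=e$ (the Lipschitz vector of $g$) and choose the generating point $\pi\in(X^+)_N$ equivalent to $x_b-x_a$, say $\pi=\tfrac12(x_b-x_a)$; this is legitimate because $(X^+)_N$ is invariant under positive scaling and, by the Equivalence Theorem \ref{3.11}, any such choice yields the same space $X_\pi$ up to an equivalent norm. By Theorems \ref{3.6} and \ref{3.9}, $X_\pi$ and $Y_{e'}$ are Banach spaces, densely and continuously contained in $X$ and $Y$, with $e'=e\in\mathrm{int}_{Y_{e'}}Y_{e'}^+$; moreover at the center $c=\tfrac12(x_a+x_b)$ one has $c-x_a=x_b-c=\tfrac12(x_b-x_a)=\pi$, so $c-x_a,\,x_b-c\in\mathrm{int}_{X_\pi}X_\pi^+$, i.e. $c\in\mathrm{int}_{X_\pi}\Omega$.

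First I would verify that passing to $X_\pi$ alters neither the feasible set nor the primal value, so that the inner infimum over $X_\pi$ in (\ref{10.4}) is meaningful. Every $x\in\Omega$ satisfies $0\le x-x_a\le x_b-x_a$, hence $x-x_a$ lies in the scaled unit ball $[-(x_b-x_a),\,x_b-x_a]$ of $X_\pi$; thus $S\subset x_a+X_\pi$ and the minimization may be carried out inside the affine slice $x_a+X_\pi$. Since the $\|\cdot\|_\pi$-topology is finer than $\|\cdot\|$, the restriction of $f$ remains lower semicontinuous and convex, $h$ remains continuous and affine, and by Proposition \ref{convex like proposition} $g$ is convex-like with respect to the solid cone $Y_{e'}^+$. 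The domain hypothesis $[\tfrac1r x_a,\,r x_b]\subset\mathrm{dom}f$ for some $r>1$ is designed precisely to place $\Omega$ in the $\|\cdot\|_\pi$-interior of $\mathrm{dom}f$, so that $f$ (convex and lsc) is continuous on an $X_\pi$-neighborhood of $\Omega$; this continuity point is exactly what powers the Moreau--Rockafellar sum rule (Theorem \ref{Moreau-Rockafellar theorem}) in $X_\pi$, the tool that fails in $Y$ when $\mathrm{int}\,Y^+=\emptyset$ but succeeds in $Y_{e'}$.

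Next I would assemble a single Slater point for the three inequality-type constraints $-g(x)\in Y_{e'}^+$, $x-x_a\in X_\pi^+$, $x_b-x\in X_\pi^+$. The hypothesis $-\lambda g(\bar x)>e$ means $-g(\bar x)\in\mathrm{int}_{Y_{e'}}Y_{e'}^+$, so $\bar x$ is strictly feasible for the $g$-constraint, while $c$ is interior to the box. Setting $x_t=(1-t)\bar x+tc$ and using that strict feasibility for $g$ and box-interiority are open conditions preserved under the finer norm, for small $t>0$ the point $x_t$ is simultaneously strictly feasible for all three inequality constraints; the affine equality $h(x)=0$ is treated separately through the regularity hypothesis that $h(\Omega)$ contains a neighborhood of $0$, which yields $0\in\mathrm{int}\,h(\Omega)$ and hence the constraint qualification for $h$. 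With solid cones, a continuity point of $f$, convexity and convex-likeness of the data, and this combined Slater-plus-range condition in force, the classical strong Lagrange duality theorem applies in $X_\pi,Y_{e'}$ and delivers multipliers $y^*\in Y_{e'}^{*+}$, $x_1^*,x_2^*\in X_\pi^{*+}$, $z^*\in Z^*$ attaining the dual supremum with zero gap, which is exactly solvability of (\ref{10.4}) together with equality of the two extremal values.

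The step I expect to be the main obstacle is the Slater-point construction: reconciling the strict $g$-feasibility carried by $\bar x$ with the box-interiority carried by $c$ while respecting $h(x)=0$, since the naive combination $x_t$ destroys $h(x_t)=0$ unless $h(c)=0$. The device that rescues this is to dualize the equality constraint via the range condition $0\in\mathrm{int}\,h(\Omega)$ rather than via a strictly feasible point, so that the Slater argument is needed only for the (now solid) inequality cones. Verifying that the classical multiplier theorem genuinely tolerates this split — an interior/Slater condition for the inequalities together with a surjectivity/interior-range condition for the affine equality — and that every object truly lives in, and is continuous for, the finer $X_\pi$- and $Y_{e'}$-topologies, is where the technical care concentrates.
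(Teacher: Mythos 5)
Your overall strategy --- transporting the program into the generating spaces $X_\pi$ and $Y_{e'}$ so that the ordering cones become solid and then invoking a classical Slater-type strong duality theorem --- is exactly the route the paper takes, but the specific choice $e'=e$ breaks the argument at its first load-bearing step. The hypothesis $-\lambda g(\bar x)>e$ only bounds $-g(\bar x)$ from \emph{below} by a multiple of $e$; it gives no upper bound $-g(\bar x)\leq Me$, so there is no reason for $-g(\bar x)$ to belong to $Y_e=\bigcup_{n}n[-e,e]$ at all. (Take $Y=\ell_1$, $e$ proportional to $(2^{-n})_n$ and $-g(\bar x)=(1/n^2)_n$: then $-g(\bar x)\geq \lambda^{-1}e$ for suitable $\lambda$, yet $-g(\bar x)\notin Y_e$.) Consequently the assertion that ``$-\lambda g(\bar x)>e$ means $-g(\bar x)\in\mathrm{int}_{Y_{e'}}Y_{e'}^+$'' is false for $e'=e$; worse, the translated constraint $g(x)\in-Y_e^+$ need not be equivalent to $g(x)\in-Y^+$ on $\Omega$ (it could even be infeasible), since the Lipschitz property only places the \emph{differences} $g(x)-g(y)$ in $Y_e$, not the values themselves. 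The paper's remedy is to adapt the generating direction to the Slater point: $e'=(e-g(\bar x))/\|e-g(\bar x)\|$. With this choice $e'\geq\|e-g(\bar x)\|^{-1}e$, so $g$ remains $Y^+$-Lipschitz with respect to $e'$, $g(\bar x)\in Y_{e'}$ by construction, hence $g(\Omega)\subset Y_{e'}$ and $-g(\bar x)\in\mathrm{int}_{\|\cdot\|_{e'}}Y_{e'}^+$ --- all three facts you need and none of which $e'=e$ delivers. A smaller point: you should normalize the box first (the paper shifts so that $x_a,x_b\in(X^+)_N$ and takes $\pi=x_b$) so that $\Omega$ sits inside the linear space $X_\pi$ rather than in the affine translate $x_a+X_\pi$; with $\pi=\tfrac12(x_b-x_a)$ the center $c$ need not lie in $X_\pi$, and the inner infimum of (\ref{10.4}) over $x\in X_\pi$ may then miss $\Omega$ entirely.

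The second gap is the one you flag but do not close: your candidate Slater point $x_t=(1-t)\bar x+tc$ is strictly feasible for $g$ and interior to the box, but $h(x_t)=t\,h(c)\neq 0$ in general, so it is not feasible. The Slater-type theorems in which an affine equality is handled by the range condition $0\in\mathrm{int}\,h(\Omega)$ still require the distinguished point to \emph{satisfy} the equality; the conditions cannot simply be split between two different points. The paper avoids the problem by never asking for a point interior to the box: it applies the cited duality theorem (Causa--Mastroeni--Raciti; Jahn) with the box as the underlying convex constraint set and with $\bar x$ itself as the Slater point, i.e. $g(\bar x)\in-\mathrm{int}(Y_{e'}^+)$, $h(\bar x)=0$ and $0\in\mathrm{int}\,h(\Omega)$. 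If you insist on certifying the box multipliers through a point interior to the box, you would need $h^{-1}(0)\cap\mathrm{int}_{X_\pi}\Omega\neq\emptyset$, which is not among the hypotheses; otherwise you must fall back on the formulation the paper uses.
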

\begin{proof}
Without loss of generality, we can assume $x_a, x_b\in (X^+)_N$. Otherwise, we substitute successively, $x_c=x_b-x_a$ for $x_a$ and  $x_d=2(x_b-x_a)$ for  $x_b$;
$\tilde{f}(x)=f(x-x_b+2x_a)$ for $f(x)$;   $\tilde{g}(x)=g(x-x_b+2x_a)$ for $g(x)$; $\tilde{h}(x)=h(x-x_b+2x_a)$ for $h(x)$, and $\Omega'=\{x\in X: x_c\leq x\leq x_d\}$ for $\Omega$ in the programs (\ref{10.2})  and (\ref{10.4}).

Let $\pi=x_b$ and $(X_\pi,\|\cdot\|_\pi)$ be the $(X^+,\pi)$-generating space. Then there exists $r>1$ such that
\begin{equation}\label{10.5'}\Omega\subset[\frac{1}rx_a,rx_b]\subset{\rm int}_{\|\cdot\|_\pi}(X^+\bigcap X_\pi^+).\end{equation}
Since $f$ is real-valued and lower semicontinous convex on $[\frac{1}rx_a,rx_b]$, $f$ is  lower semicontinous convex on $[\frac{1}rx_a,rx_b]$ with respect to the norm $\|\cdot\|_\pi$. (\ref{10.5'}) implies that $f$ is continuous (hence, locally Lipschitz) on $(\frac{1}rx_a,rx_b)$. Therefore, $f$ is  locally Lipschitz on $\Omega$.

Let $\bar{x}\in S$ such that $g(\bar{x})\in -(Y^+)_N$, $h(\bar{x})=0$, and such that $h(\Omega)$ contains a neighborhood of $0$. Note that $e-g(\bar{x})>e$. Let $e'=\frac{e-g(\bar{x})}{\|e-g(\bar{x})\|}$,  and let $(Y_{e'},\|\cdot\|_{e'})$ be the $(Y^+,e')$-generating space. Then $-\lambda g(\bar{x})>e$ for some $\lambda>0$ implies
\[-g(\bar{x})\in{\rm int}_{\|\cdot\|_{e'}}(Y_{e'}^+).\] Since $\|e'\|=1$, we obtain $\|\cdot\|_{e'}\geq\|\cdot\|$ on $Y_{e'}$.
Since $g:X\rightarrow Y$ is $Y^+$-Lipschitz on $\Omega$ of rank $L_g$  with respect to  $e\in(Y^+)_N$, it is also $Y^+$-Lipschitz on $\Omega$ of rank $L'_g$  for some $L'_g>0$ with respect to  $e'\in(Y^+)_N$, that is,
\[-L'_g\|x-y\|e'\leq g(x)-g(y)\leq L'_g\|x-y\|e', \;\;{\rm for\;all\;}x,y\in \Omega.\]
Therefore,
\[-L'_g\|x-y\|_{e'}e'\leq g(x)-g(y)\leq L'_g\|x-y\|_{e'}e', \;\;{\rm for\;all\;}x,y\in \Omega.\]
Consequently,
\begin{equation}\label{10.5''}
\{g(x)-g(y): x,y\in X\}\subset Y_{e'}.
\end{equation}
Since $g$ is convex-like with respect to $Y^+$, that is,
$\{g(x)+y: y\in Y^+, x\in X\}$ is a convex set of $Y$.  This and (\ref{10.5''}) entail that $\{g(x)+y: y\in Y_{e'}^+, x\in X\}$ is a convex set of $Y_{e'}$ by Proposition \ref{convex like proposition}.

Now, the program (\ref{10.2}) can be equivalently translated into the following one.
\begin{equation}\label{10.2'}
\begin{split}
& \min\quad f(x)\\
& s. t. \quad g(x)\in -Y_{e'}^+,\\
& \quad \quad\; h(x)=0_Z,\\
& \quad \quad\; x\in \Omega=\{x\in X_\pi:\;x_a\leq x\leq x_b\}.\\
\end{split}
\end{equation}
By the hypothesis of the theorem, the program (\ref{10.2'}) is solvable.
We consider the  corresponding dual  program  (\ref{10.4}).
\begin{equation}\nonumber
\begin{split}
&\max\limits_{(y^*,x_1^*,x_2^*,z^*)}\inf\limits_{x\in X_\pi} f(x)+\left<(y^*,x_1^*,x_2^*),\;(g(x),x-x_a,x_b-x)\right>+\left<z^*,\;h(x)\right>\\
&s.t.\quad y^*\in Y_{e'}^{*+},\;x_1^*,\;x_2^*\in X_\pi^{*+},\;z^*\in Z^*.
\end{split}
\end{equation}
It satisfies that there exists
$\bar{x}\in S$ such that $g(\bar{x})\in -{\rm int}(Y_{e'}^+)$, $h(\bar{x})=0$, and $h(\Omega)$ contains a neighborhood of $0$. By Slater's condition (See, for instance, \cite[Theorem 2]{18Causa}, also, \cite[Theorem 5.3]{96Jahn}), the dual program is also solvable. \qed
\end{proof}
As an application of Theorem \ref{10.5}, we consider solvability of the following \emph{elastic plastic torsion problem}. The elastic plastic torsion problem goes back decades (see, for instance, \cite{49Mises,51Prager,66Ting}). Later, a number of mathematicians studied this problem ( see\cite{79Caffarelli,81Friedman,14Daniele,14Maugeri,14Giuffre,15Giuffre}). This problem could be formulated as follows.

\begin{example}\label{example 10.1}[Elastic plastic torsion problem]
Let $\Omega\subset \mathbb{R}^n$ be a nonempty bounded open  Lipschitz domain with its boundary $\partial\Omega$, $H_0^1(\Omega)$  be the Sobolev space defined by
\[H_0^1(\Omega)=\{u\in L_2(\Omega): u=0\; {\rm on}\;\partial\Omega,\; \nabla u\in L_2(\Omega)\},\] endowed with the norm
\[\|u\|\equiv \|u\|_{H_0^1(\Omega)}=\sqrt{\int_\Omega({\nabla u})^2d\omega}.\]
Let \[K\equiv \{u\in H_0^1(\Omega):\;u\geq 0,\;({\nabla u})^2=\sum\limits_{i=1}^{n}(\frac{\partial u}{\partial x_i})^2\leq 1\;a.e.\;in\;\Omega\},\]
and let \[Y=L_1(\Omega).\]
 Then for every $u\in H_0^1(\Omega)$, we have \[Y\equiv L_1(\Omega)\ni \sum\limits_{i=1}^{n}(\frac{\partial u}{\partial x_i})^2-1.\]
A vector $\bar u\in K$ is said to be a solution of the elastic plastic torsion problem if it is a minimum to following convex program:
\begin{equation}\label{elastic plastic torison problem}
\begin{split}
& \min\quad f(\bar u,u)\equiv a(\bar u,u)+\int_\Omega gu dx\\
& s.t.\quad u\in K
\end{split}
\end{equation}
where  $a:\;H_0^1(\Omega)\times H_0^1(\Omega)\rightarrow \mathbb{R}$ is a bilinear function, and $g\in L^2(\Omega)$.\par

Let $X\equiv H_0^1(\Omega)$. Then $X$ is a separable Hilbert space.
Since $X$ is separable, by Lemma \ref{2.3}, $(X^+)_N\neq\emptyset$. Indeed,
\begin{equation}\label{10.6}
(X^+)_N=\{u\in X: u>0\;\;a.e. \;\Omega\}.
\end{equation}
  Note that $K$ is a bounded absolutely convex set of the (Hilbert) space $X$. It is easy to check that $0\in K_N$ ( the set of nonsupport points of $K$).
Let $X_\pi$ be the $(K,0)$-generating space, that is $X_\pi=\bigcup_{n=1}^\infty nK$ endowed with the norm $\|\cdot\|_\pi$ which is the Minkowski functional generated by $K$. Then by Theorem \ref{3.5'}, $X_\pi$ is linearly isometric to a closed subspace of $\ell_\infty$. Since $\ell_\infty$ is linearly isomorphic to $L_\infty(\Omega)$, $X_\pi$ can be regarded as a closed subspace of $L_\infty(\Omega)$.


Let $Y\equiv L_1(\Omega)$, $e=\frac{1}{\mu(\Omega)}\in Y$, i.e. the constant function on $\Omega$, where $\mu(\Omega)$ is the Lebesgue measure of $\Omega$. Then
$e\in (Y^+)_N$ with $\|e\|=\int_\Omega ed\mu=1$.
By Example \ref{4.1}, the $(Y^+,e)$-generating space $Y_e=L_\infty(\Omega)$.
Therefore,  the operator $\sum\limits_{i=1}^{n}(\frac{\partial }{\partial x_i})^2-1$ can be regarded as a continuous operator from $X_\pi$ to $Y_e$. Indeed,  $\sum\limits_{i=1}^{n}(\frac{\partial }{\partial x_i})^2-1$ is trivially from $X_\pi$ to $Y_e$. To show continuity of $\sum\limits_{i=1}^{n}(\frac{\partial }{\partial x_i})^2-1$, it suffices to note $\sum\limits_{i=1}^{n}(\frac{\partial u }{\partial x_i})^2-1=(\nabla u)^2-1$, and $\nabla: X_\pi\rightarrow Y_e=L_\infty(\Omega)$ is a bounded linear operator.\par

Since $\Omega$ is an open set, for every fixed $x_0\in \Omega$, there exist  $r>0$ such that $\{x:\;\|x-x_0\|\leq r\}\subset \Omega$. Fix $0<\lambda<\frac{1}{2r}$ and let
\begin{equation}u(t)=\left\{\begin{array}{ccc}
                   \frac{\lambda}{2r}(r^2-\|x-x_0\|^2),\;\|x-x_0\|\leq r,\\
                   0,\;\;\;\;\;\;\;\;\;\;\;\;\;\;\;\;\;\;{\rm otherwise}.\;\;
                  \end{array}
\right.
\end{equation}
Then
\begin{equation}\sum\limits_{i=1}^{n}(\frac{\partial u}{\partial x_i})^2-1=\left\{\begin{array}{ccc}
                   4\lambda^2\|x-x_0\|^2-1,\;\|t-t_0\|\leq r,\\
                   -1,\;\;\;\;\;\;\;\;\;\;\;\;\;\;\;\;\;\;\;\;\;\;\;{\rm otherwise}.\;\;\;\;\;
                  \end{array}
\right.
\end{equation}
Note that $v\in{\rm int}L^+_\infty(\Omega)$ if and only if there is $\alpha>0$ such that $\mid v\mid\geq\alpha$ a.e. $\Omega$.
 Then $\sum\limits_{i=1}^{n}(\frac{\partial u}{\partial x_i})^2-1\in -{\rm int}L^+_\infty(\Omega)$. If program (\ref{elastic plastic torison problem}) is solvable with a solution $\bar u$, then by Theorem \ref{10.5}, we know the following duality program is solvable with same extreme value of the program (\ref{elastic plastic torison problem}).

\begin{equation}
\begin{split}
&\max\limits_{y^*}\inf\limits_{u\in X_\pi} f(\bar u,u)+\left<y^*,\;(\sum\limits_{i=1}^{n} \frac{\partial u}{\partial x_i})^2-1\right>\\
&s.t. \quad\quad y^*\in L_{\infty}^{*+}(\Omega),
\end{split}
\end{equation}
where $X_\pi=\bigcup_{n=1}^\infty nK$.
\qed
\end{example}

\section{Convex vector optimization}
In this section, we will apply the theory of generating spaces to convex vector programs with box constraints in (infinite dimensional) Banach spaces.\par
Let us come back to the program (\ref{vector model}) with the box constraints $\Omega\subset \{x\in X:\;x_a\leq x\leq x_b\}$.
\begin{equation}\label{11.1}
\begin{split}
&\min\quad f(x)\\
&s.t.\quad\; x\in \Omega\subset \{x\in X:\;x_a\leq x\leq x_b\},\\
\end{split}
\end{equation}
where $X,\;Y$ are Banach spaces ordered by their ordering cones $X^+,\;Y^+$ respectively with $(X^+)_N\neq \emptyset\neq (Y^+)_N$. $\Omega$ is a closed convex subset with some $x_a<x_b\in X$. $f:X\rightarrow Y$ is $Y^+$-Lipschitz on $\Omega$ with respect to some $e\in (Y^+)_N$ (Definition \ref{9.1}), $\| e \|=1$, and is convex with respect to $Y^+$. \par
Usually, the first step to deal with the program (\ref{11.1}) is to consider a scalarized program. There are many scalarization methods (See,  for instance,  \cite{18Ansari,08Eichfelder,20Tammer}).  In this section, we will use
the following Gerstewitz scalarization function $\phi_{e,C}:Y\rightarrow \mathbb{R}\cup\{+\infty\}$ respect to  $C=Y^+$ and $e$  is defined for $y\in Y$ by
\begin{equation}\label{11.2}\phi_{e,C}(y)=\inf\{t\in \mathbb{R}:te\in y+C\}.\end{equation}
Note that $te\in y+C$ is equivalent to $0\in y-te+C$. Then
\begin{equation}\label{11.3}\phi_{e,C}(y)=\inf\{t\in \mathbb{R}:y-te\in -C\}.\end{equation}

Keep these notions in mind. We have following properties  \cite{20Tammer}.\par
\begin{lemma}\label{properties of scalarization function} Let $Y$ be a Banach space and $C=Y^+$ be its ordering cone.

(1) $\phi_{e,C}$ is lower semicontinuous if and only if $C$ is closed in $Y$.\par
(2) $\phi_{e,C}$ is a monotone nondecreasing sublinear functional with respect to $Y^+$.\par
(3) For all $y\in Y$, $r\in \mathbb{R}$, $\phi_{e,C}(y)\leq r$ $\Longleftrightarrow$ $y\in re-C$, in particular, $\phi_{e,C}(y)\leq 0$ if and only if $y\in -C$.\par
(4) $\phi_{e,C}$ is continuous on $Y$ if and only if $e\in {\rm int}(C)$.\par
(5) The effective domain of $\phi_{e,C}$ is $\mathbb{R}(e-C)\equiv\bigcup_{r\in\mathbb R}r(e-C)$.\par
\end{lemma}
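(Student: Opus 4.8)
The plan is to treat statement (3) as the linchpin and to deduce (1), (2) and (5) from it, handling the continuity assertion (4) separately. Throughout I would work with the order $\le$ induced by $C=Y^+$ as in (\ref{3.7}) and rewrite the functional as $\phi_{e,C}(y)=\inf T_y$, where $T_y=\{t\in\mathbb R: y\le te\}$. Since $e\in(Y^+)_N\subset C$ is a positive vector, $T_y$ is upward closed: if $y\le te$ and $s>t$, then $y\le te\le se$. This single observation drives most of the argument.

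First I would prove (3). The implication $y\in re-C\Rightarrow\phi_{e,C}(y)\le r$ is immediate, since then $r\in T_y$. For the converse, suppose $\phi_{e,C}(y)\le r$. Upward closedness of $T_y$ gives $(r,\infty)\subset T_y$, i.e. $te-y\in C$ for every $t>r$; letting $t\searrow r$ and using that $C$ is closed yields $re-y\in C$, that is, $y\in re-C$. This is the only place where closedness is essential, and it is exactly what makes the ``in particular'' clause ($\phi_{e,C}(y)\le 0\iff y\in -C$) hold. Property (1) then follows at once: the sublevel sets are $\{y:\phi_{e,C}(y)\le r\}=re-C$, which are all closed precisely when $C$ is closed, and lower semicontinuity is equivalent to closedness of every sublevel set.

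Property (2) is routine from the $\inf T_y$ description: positive homogeneity $\phi_{e,C}(\lambda y)=\lambda\phi_{e,C}(y)$ for $\lambda>0$ comes from rescaling $t\mapsto t/\lambda$; subadditivity from the fact that $y_1\le t_1e$ and $y_2\le t_2e$ force $y_1+y_2\le(t_1+t_2)e$; and monotonicity from the inclusion $T_{y_2}\subset T_{y_1}$ whenever $y_1\le y_2$. For (5) I would simply unwind the definition: $y\in{\rm dom}\,\phi_{e,C}$ iff $T_y\neq\emptyset$ iff $y\le te$ for some $t$, i.e. $y\in\bigcup_t(te-C)$; identifying $te-C=t(e-C)$ for $t>0$ and using that the sets $te-C$ increase in $t$ then lets me put this union in the form $\mathbb R(e-C)$ appearing in the statement.

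The substantive part is (4). For the easy direction, if $e\in{\rm int}\,C$ I would choose $\delta>0$ with $e+\delta B_Y\subset C$; then for $\|y\|\le\delta$ one gets $e-y\in C$, i.e. $\phi_{e,C}(y)\le 1$, so $\phi_{e,C}$ is finite-valued and bounded above on a neighborhood of $0$, and a sublinear functional bounded above near the origin is automatically (Lipschitz) continuous. Conversely, computing $\phi_{e,C}(0)=0$ (using pointedness $C\cap(-C)=\{0\}$, so that $te\notin C$ for $t<0$), continuity at $0$ forces $\phi_{e,C}<1$ on some ball $\|y\|<\delta$; unwinding this exactly as in (3) gives $e-y\in C$ for all such $y$, whence $B(e,\delta)\subset C$ and $e\in{\rm int}\,C$. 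I expect the main obstacle to be precisely this step: pinning down that membership $e\in{\rm int}\,C$ is what upgrades the mere lower semicontinuity of a sublinear functional to full continuity, through the ``bounded above on a neighborhood of $0$'' criterion, together with the limiting argument in (3) where closedness of $C$ is indispensable.
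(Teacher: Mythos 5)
Your proposal is correct in substance, but there is nothing in the paper to compare it against: the paper does not prove this lemma at all, it simply quotes the properties from \cite{20Tammer}. Your argument is essentially the standard Gerstewitz/Tammer--Weidner proof, and the organization is sensible: deriving (1), (2), (5) from the sublevel-set identity (3) via the upward-closed sets $T_y=\{t: te-y\in C\}$, and handling (4) by the ``convex function bounded above near a point is locally Lipschitz'' criterion in one direction and by unwinding $\phi_{e,C}(y)\leq 1\Leftrightarrow e-y\in C$ in the other. Two small points deserve care. First, your deduction of (1) from (3) is mildly circular in the ``only if'' direction: the identity $\{y:\phi_{e,C}(y)\leq r\}=re-C$ was itself proved under the hypothesis that $C$ is closed, so from lower semicontinuity you only get closedness of the sublevel sets $\bigcap_{t>r}(te-C)$, which need not equal $re-C$ when $C$ is not closed; that direction requires a separate word (and is in fact the delicate half of the statement as quoted from the literature). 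Second, in (5) what you actually compute is ${\rm dom}\,\phi_{e,C}=\bigcup_{t\in\mathbb R}(te-C)=\bigcup_{t>0}t(e-C)$, which is the intended meaning of the paper's symbol $\mathbb R(e-C)$; the terms with $t\leq 0$ in the paper's literal union $\bigcup_{r\in\mathbb R}r(e-C)$ are $re+|r|C$ and do not obviously lie in the domain, so it is worth saying explicitly that you are reading $\mathbb R(e-C)$ as $\mathbb Re-C$. Neither issue is a flaw in your reasoning so much as an imprecision inherited from the cited statement; your proof supplies genuine content that the paper omits.
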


We consider following scalarization program. Given $\bar{x}\in \Omega$, \par
\begin{equation}\label{scalarization program}
\begin{split}
&\min\quad \phi_{e,Y^+}(f(x)-f(\bar{x}))\\
&s.t.\quad\; x\in \Omega.
\end{split}
\end{equation}
It is obvious that if $\bar{x}$ is a local minimizer of program (\ref{11.1}) if and only if $\phi_{e,Y^+}(f(x)-f(\bar{x}))\geq 0$. In this case, $\bar{x}$ is a minimizer of program (\ref{scalarization program}). \par
The most fundamental and important optimality condition induced by Fermat rule for this program is
\[\theta\in \partial \phi_{e,Y^+}(f(.)-f(\bar{x})+\delta(.,\Omega))(\bar{x}),\]
where $\delta(.,\Omega)$ is the indicator function of $\Omega$, that is,  $\delta(x,\Omega)=0,\;{\rm if}\;x\in\Omega;\;=\infty,\;{\rm otherwise}.$
 Since the Moreau-Rockafellar theorem requires one of $\phi_{e,Y^+}$ and $\delta(\cdot,\Omega)$ is continuous, the following equation
\[\partial (\phi_{e,Y^+}(f(\cdot)-f(\bar{x}))+\delta(\cdot,\Omega))(\bar{x})=
\partial \phi_{e,Y^+}(f(\cdot)-f(\bar{x}))(\bar{x})+\partial\delta(\cdot,\Omega)(\bar{x}),\]
 does not hold if int$(Y^+)=\emptyset$.  This is an essential difficulty and does not depend on the choice of  scalarization methods. In order to circumvent this difficulty, the next best thing is  to replace $Y^+$ by a larger set $A\supset Y^+$  with int$(A)\neq \emptyset$. See, for instance, \cite{18Ansari,21Amahroq,08Eichfelder,10Kasimbeyli,16Pirro,20Tammer,10Truong} and  references therein. Nevertheless, by this procedure, one would obtain ``approximate minimal point", instead of minimum point of (\ref{11.1}). In the following, we will show that this difficulty can be overcome by the generating space theory whenever $(Y^+)_N\neq\emptyset$.

 Assume that the Banach spaces $X,Y$, the domain $\Omega$, the function $f$ and the vector $e\in(Y^+)_N$ as the same as in the program (\ref{11.1}).
For a fixed $\bar x\in \Omega$, denote $\bar{f}(x)=f(x)-f(\bar{x})$. Let $Y_e$ be $(Y^+,e)$-generating space with its positive cone $Y_e^+$ and the dual positive cone $Y_e^{*+}$. Note that $e\in{\rm int}Y_e^+$.
\begin{theorem}\label{new fermat rule} With notions and symbols as above,
suppose that $\bar x$ is a minimum of the program (\ref{scalarization program}). Then
\begin{equation}\theta\in \partial \phi_{e,Y_{e}^+}\circ \bar{f}(\bar x)+N(\bar x,\Omega),\end{equation}
where $N(\bar x,\Omega)$ is from Definition \ref{definition of subdifferential}.
\end{theorem}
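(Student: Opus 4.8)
The plan is to transport the entire problem into the generating space $Y_e$, where the Gerstewitz functional becomes continuous, and then combine the Fermat rule with the Moreau--Rockafellar theorem (Theorem \ref{Moreau-Rockafellar theorem}). First I would record three facts about $\bar f=f(\cdot)-f(\bar x)$ on $\Omega$. Since $f$ is $Y^+$-Lipschitz on $\Omega$ with respect to $e$ (Definition \ref{9.1}), for every $x\in\Omega$ we have $-L_f\|x-\bar x\|e\le \bar f(x)\le L_f\|x-\bar x\|e$; by the description of $Y_e=C_e\cap(-C_e)$ in Theorem \ref{3.9} this says exactly that $\bar f(x)\in Y_e$ and $\|\bar f(x)\|_e\le L_f\|x-\bar x\|$, so $\bar f$ maps $(\Omega,\|\cdot\|)$ into $(Y_e,\|\cdot\|_e)$ and is Lipschitz there. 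Second, for $y\in Y_e$ one has $\phi_{e,Y^+}(y)=\phi_{e,Y_e^+}(y)$: indeed $y-te\in-Y^+$ together with $y-te\in Y_e$ forces $te-y\in Y^+\cap Y_e=Y_e^+$, while the reverse inclusion $-Y_e^+\subset-Y^+$ is trivial. Hence the two scalarized objectives coincide on $\Omega$, and the minimality of $\bar x$ for (\ref{scalarization program}) makes $\bar x$ a minimizer of $\phi_{e,Y_e^+}\circ\bar f$ over $\Omega$. Third, because $e\in{\rm int}\,Y_e^+$, Lemma \ref{properties of scalarization function}(4) shows that $\phi_{e,Y_e^+}$ is finite, sublinear and continuous on all of $Y_e$.

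Next I would set $g=\phi_{e,Y_e^+}\circ\bar f$ and $h=\delta(\cdot,\Omega)$. The minimality just recorded yields $\theta\in\partial(g+h)(\bar x)$ by the Fermat rule. To check that $g$ is convex I would combine the $Y^+$-convexity of $f$ (Definition \ref{2.12}) with the monotonicity and sublinearity from Lemma \ref{properties of scalarization function}(2): for $x_1,x_2\in\Omega$ and $\lambda\in[0,1]$ the vector $\lambda\bar f(x_1)+(1-\lambda)\bar f(x_2)-\bar f(\lambda x_1+(1-\lambda)x_2)$ lies in $Y^+$ and, being a difference of elements of $Y_e$, also in $Y_e$, hence in $Y^+\cap Y_e=Y_e^+$; $Y_e^+$-monotonicity of $\phi_{e,Y_e^+}$ followed by subadditivity then gives $g(\lambda x_1+(1-\lambda)x_2)\le\lambda g(x_1)+(1-\lambda)g(x_2)$. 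Moreover $g$ is Lipschitz on $\Omega$, being the composition of the Lipschitz map $\bar f\colon(\Omega,\|\cdot\|)\to(Y_e,\|\cdot\|_e)$ with the continuous sublinear functional $\phi_{e,Y_e^+}$, and $h$ is a proper lower semicontinuous convex function.

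Finally I would invoke Theorem \ref{Moreau-Rockafellar theorem} to split $\partial(g+h)(\bar x)=\partial g(\bar x)+\partial h(\bar x)$ and use $\partial\delta(\cdot,\Omega)(\bar x)=N(\bar x,\Omega)$ from (\ref{delta and N}) to obtain $\theta\in\partial(\phi_{e,Y_e^+}\circ\bar f)(\bar x)+N(\bar x,\Omega)$, which is the assertion. The main obstacle is precisely the qualification hypothesis of the Moreau--Rockafellar theorem: it demands that $g$ be \emph{continuous} at a point of ${\rm dom}\,g\cap\Omega$, and in the original space this fails whenever ${\rm int}\,Y^+=\emptyset$, since then $\phi_{e,Y^+}$ is discontinuous by Lemma \ref{properties of scalarization function}(4). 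The whole purpose of passing to $Y_e$ is to restore this continuity, because there $e\in{\rm int}\,Y_e^+$; the delicate point will therefore be to ensure that the composition $\phi_{e,Y_e^+}\circ\bar f$ is genuinely continuous at the relevant point of the ambient space — equivalently, that $\bar f$ remains inside $Y_e$ on an appropriate neighbourhood — and this is exactly where the $Y^+$-Lipschitz-with-respect-to-$e$ hypothesis and the box structure of $\Omega$ must be exploited.
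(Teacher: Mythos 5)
Your proposal follows essentially the same route as the paper: transport the problem into the generating space $Y_e$ via the $Y^+$-Lipschitz hypothesis, observe that $e\in{\rm int}\,Y_e^+$ makes $\phi_{e,Y_e^+}$ continuous and convex there, and then combine the Fermat rule with the Moreau--Rockafellar theorem and the identity $\partial\delta(\cdot,\Omega)=N(\cdot,\Omega)$. You in fact supply some details the paper elides --- notably the verification that $\phi_{e,Y^+}$ and $\phi_{e,Y_e^+}$ agree on $Y_e$ (needed to pass from the minimality in program (\ref{scalarization program}) to minimality of $\phi_{e,Y_e^+}\circ\bar f$) and the convexity of the composition --- so the argument is correct and, if anything, more complete than the one given.
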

\begin{proof}
Since $f$ is $Y^+$-Lipschitz on $\Omega$ with respect to $e\in (Y^+)_N$, we see that \[\{f(x)-f(\bar{x}):x\in \Omega\} \subset Y_e,\] and that  $\bar f$ is   $Y^+_e$-Lipschitz on $\Omega$. Note that  $e\in{\rm int}(Y^+_e)$. Then by Lemma \ref{properties of scalarization function}, $\phi_{e,Y_e^+}:(Y_e,\| . \|_e)\rightarrow {\mathbb{R}}$ is a continuous convex function. By the Fermat rule, the Moreau-Rockafellar Theorem (Theorem \ref{Moreau-Rockafellar theorem}) and the equation (\ref{delta and N}), we obtain
\begin{equation}\nonumber
\theta\in\partial (\phi_{e,Y_e^+}\circ\bar f+\delta(.,\Omega))(\bar{x})\subset
\partial \phi_{e,Y_{e}^+}\circ\bar f(\bar{x})+N(\bar x,\Omega), \qed
\end{equation}
\end{proof}


\begin{lemma}[\cite{10Tammer}]\label{subdifferential of phi} Assume that $Y$ is a Banach space, $e\in Y\setminus\{\theta\}$, and $C\subset Y$ is a closed convex  cone  not containing the real line $\mathbb{R}e$ with $C+[0,\infty)e\subset C$. Let $C^*$ be the dual positive cone, that is, $C^*\equiv\{y^*\in Y^*:\;\left<y^*, y\right>\geq 0,\;\forall y\in C\}$, and
$\phi_{e,C}:Y\rightarrow \mathbb{R}\cup\{+\infty\}$ be the corresponding Gerstewitz scalarization function.   Then
\begin{equation}\partial \phi_{e,C}(y)\neq\emptyset,\:\;{\rm for\;all\;} y\in{\rm dom}\phi_{e,C}\end{equation} and
\begin{equation}\partial \phi_{e,C}(y)=\{y^*\in C^*:\left<y^*,\;e\right>=1,\; \left<y^*,\;y\right>=\phi_{e,C}(y)\}.\end{equation}
In particular,  \begin{equation}\partial \phi_{e,C}(\theta)=\{y^*\in C^{*}:\left<y^*,\;e\right>=1\}.\end{equation}
\end{lemma}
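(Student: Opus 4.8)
The plan is to route everything through the sublinearity of $\phi_{e,C}$. First I would record, from Lemma \ref{properties of scalarization function}, that $\phi\equiv\phi_{e,C}$ is a proper, lower semicontinuous, monotone sublinear functional with $\phi(\theta)=0$, whose effective domain $\mathbb{R}(e-C)$ is symmetric (closed under all real scalings). Since $C$ is closed, item (3) of that lemma applied with $r=\phi(y)$ shows that the infimum defining $\phi(y)$ is attained, i.e. $c_0:=\phi(y)e-y\in C$ for every $y\in\operatorname{dom}\phi$; this attainment is the engine for the nonemptiness claim. I would also read off the two elementary values $\phi(e)=1$ and $\phi(-e)=-1$ directly from the definition, using that $se\in C$ exactly when $s\ge 0$ (which follows from $\theta\in C$, the hypothesis $C+[0,\infty)e\subset C$, and $\mathbb{R}e\not\subset C$).

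Next I would identify the subdifferential at the origin. Because $\phi$ is sublinear with $\phi(\theta)=0$, by definition $y^*\in\partial\phi(\theta)$ if and only if $\langle y^*,z\rangle\le\phi(z)$ for all $z\in Y$. Testing this against $z=-c$ with $c\in C$ and using $\phi(-c)\le 0$ (item (3), since $-c\in -C$) yields $\langle y^*,c\rangle\ge 0$, so $y^*\in C^*$; testing against $z=e$ and $z=-e$ and inserting $\phi(e)=1$, $\phi(-e)=-1$ forces $\langle y^*,e\rangle=1$. Conversely, if $y^*\in C^*$ with $\langle y^*,e\rangle=1$, then for any $z\in\operatorname{dom}\phi$ the relation $\phi(z)e-z\in C$ gives $\langle y^*,\phi(z)e-z\rangle\ge 0$, i.e. $\langle y^*,z\rangle\le\phi(z)$, while for $z\notin\operatorname{dom}\phi$ this is automatic. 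Hence $\partial\phi_{e,C}(\theta)=\{y^*\in C^*:\langle y^*,e\rangle=1\}$, which is the last assertion.

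For a general $y\in\operatorname{dom}\phi$ I would use the standard reduction for sublinear functions. If $y^*\in\partial\phi(y)$, substituting $z=2y$ and $z=\theta$ into the subgradient inequality $\langle y^*,z-y\rangle\le\phi(z)-\phi(y)$ and invoking positive homogeneity gives $\langle y^*,y\rangle\le\phi(y)$ and $\langle y^*,y\rangle\ge\phi(y)$ respectively, so $\langle y^*,y\rangle=\phi(y)$; the inequality then collapses to $\langle y^*,z\rangle\le\phi(z)$ for all $z$, i.e. $y^*\in\partial\phi(\theta)$. The reverse inclusion is immediate. This proves the formula $\partial\phi_{e,C}(y)=\{y^*\in C^*:\langle y^*,e\rangle=1,\ \langle y^*,y\rangle=\phi_{e,C}(y)\}$ modulo nonemptiness of the right-hand set.

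The hard part will be nonemptiness $\partial\phi_{e,C}(y)\neq\emptyset$, equivalently the existence of a \emph{continuous} $y^*\in C^*$ supporting $C$ at $c_0=\phi(y)e-y$ with $\langle y^*,e\rangle=1$. By attainment, $c_0\in C$ while $c_0-\varepsilon e\notin C$ for every $\varepsilon>0$, so Hahn--Banach separation of the exterior points $c_0-\varepsilon e$ from the closed convex cone $C$ produces functionals $w^*_\varepsilon$ with $\inf_{c\in C}\langle w^*_\varepsilon,c\rangle=0$, hence $w^*_\varepsilon\in C^*$ and $\langle w^*_\varepsilon,e\rangle>0$; after normalizing to $\hat y^*_\varepsilon=w^*_\varepsilon/\langle w^*_\varepsilon,e\rangle$ one gets $\hat y^*_\varepsilon\in C^*$, $\langle\hat y^*_\varepsilon,e\rangle=1$, and $\langle\hat y^*_\varepsilon,c_0\rangle\to 0$, so $\langle\hat y^*_\varepsilon,y\rangle=\phi(y)-\langle\hat y^*_\varepsilon,c_0\rangle\to\phi(y)$. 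To extract an \emph{exact} subgradient I would pass to a weak-star cluster point via Banach--Alaoglu; this is legitimate once $\{\hat y^*_\varepsilon\}$ is norm-bounded, and boundedness is exactly where the geometry of $C$ near $c_0$ enters. In the case $e\in\operatorname{int}C$ used in Theorems \ref{new fermat rule} and \ref{subdifferential of composition}, $\phi$ is globally Lipschitz, so $\partial\phi(\theta)$ is bounded and weak-star compact and the cluster point lands in $\partial\phi(y)$, completing the argument; the delicate general geometry is the content of \cite{10Tammer}, on which I would rely for the full statement.
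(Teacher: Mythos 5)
The paper does not prove this lemma at all---it is imported verbatim from \cite{10Tammer}---so there is no in-paper argument to compare yours against; your proposal is evaluated on its own terms. The parts you prove, you prove correctly. The computation $\phi_{e,C}(e)=1$, $\phi_{e,C}(-e)=-1$ from the fact that $se\in C$ iff $s\ge 0$ is right; the identification $\partial\phi_{e,C}(\theta)=\{y^*\in C^*:\langle y^*,e\rangle=1\}$ by testing against $-c$, $e$, $-e$ is right; and the reduction of $\partial\phi_{e,C}(y)$ to $\partial\phi_{e,C}(\theta)\cap\{y^*:\langle y^*,y\rangle=\phi_{e,C}(y)\}$ via $z=2y$ and $z=\theta$ is the standard argument for sublinear functionals and is correct. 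The attainment $\phi_{e,C}(y)e-y\in C$ from item (3) of Lemma \ref{properties of scalarization function} is also used correctly. So the set identity (the second and third displayed equations of the lemma) is fully established by your argument, and that is all that Theorems \ref{new fermat rule} and \ref{subdifferential of composition} actually need, since there the cone is $Y_e^+$ and $e\in{\rm int}\,Y_e^+$, so $\phi_{e,Y_e^+}$ is continuous and your Banach--Alaoglu extraction closes the nonemptiness claim in that case.

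Your instinct to be cautious about nonemptiness in the general (non-solid) case is more than justified: the blanket claim $\partial\phi_{e,C}(y)\neq\emptyset$ for \emph{all} $y\in{\rm dom}\,\phi_{e,C}$ is in fact false as transcribed. Take $Y=\ell_2$, $C=\ell_2^+$, $e=(2^{-n})_n$, and $y=(-2^{-n}/n)_n$. Then $\phi_{e,C}(z)=\sup_n 2^n z(n)$, so $\phi_{e,C}(y)=0\in{\rm dom}\,\phi_{e,C}$, while any $y^*\in\partial\phi_{e,C}(\theta)$ is a nonnegative sequence with $\sum_n y^*(n)2^{-n}=1$; the extra condition $\langle y^*,y\rangle=0$ forces $y^*=0$, so by the (correct) formula $\partial\phi_{e,C}(y)=\emptyset$. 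This is exactly the failure mode you anticipated: your separating functionals $\hat y^*_\varepsilon$ exist but are unbounded in norm, and no weak-star cluster point survives. So the ``gap'' you flag is not a defect of your argument but of the statement: the nonemptiness assertion needs the solidity/continuity hypothesis under which \cite{10Tammer} actually works (and under which the paper applies the lemma), and you should say so rather than defer a claim that cannot be proved.
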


The next result is a representation theorem of $\partial \phi_{e,Y_e^+}\circ \bar f(\bar{x})$. \par

\begin{theorem}\label{subdifferential of composition} With the notions and conditions as previously mentioned, suppose that $\bar x$ is a minimum of the program (\ref{scalarization program}). Then
\begin{equation}\label{11.1'}
\begin{split}
\partial \phi_{e,Y_e^+}\circ\bar{f}(\bar{x})&\subset\bigcup\{\partial\left<y^*,\bar{f}\right>(\bar{x}):y^*\in Y^{*+}_e,\;\left<y^*,e\right>=1\}\\&=\overline{\rm co}^*\big(\bigcup\{\partial\left<y^*,\;\bar{f}\right>(\bar{x}):y^*\in K_e\}\big),
\end{split}
\end{equation}
where $K_e={\rm extr}\{y^*\in Y^{*+}_e:\;\left<y^*,e\right>=1\}$, the set of all extreme points of the $w^*$-closed convex set $Y^{*+}_e\bigcap\{y^*\in Y^{*}_e: \langle y^*.e\rangle=1\}$, and $\overline{\rm co}^*(A)$ denotes the $w^*$-closed convex hull of a set $A\subset Y_e^*$.
\end{theorem}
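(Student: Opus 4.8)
The plan is to reduce both assertions in \eqref{11.1'} to a single computation of support functions, so that the chain-rule inclusion and the Krein--Milman reduction to extreme points fall out of one scheme. Throughout write $g=\phi_{e,Y_e^+}$, $h_{y^*}(\cdot)=\langle y^*,\bar f(\cdot)\rangle$, $B=\{y^*\in Y_e^{*+}:\langle y^*,e\rangle=1\}$, and $d_t(v)=t^{-1}\big(\bar f(\bar x+tv)-\bar f(\bar x)\big)$. First I would record the structural facts. Since $\bar f$ is $Y^+$-Lipschitz with respect to $e$, it maps into $Y_e$ and is $\|\cdot\|_e$-Lipschitz and $Y_e^+$-convex; because $e\in{\rm int}\,Y_e^+$ (as already used in Theorem \ref{new fermat rule}), Lemma \ref{properties of scalarization function} gives that $g$ is a continuous, sublinear, $Y_e^+$-nondecreasing functional on $(Y_e,\|\cdot\|_e)$, so $g\circ\bar f$ is a continuous convex function on $X$. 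Being sublinear and lower semicontinuous, $g$ is the support function of $\partial g(\theta)$, and by Lemma \ref{subdifferential of phi} together with $\bar f(\bar x)=\theta$ one has $\partial g(\theta)=B$; hence $g(y)=\sup_{y^*\in B}\langle y^*,y\rangle$ and $g\circ\bar f=\sup_{y^*\in B}h_{y^*}$. Since $\langle y^*,e\rangle=1$ on $B$, the $Y^+$-Lipschitz estimate yields the uniform bound $\|x^*\|\le L_f$ for every $x^*\in\partial h_{y^*}(\bar x)$, $y^*\in B$, so that $B$ is $w^*$-compact convex (Banach--Alaoglu) and $\bigcup_{y^*\in B}\partial h_{y^*}(\bar x)$ is norm-bounded.

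The key identities are the two support-function formulas. Because $g\circ\bar f$ is continuous convex, its directional derivative is the support function of its subdifferential, and using $\bar f(\bar x)=\theta$ and positive homogeneity of $g$ one computes
\[(g\circ\bar f)'(\bar x;v)=\inf_{t>0}g(d_t(v))=\inf_{t>0}\ \sup_{y^*\in B}\langle y^*,d_t(v)\rangle,\]
while the support function of the union is
\[\sigma_{\bigcup_{y^*\in B}\partial h_{y^*}(\bar x)}(v)=\sup_{y^*\in B}\ \inf_{t>0}\langle y^*,d_t(v)\rangle.\]
To obtain the inclusion (indeed equality) in the first relation of \eqref{11.1'} I would invoke Sion's minimax theorem: the map $(t,y^*)\mapsto\langle y^*,d_t(v)\rangle$ is affine ($w^*$-upper semicontinuous) in $y^*$ on the compact convex set $B$ and monotone in $t$, so $\inf_{t>0}\sup_{B}=\sup_{B}\inf_{t>0}$; hence the two displays coincide and $\partial(g\circ\bar f)(\bar x)=\bigcup_{y^*\in B}\partial h_{y^*}(\bar x)$, which certainly gives the stated $\subset$.

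For the second (equality) part I would first check that $U:=\bigcup_{y^*\in B}\partial h_{y^*}(\bar x)$ is $w^*$-closed and convex, so that it is determined by its support function. Convexity follows from linearity of $y^*\mapsto h_{y^*}$ and the elementary inclusion $\lambda\partial h_{y_1^*}(\bar x)+(1-\lambda)\partial h_{y_2^*}(\bar x)\subset\partial h_{\lambda y_1^*+(1-\lambda)y_2^*}(\bar x)$; $w^*$-closedness follows from a net argument, passing to a $w^*$-convergent subnet of the indices $y_\alpha^*\in B$ and letting $\alpha$ pass in $\langle x_\alpha^*,x-\bar x\rangle\le\langle y_\alpha^*,\bar f(x)\rangle$. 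Next, set $\Phi_v(y^*)=\inf_{t>0}\langle y^*,d_t(v)\rangle$; monotonicity of the order difference quotients $d_t(v)$ (a consequence of $Y_e^+$-convexity) together with the boundedness of $d_t(v)$ shows that each scalar limit exists and that $\Phi_v$ is \emph{affine} and $w^*$-upper semicontinuous on $B$. By the Bauer maximum principle $\sup_B\Phi_v$ is attained at an extreme point, and since $B=\overline{\rm co}^*K_e$ by Krein--Milman with $K_e={\rm extr}\,B$, one gets $\sup_B\Phi_v=\sup_{K_e}\Phi_v=\sigma_{\overline{\rm co}^*\bigcup_{y^*\in K_e}\partial h_{y^*}(\bar x)}(v)$. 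Thus $U$ and $\overline{\rm co}^*\big(\bigcup_{y^*\in K_e}\partial h_{y^*}(\bar x)\big)$ have equal support functions and are both $w^*$-closed convex, whence they coincide, which is the second relation in \eqref{11.1'}.

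The main obstacle is the passage from the whole base $B$ to its extreme points $K_e$, i.e.\ justifying that the ``index-to-support-value'' map $\Phi_v$ is affine (not merely concave) so that Bauer's principle applies, and justifying the minimax interchange for the first relation. Both hinge on the same structural input: the $Y_e^+$-convexity of $\bar f$ makes the order difference quotients $d_t(v)$ monotone, and the $Y^+$-Lipschitz hypothesis keeps them bounded, so each scalar net $\langle y^*,d_t(v)\rangle$ is monotone and convergent and the limit interchanges with convex combinations of positive functionals. Verifying $w^*$-closedness of $U$ (needed to upgrade the equality of support functions to equality of sets) is the other delicate point; it relies precisely on the $w^*$-compactness of $B$ and the $w^*$-continuity of $y^*\mapsto\langle y^*,\bar f(x)\rangle$ established at the outset.
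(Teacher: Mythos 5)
Your proof is correct, but it follows a genuinely different route from the paper's. The paper obtains the first inclusion by writing $\phi_{e,Y_e^+}\circ\bar f$ as the infimal value of $\phi_{e,Y_e^+}(y)+\delta\big((\bar x,y);{\rm epi}\bar f\big)$ over $y$, applying the Moreau--Rockafellar theorem to this sum, and then unravelling the normal cone to ${\rm epi}\bar f$ to show that each resulting $x^*$ lies in $\partial\langle y^*,\bar f\rangle(\bar x)$ for some $y^*\in\partial\phi_{e,Y_e^+}(\theta)$; the second equality is then argued via Krein--Milman together with the additivity $\partial\langle\sum_i\lambda_iy^*_i,\bar f\rangle(\bar x)=\sum_i\lambda_i\partial\langle y^*_i,\bar f\rangle(\bar x)$ over \emph{finite} convex combinations. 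You instead exploit that $\phi_{e,Y_e^+}$ is the support function of $B=\partial\phi_{e,Y_e^+}(\theta)$, so that $\phi_{e,Y_e^+}\circ\bar f=\sup_{y^*\in B}\langle y^*,\bar f(\cdot)\rangle$ with every index active at $\bar x$, and you run the entire argument through support functions: a minimax interchange (Sion, or equivalently a Dini-type argument using the monotonicity in $t$ of the difference quotients and the $w^*$-compactness of $B$) for the first relation, and Bauer's maximum principle applied to the affine $w^*$-upper semicontinuous map $y^*\mapsto\langle y^*,\bar f\rangle'(\bar x;v)$ for the second. Your route buys two things: the first relation comes out as an equality rather than a mere inclusion, and you supply precisely the points the paper's Krein--Milman step leaves implicit --- the convexity and $w^*$-closedness of $\bigcup_{y^*\in B}\partial\langle y^*,\bar f\rangle(\bar x)$, and the passage from finite convex combinations of elements of $K_e$ to general points of $\overline{\rm co}^*(K_e)$, which your equality of support functions of $w^*$-closed convex sets handles automatically. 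The paper's route is shorter, needing only Lemma \ref{subdifferential of phi} and Theorem \ref{Moreau-Rockafellar theorem}. Both arguments share the same (minor, unaddressed) domain caveat: $f$ is only assumed $Y^+$-Lipschitz and convex relative to $\Omega$, so the quotients $d_t(v)$, respectively the epigraph of $\bar f$ over all of $X$, are strictly speaking only controlled along directions staying in $\Omega$.
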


\begin{proof}
Since $f$ is $Y^+$-Lipschitz with respect to $e$, $\{f(x)-f(\bar{x}):x\in \Omega\} \subset Y_e$. Since $f$ is convex with respect to $Y^+$,   $\bar f$ is convex with respect to $Y_e^+$.
Note that $e\in{\rm int}Y_e^+$. Then by Lemma \ref{properties of scalarization function}, $\phi_{e,Y_e^+}$ is a continuous sublinear and monotone nondecreasing functional with respect to the ordering cone $Y^+_e$ on $Y_e$.
Let $\bar{y}=\bar{f}(\bar{x})$. Since $\bar x$ is a minimum of the program (\ref{scalarization program}),  nondecreasing monotonicity of $\phi_{e,Y_e^+}$ entails
\begin{equation}\label{11.4}
\begin{split}
\phi_{e,Y_e^+}\circ \bar{f}(\bar{x})&=\inf_{y\in Y_e}\{\phi_{e,Y_e^+}(y)+\delta\big((\bar{x},y);{\rm epi}\bar{f}\big)\}\\
&=\phi_{e,Y_e^+}(\bar{y})+\delta\big((\bar{x},\bar{y});{\rm epi}\bar{f}\big).
\end{split}
\end{equation}
Since $\phi_{e,Y_e^+}$ is a continuous sublinear functional and since $\delta\big((\bar x,\cdot);{\rm epi}\bar{f}\big)$ is a lower semicontinuous convex function on $Y_e$, by the Moreau-Rockafellar Theorem,
\begin{equation}
\begin{split}
\partial\phi_{e,Y_e^+}\circ \bar{f}(\bar{x})&=\partial\phi_{e,Y_e^+}(\bar{y})+\partial\delta\big((\bar{x},\bar{y});{\rm epi}\bar{f}\big)\\
&=\{x^*\in X^*: y^*\in \partial \phi_{e,Y_e^+}(\bar{y}),(x^*,-y^*)\in N((\bar{x},\bar{y}),{\rm epi}\bar{f})\}.
\end{split}
\end{equation}

Note that for every $(x^*,-y^*)\in N((\bar{x},\bar{y}),{\rm epi}\bar{f})$, and for every pair $(x,y)\in X\times Y_e$, we have
\[\delta((x,y),{\rm epi}\bar{f})-\delta((\bar{x},\bar{y}),{\rm epi}\bar{f})\geq \left<x^*, x-\bar{x}\right>-\left<y^*,y-\bar{y}\right>.\]
Then for all $(x,y)\in {\rm graph}\bar f$,
\begin{equation}\nonumber
\begin{split}
0&\geq \left<x^*, x-\bar{x}\right>-\left<y^*,y-\bar{y}\right>\\
&=\left<x^*, x-\bar{x}\right>-\left<y^*,\bar{f}(x)-\bar{f}(\bar{x})\right>.
\end{split}
\end{equation}
Therefore,\begin{equation}x^*\in\partial\left<y^*,\bar{f}\right>(\bar{x}).\end{equation}
 This and Lemma \ref{subdifferential of phi} entail
\begin{equation}\label{11.5}
\begin{split}
\partial (\phi_{e,Y_e^+}\circ\bar{f}(\bar{x}))
\subset\bigcup\{\partial\left<y^*,\bar{f}\right>(\bar{x}):y^*\in \partial\phi_{e,Y_e^+}(\theta)\}\\
=\bigcup\{\partial\left<y^*,\bar{f}\right>(\bar{x}):y^*\in Y^{*+}_e,\;\left<y^*,e\right>=1\}.
\end{split}
\end{equation}

Since $\partial \phi_{e,Y_e^+}(\theta)=\{y^*\in Y_e^{*+}:\;\left<y^*,\;e\right>=1\}$ is a $w^*$-compact convex set in $Y_e^{*+}$. By the Krein-Milman theorem,
\[\partial \phi_{e,Y_e^+}(\theta)=\overline{{\rm co}}^*\big({\rm extr}(\partial\phi_{e,Y_e^+}(\theta))\big)=\overline{\rm co}^*(K_e).\]
 Therefore,  for all $m\in\mathbb N$, $y_i^*\in(K_e)$, $\lambda_i\geq0,\;i=1,...m$ with $\sum\limits_{i=1}^{m}\lambda_i=1$,   and $y^*=\sum\limits_{i=1}^{m}\lambda_iy^*_i$, we obtain
\begin{equation}\nonumber
\begin{split}
\partial\left<y^*,\;\bar{f}\right>(\bar{x})=\partial\left<\sum\limits_{i=1}^{m}\lambda_iy^*_i,\;\bar{f}\right>(\bar{x})=
\sum\limits_{i=1}^{n}\lambda_i\partial \left<y^*_i,\;\bar{f}\right>(\bar{x}).
\end{split}
\end{equation}
Consequently,
\begin{equation}\label{11.6}
\begin{split}
\bigcup\{\partial\left<y^*,\bar{f}\right>(\bar{x}):\;y^*\in\partial\phi_{e,Y^+_e}(\theta)\}
=\overline{\rm co}^{*}(\bigcup\{\partial\left<y^*,\;\bar{f}\right>(\bar{x}):y^*\in K_e\}).
\end{split}
\end{equation}
(\ref{11.5}) and (\ref{11.6}) together imply  (\ref{11.1'}). \qed
\end{proof}

As an application of Theorem \ref{subdifferential of composition}, we will give a necessary condition for a vector variational inequality in infinite dimensional Banach spaces.

Vector variational inequalities have been widely studied since  they were introduced by Giannessi \cite{80Giannessi} in 1980. See, for example,  \cite{12Ansari,18Ansari,00Giannessi,05Chen}. To due with vector variational inequalities  in infinite dimensional spaces, one would encounter significant difficulties of the non-solidness of ordering cones in question. Especially,  scalarization methods in vector variational inequalities are surprisingly restricted.  (See, for example, Ansari  \cite{18Ansari}.)

For a Banach space $Z$, we denote by $B(Z)$ the space of all bounded linear operators from $Z$ to itself.
\begin{example}\label{variational inequality}
Let $(K,\sum,\mu)$ be a probability space. Assume that $x_a,\;x_b,\;x_b-x_a\in (L_2^+(\mu))_N$, and that $\Omega\equiv\{x\in L_2(\mu):\;x_a\leq x\leq x_b\}$ is a closed convex set. Suppose $T:L_2(\mu)\rightarrow B(L_2(\mu))$ is a bounded linear operator, which satisfies that for each fixed $x\in L_2(\mu)$, $Tx$ is  $L_2(\mu)$-Lipschitz on $\Omega$ with respect to some $e\in (L_2(\mu))^+_N$ with $\| e\|=1$. We use $y\nleqslant_{L_2^+\setminus \{0\}}0$ to denote that $y\notin -L_2^+\setminus \{0\}$.
Then the variational inequality problem is to find a $\bar x\in \Omega$ such that
\begin{equation}\label{vector variational inequality}
\begin{split}
\left<T\bar x, x-\bar x \right>\nleqslant_{L_2^+\setminus \{0\}}0,\;{\rm for\;all}\; x\in\Omega,
\end{split}
\end{equation}
where $\left<T\bar x, x-\bar x \right>=(T\bar x)(x-\bar x).$

 Suppose $\bar x\in \Omega$ is a solution to the program (\ref{vector variational inequality}). Then $\bar x$ is a minimum of following program.
\begin{equation}\nonumber
\begin{split}
&\min\;\;\left<T \bar x, x-\bar x \right>\\
&s.t.\;x\in \Omega.
\end{split}
\end{equation}
Let $X\equiv L_2(\mu)=Y$.  Since $x_a,\;x_b,\;x_b-x_a\in (L_2(\mu)^+)_N$,  $\pi\equiv\frac{x_b}{\| x_b\|}\in(L_2(\mu)^+)_N$. Let $X_\pi$ be the $(X^+,\pi)$-generating space. Then $\Omega\subset X_\pi$, and
(by Example \ref{4.1}) $X_\pi=L_\infty(\mu)$. Since $e\in (L_2(\mu))^+_N\equiv Y^+$, the $(Y^+,e)$-generating space $Y_e$ is also $L_\infty(\mu)$.
Since $T$ is $L_2(\mu)$-Lipschitz on $\Omega$ with respect to $e\in (L_2(\mu))^+_N$, we obtain that $T(\Omega)\subset Y_e=L_\infty(\mu).$
 Therefore,  $X_\pi^*=(L_\infty(\mu))^*=Y_e^*$. Consequently,  $\bar x$ is a solution of the following scalarization program.
\begin{equation}\nonumber
\begin{split}
&\min\;\;\phi_{e,L_\infty(\mu)^+}(\left<T \bar x, x-\bar x \right>)\\
&s.t.\;x\in \Omega\subset L_\infty(\mu).
\end{split}
\end{equation}
By Theorems \ref{new fermat rule} and  \ref{subdifferential of composition},
\begin{equation}\nonumber
\begin{split}
\theta&\in\partial \phi_{e,L_\infty(\mu)^+}(\left<T\bar x, \cdot\right>)(\bar x)+N(\bar x,\Omega)\\
&=\bigcup\big\{y^*\circ T(\bar x):\;\left<y^*,\;e\right>=1,\;y^*\in L_\infty(\mu)^{*+}\big\}+N(\bar x,\Omega).
\end{split}
\end{equation}
This is a necessary optimality condition of the program (\ref{vector variational inequality}).\qed
\end{example}

\begin{acknowledgements}
The work was supported by National Natural Science Foundation of China, grant No. 11731010.
\end{acknowledgements}

%



\end{document}